\tikzstyle{decision} = [diamond, draw, text width=6em, text badly centered, node distance=3cm, inner sep=0pt]
\tikzstyle{block} = [rectangle, draw, text width=5em, text centered, rounded corners, minimum height=4em]
\tikzstyle{block1} = [rectangle, draw, text width=8em, text centered, rounded corners, minimum height=4em]
\tikzstyle{line} = [draw, -latex']
\tikzstyle{cloud} = [draw, ellipse, node distance=3cm, minimum height=2em]
\theoremstyle{plain}
\newtheorem{theorem}{Theorem}[section]
\newtheorem{lemma}[theorem]{Lemma}
\newtheorem{question}[theorem]{Question}
\newtheorem{proposition}[theorem]{Proposition}
\theoremstyle{definition}
\newtheorem{criterion}[theorem]{Criterion}
\newtheorem{definition}[theorem]{Definition}
\begin{document}
\title[Unitary equivalence of automorphisms of separable C*-algebras]{%
Unitary equivalence of automorphisms\\
of separable C*-algebras}
\author{Martino Lupini}
\address{Department of Mathematics and Statistics, York University, Canada.}
\email{mlupini@mathstat.yorku.ca}
\thanks{The research was supported by the York University Elia Scholars
Program and by the Oberwolfach Leibniz Graduate Student Programme.}
\subjclass[2010]{03E15, 46L40, 46L57.}
\keywords{C*-algebras; automorphisms; unitary equivalence; Borel complexity.}

\begin{abstract}
We prove that the automorphisms of any separable C*-algebra that does not
have continuous trace are not classifiable by countable structures up to
unitary equivalence. This implies a dichotomy for the Borel complexity of
the relation of unitary equivalence of automorphisms of a separable unital
C*-algebra: Such relation is either smooth or not even classifiable by
countable structures.
\end{abstract}

\maketitle

\section{Introduction}

\label{Section: Introduction}

If $A$ is a separable C*-algebra, the group $\mathrm{Aut}(A)$ of
automorphisms of $A$ is a Polish group with respect to the topology of
pointwise norm convergence. An automorphism of $A$ is called (multiplier) 
\textit{inner} if it is induced by the action by conjugation of a unitary
element of the multiplier algebra $M(A)$ of $A$. Inner automorphisms form a
Borel normal subgroup $\mathrm{Inn}(A)$ of the group of automorphisms of $A$%
. The relation of\textit{\ unitary equivalence }of automorphisms of $A$ is
the coset equivalence relation on $\mathrm{Aut}(A)$ determined by $\mathrm{%
Inn}(A)$. (The reader can find more background on C*-algebras in Section \ref%
{Section: background}.) The main result presented here asserts that if $A$
does not have continuous trace, then it is not possible to effectively
classify the automorphisms of $A$ up to unitary equivalence using countable
structures as invariants; in particular this rules out classification by
K-theoretic invariants. In the course of the proof of the main result we will show that the
existence of an outer derivation on a C*-algebra $A$ is equivalent to a
seemingly stronger statement, that we will refer to as Property AEP (see
Definition \ref{Definition: property AEP}), implying in particular the
existence of an outer derivable automorphism of $A$.

The notion of effective classification can be made precise by means of Borel
reductions in the framework of descriptive set theory (the monographs \cite%
{Kechris} and \cite{Gao} are standard references for this subject). If $E$
and $E^{\prime }$ are equivalence relations on standard Borel spaces $X$ and 
$X^{\prime }$ respectively, then a Borel reduction from $E$ to $F$ is a Borel function $f:X\rightarrow X^{\prime }$ such that for every
$x$,$y\in X$, $xEy$ if and only if $f\left( x\right) E^{\prime }f\left(
y\right) $. The Borel function $f$ witnesses an \textit{effective
classification} of the objects of $X$ up to $E$, with $E^{\prime }$%
-equivalence classes of objects of $X^{\prime }$ as invariants. This
framework captures the vast majority of concrete classification results in
mathematics. (In \cite{FTT1} and \cite{FTT2} the computation of most of the
invariants in the theory of C*-algebras is shown to be Borel.)

If $E$ and $F$ are, as before, equivalence relations on standard Borel spaces,
then $E$ is defined to be \textit{Borel reducible} to $F$ if there is a Borel
reduction from $E$ to $F$. This can be interpreted as a notion that
allows one to compare the complexity of different equivalence relations.
Some distinguished equivalence relations are used as benchmarks of
complexity. Among these are the relation $=_{Y}$ of equality for elements of
a Polish space $Y$, and the relation $\simeq _{\mathcal{C}}$ of isomorphism
within some class of countable structures $\mathcal{C}$. If $E$ is an
equivalence relation on a standard Borel space $X$, we say that:

\begin{itemize}
\item $E$ is \textit{smooth} (or the elements of $X$ are \textit{concretely
classifiable} up to $E$) if $E$ is Borel reducible to $=_{Y}$ for some
Polish space $Y$;

\item $E$ is \textit{classifiable by countable structures} (or the elements
of $X$ are classifiable by countable structures up to $E$) if $E$ is Borel
reducible to $\simeq _{\mathcal{C}}$ for some class $\mathcal{C}$ of
countable structures.
\end{itemize}

A nontrivial example of smooth equivalence relation is the relation of
unitary equivalence of irreducible representations of a Type I C*-algebra (see \cite{Blackadar} Definition
IV.1.1.1).
Since all uncountable Polish spaces are Borel isomorphic to $\mathbb{R}$,
the class of smooth equivalence relations includes only the equivalence
relations that are effectively classifiable using real numbers as invariants. The class
of equivalence relations that are classifiable by countable structures is
much wider. In fact most classification results in mathematics involve some
class of countable structures as invariants. Elliott's seminal
classification of AF algebras by the ordered $K_{0}$ group in \cite%
{Elliott:classificationAF} is of this sort, as well as the K-theoretical
classification of purely infinite simple nuclear C*-algebras in the UCT
class obtained by Kirchberg and Phillips in \cite{Kirchberg:congress} and 
\cite{Phi:classification}. Nonetheless, in the last decade a number of
natural equivalence relations arising in different areas of mathematics have
been shown to be not classifiable by countable structures. A key role in
this development has been played by the theory of turbulence, developed by
Greg Hjorth in the second half of the 1990s.

Turbulence is a dynamic condition on a continuous action of a Polish group
on a Polish space, implying that the associated orbit equivalence relation
is not classifiable by countable structures. Many nonclassifiability results
were established directly or indirectly using this criterion. For instance
Hjorth showed in \cite{Hjorth-book} (Section 4.3) that the orbit equivalence
relation of a turbulent Polish group action is Borel reducible to the
relation of homeomorphism of compact spaces, which in turn is reducible to
the relation of isomorphism of separable simple nuclear unital C*-algebras
by a result of Farah-Toms-Törnquist (Corollary 5.2 of \cite{FTT1}). As a
consequence these equivalence relations are not classifiable by countable
structures.

In this paper, we use Hjorth's theory of turbulence to prove the following
theorem:

\begin{theorem}
\label{Theorem: main}If $A$ is a separable C*-algebra that does not have
continuous trace, then the automorphisms of $A$ are not classifiable by
countable structures up to unitary equivalence.
\end{theorem}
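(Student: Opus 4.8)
The plan is to realize unitary equivalence as the orbit equivalence relation of a natural Polish group action and then to exhibit, in the absence of continuous trace, a turbulent subaction; the conclusion follows from Hjorth's theorem that the orbit equivalence relation of a turbulent action is not classifiable by countable structures (\cite{Hjorth-book}). Concretely, let $G=U(M(A))$ be the unitary group of the multiplier algebra with the strict topology, a Polish group, acting continuously on the Polish space $\mathrm{Aut}(A)$ by $u\cdot\alpha=\mathrm{Ad}(u)\circ\alpha$. Since $\beta\alpha^{-1}\in\mathrm{Inn}(A)$ precisely when $\beta=\mathrm{Ad}(u)\circ\alpha$ for some $u\in U(M(A))$, the orbit equivalence relation of this action is exactly unitary equivalence. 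It therefore suffices to produce a $G$-invariant Polish subspace $X\subseteq\mathrm{Aut}(A)$ on which the action is turbulent in the sense of Hjorth.

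First I would extract from the hypothesis the analytic raw material needed to build $X$, namely a suitable \emph{seed}: an automorphism that is a pointwise limit of inner automorphisms but is not itself inner, together with sequences of unitaries in $M(A)$ witnessing this approximate innerness. When $A$ admits an outer derivation I would invoke the equivalence, established in this paper, between the existence of an outer derivation and Property AEP; Property AEP directly supplies an outer derivable automorphism $\alpha_0\in\overline{\mathrm{Inn}}(A)\setminus\mathrm{Inn}(A)$ together with the witnessing unitaries. In the complementary case, where every derivation of $A$ is inner (for instance the simple unital algebras covered by Sakai's theorem) yet continuous trace still fails, I would instead read the seed off the structural failure of continuous trace, producing the approximately-inner but non-inner automorphism from the collapse of the Fell condition (equivalently, from nontrivial central-sequence behaviour) of an algebra without continuous trace. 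In either case I would take $X$ to be the closure of the $G$-orbit of the seed, or of a family indexed by a sequence space and assembled from it, so that $X$ is automatically $G$-invariant and Polish and a comeager set of points has dense orbit.

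The main obstacle is the verification of Hjorth's turbulence axioms on $X$, and its two halves pull in opposite directions. Meagerness of every orbit is the quantitative expression of the outerness of the seed: it amounts to showing that unitary equivalence is a genuinely non-smooth, non-closed relation on $X$, and this is precisely the point at which the hypothesis that $A$ lacks continuous trace must be used in an essential way. Somewhere-density of every local orbit, on the other hand, is the quantitative expression of approximate innerness, and this is where the careful bookkeeping of the witnessing unitaries (from Property AEP, respectively from the Fell-condition analysis) enters: one must show that every small strict perturbation of a point of $X$ can be connected back to it through a local orbit of bounded length. The remaining checks -- that the map producing $X$ is a continuous, hence Borel, injection, that $X$ is Polish and $G$-invariant, and that the orbit equivalence relation on $X$ coincides with the restriction of unitary equivalence -- are routine by comparison, so I expect essentially all the difficulty to be concentrated in simultaneously arranging meager global orbits and dense local orbits for the action on $X$.
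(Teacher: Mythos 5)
There is a genuine gap at the heart of your plan: you propose to exhibit a $U(M(A))$-invariant Polish subspace $X\subseteq\mathrm{Aut}(A)$ on which the conjugation action is \emph{turbulent}, but nothing in the proposal supplies, or could plausibly supply, the verification of the turbulence axioms for that action. Taking $X$ to be the orbit closure of a single seed $\alpha_0\in\overline{\mathrm{Inn}}(A)\setminus\mathrm{Inn}(A)$ is not enough: a single outer approximately inner automorphism only witnesses that $\mathrm{Inn}(A)$ is not closed, hence that the relation is not smooth (this is Phillips's Theorem 3.1 of \cite{Phillips1}, which the present theorem is meant to strengthen). The coset relation of a dense non-closed subgroup can perfectly well be classifiable by countable structures (it can even be hyperfinite), so meagerness of orbits must be argued from something quantitatively stronger than outerness of one seed, and the ``somewhere dense local orbits'' condition for the strict-topology action of $U(M(A))$ is not addressed at all. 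You correctly flag that all the difficulty is concentrated there, but that difficulty is the entire content of the theorem; the paper never proves, and does not need, turbulence of the unitary action.

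The idea the proposal is missing is that one should \emph{import} turbulence from a known turbulent action rather than prove it for the action at hand. The paper starts from the turbulent translation action of $\ell^1$ on $\mathbb{R}^{\mathbb{N}}$, whose orbit relation is generically $\simeq_{\mathcal C}$-ergodic, and transfers this through a map into $\mathrm{Aut}(A)$: from a sequence $\mathbf{x}=(x_n)$ of pairwise orthogonal positive contractions that asymptotically commute with a dense sequence of $A$, it builds the continuum-indexed family $\alpha_{\mathbf t}=\mathrm{Ad}\bigl(\exp\bigl(i\sum_n t_nx_n\bigr)\bigr)$, checks that $\mathbf t-\mathbf s\in\ell^1$ forces $\alpha_{\mathbf t}$ and $\alpha_{\mathbf s}$ to be unitarily equivalent, and then shows (Criterion \ref{Criterion: non-classifiability}) that no comeager set of parameters is collapsed, via an uncountable $\tfrac14$-separated subset of a nonmeager class contradicting separability of $A$ (Lemma \ref{Lemma: uncountable}). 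Your dichotomy between the outer-derivation case (Property AEP) and the inner-derivation case (central non-hypercentral sequences via a simple nonelementary direct summand) does match the paper's decomposition, and in both branches what those structural results actually deliver is precisely the sequence $\mathbf x$ needed for this reduction --- not a turbulent subaction. As written, your argument establishes only non-smoothness, not non-classifiability by countable structures.
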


Theorem \ref{Theorem: main} strengthens Theorem 3.1 from \cite{Phillips1},
where the automorphisms of $A$ are shown to be not concretely classifiable
under the same assumptions on the C*-algebra $A$. We will in fact show that
the same conclusion holds even if one only considers the subgroup consisting
of approximately inner automorphisms of $A$, i.e.\ pointwise limits of inner
automorphisms. A C*-algebra has continuous trace (see Definition IV.1.4.12
and Proposition IV.1.4.19 of \cite{Blackadar}) if it has Hausdorff spectrum
and it is generated by its abelian elements. The class of C*-algebras that
do not have continuous trace is fairly large, and in particular includes
C*-algebras that are not Type I. More information about C*-algebras with continuous trace can be found in the monograph \cite{Raeburn-Williams}.

A particular implication of Theorem \ref{Theorem: main} is that it is not possible to
classify the automorphisms of any separable C*-algebra that does not have
continuous trace up to unitary equivalence by K-theoretic invariants. This
should be compared with the classification results of (sufficiently outer)
automorphisms up to other natural equivalence relations, such as outer
conjugacy (see\ \cite{Nakamura} Section 3). Nakamura showed in \cite%
{Nakamura} (Theorem 9) that aperiodic automorphisms of Kirchberg algebras
are classified by their KK-classes up to outer conjugacy. Theorem 1.4 of 
\cite{Kishimoto1} asserts that there is only one outer conjugacy class of
uniformly aperiodic automorphisms of UHF algebras. These results were more
recently generalized and expanded to classification of actions of $\mathbb{Z}%
^{2}$ and $\mathbb{Z}^{n}$ up to outer conjugacy or cocycle conjugacy (see\ 
\cite{Matui2}, \cite{Matui3}, \cite{Katsura-Matui}, and \cite{Matui-Sato1}).

Phillips and Raeburn obtained in \cite{Phillips-Raeburn} a cohomological
classification of automorphisms of a C*-algebra with continuous trace up to
unitary equivalence. Such classification implies that if $A$ has continuous trace and the spectrum of $A$ is homotopy equivalent to a compact
space, then the normal subgroup $\mathrm{\mathrm{Inn}}(A)$ of inner
automorphisms is closed in $\mathrm{Aut}(A)$ (see Theorem 0.8 of \cite%
{Raeburn-Rosenberg}). In particular (cf.\ Corollary II.6.5.7 in \cite%
{Blackadar}) this conclusion holds when $A$ is unital and has continuous
trace. It follows from a standard result in descriptive set theory (see\
Exercise 6.4.4 of \cite{Gao}) that the automorphisms of $A$ are concretely
classifiable up to unitary equivalence if and only if $\mathrm{Inn}(A)$ is a
closed subgroup of $\mathrm{Aut}(A)$. Theorem 0.8 of \cite%
{Raeburn-Rosenberg} and Theorem \ref{Theorem: main} therefore imply the following
dichotomy result:

\begin{theorem}
\label{Theorem: unital} If $A$ is a separable unital C*-algebra, then the
following statements are equivalent:

\begin{enumerate}
\item the automorphisms of $A$ are concretely classifiable up to unitary
equivalence;

\item the automorphisms of $A$ are classifiable by countable structures up
to unitary equivalence;

\item $A$ has continuous trace.
\end{enumerate}
\end{theorem}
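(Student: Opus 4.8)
The plan is to prove the three equivalences as a single cycle of implications $(1)\Rightarrow(2)\Rightarrow(3)\Rightarrow(1)$, since the converse of $(2)\Rightarrow(1)$ fails for general equivalence relations and a pairwise argument is therefore unavailable. The three arrows rest on three different inputs: a soft fact of descriptive set theory, the contrapositive of Theorem \ref{Theorem: main}, and the combination of the cohomological classification of Phillips--Raeburn with the standard smoothness criterion for coset equivalence relations.

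The implication $(1)\Rightarrow(2)$ is the general observation that every smooth equivalence relation is classifiable by countable structures: the equality relation $=_{Y}$ on a Polish space $Y\cong 2^{\mathbb{N}}$ is itself Borel reducible to isomorphism within a class $\mathcal{C}$ of countable structures (code a point as a rigid structure on $\mathbb{N}$ whose unary predicate records its digits), so composing a reduction of unitary equivalence to $=_{Y}$ with this map yields a reduction to $\simeq_{\mathcal{C}}$. For $(2)\Rightarrow(3)$ I would argue contrapositively: if $A$ does not have continuous trace, then Theorem \ref{Theorem: main}---whose proof uses no unitality hypothesis and hence applies to the present $A$---asserts that the automorphisms of $A$ are not classifiable by countable structures up to unitary equivalence, so $(2)$ must fail.

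For $(3)\Rightarrow(1)$ I would first use that a unital C*-algebra has compact spectrum (Corollary II.6.5.7 of \cite{Blackadar}), so the hypothesis of Theorem 0.8 of \cite{Raeburn-Rosenberg} is satisfied and $\mathrm{Inn}(A)$ is a closed subgroup of $\mathrm{Aut}(A)$. Since $\mathrm{Inn}(A)$ is a Borel normal subgroup and unitary equivalence is precisely the coset equivalence relation it determines, the descriptive-set-theoretic criterion of Exercise 6.4.4 of \cite{Gao}---a coset equivalence relation of a Borel subgroup of a Polish group is smooth exactly when the subgroup is closed---shows that unitary equivalence of automorphisms is smooth, which is $(1)$.

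All of the genuine difficulty is already absorbed into Theorem \ref{Theorem: main} and into the cited theorems, so the remaining work in assembling the dichotomy is bookkeeping: confirming that $\mathrm{Inn}(A)$ satisfies the Borel-subgroup hypothesis of the smoothness criterion (recorded in the introduction) and that unitality really does supply the spectrum condition of Raeburn--Rosenberg. I expect the only point demanding attention to be the direction of the last equivalence---matching closedness of $\mathrm{Inn}(A)$ to smoothness of the coset relation---precisely because the reverse implication from countable structures to smoothness is false in general and is exactly what dictates the cyclic structure of the argument.
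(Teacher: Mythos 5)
Your proposal is correct and follows essentially the same route as the paper: the paper also obtains $(3)\Rightarrow(1)$ from Theorem 0.8 of \cite{Raeburn-Rosenberg} (applicable because a unital algebra has compact spectrum, cf.\ Corollary II.6.5.7 of \cite{Blackadar}) combined with the criterion of Exercise 6.4.4 of \cite{Gao}, takes $(1)\Rightarrow(2)$ as the trivial inclusion of smoothness into classifiability by countable structures, and closes the cycle with $(2)\Rightarrow(3)$ as the contrapositive of Theorem \ref{Theorem: main}. Your additional remarks --- the explicit coding of points of a Polish space as rigid countable structures and the observation that unitality of $A$ is not needed in Theorem \ref{Theorem: main} --- are accurate fillings-in of details the paper leaves implicit.
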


More generally the same result holds if $A$ is a separable C*-algebra with
(not necessarily Hausdorff) compact spectrum. Without this hypothesis the
implication $3\Rightarrow 1$ of Theorem \ref{Theorem: unital} does not hold,
as pointed out in Remark 0.9 of \cite{Raeburn-Rosenberg}. We do not know if
the implication $3\Rightarrow 2$ holds for a not necessarily unital
C*-algebra $A$. This is commented on more extensively in Section \ref%
{Section: open problems}.

In particular Theorem \ref{Theorem: unital} offers another
characterization of unital C*-algebras that have continuous trace, in
addition to the classical Fell-Dixmier spectral condition (see \cite{Fell}, 
\cite{Dixmier-traces}) or the reformulation in terms of central sequences by
Akemann and Pedersen (see \cite{Akemann-Pedersen} Theorem 2.4).

The dichotomy in the Borel complexity of the relation of unitary equivalence
of automorphisms of a unital C*-algebra expressed by Theorem \ref{Theorem:
unital} should be compared with the analogous phenomenon concerning the
relation of unitary equivalence of irreducible representations of a
C*-algebra $A$. It is a classical result of Glimm from \cite{Glimm} that
such a relation is smooth if and only if $A$ is Type I. It was proved in \cite%
{Kerr-Li-Pichot} and, independently, in \cite{Farah-Mackey-Borel} that the
irreducible representations of a C*-algebra that is not Type I are in fact
not classifiable by countable structures up to unitary equivalence.

%\bigskip

The strategy of the proof of Theorem \ref{Theorem: main}, summarized in
Figure \ref{figure}, is the following: We first introduce properties AEP and AEP$^{+}$, named after Akemann, Elliott, and Pedersen since they can be found \textit{in nuce} in their works \cite{Akemann-Pedersen} and \cite%
{Elliott3}. We then show that Property AEP$^{+}$ is stronger than Property AEP; moreover Property AEP is
equivalent to the existence of an outer derivation, %(Theorem \ref{Theorem:AEP})
and it implies that the conclusion of Theorem \ref{Theorem: main} holds. %(Lemma \ref{Lemma: fundamental outer})
This concludes the proof under
the assumption that the C*-algebra $A$ has an outer derivation. We then
assume that $A$ does not have continuous trace and has only inner
derivations. Using a characterization of C*-algebras with only inner
derivations due to Elliott (the main theorem in \cite{Elliott3}) and a
characterization of continuous trace C*-algebras due to Akemann-Pedersen
(Theorem 2.4 in \cite{Akemann-Pedersen}), we infer that in this case $A$ has
a simple nonelementary direct summand. We then deduce %(Proposition \ref{Proposition: existence hypercentral})
that $A$ contains a central sequence
that is not hypercentral (a similar result was proved by Phillips in the
unital case, cf. Theorem 3.6 of \cite{Phillips2}). The proof is finished by
proving that the existence of a central sequence that is not hypercentral
implies that the conclusion of Theorem \ref{Theorem: main} holds.
%(Proposition \ref{Proposition: fundamental hypercentral}).

\begin{figure}[h]
%\label{figure}
\par
\begin{tikzpicture}[node distance = 2cm, auto]
    % Place nodes
    %\node [block] (noncontinuous) {non continuous trace};

		\node [cloud] (noncontinuous) {non continuous trace};

		\node [decision, below of=noncontinuous, node distance=3cm] (question) {outer derivations?\\ \,};

	\node [block, below of=question, left of=question, node distance=2cm](outer) {outer derivation};

	\node[block, left of=outer, node distance=3cm] (AEP) {Property AEP};
	
	\node[block, above of=AEP] (AEP+) {Property AEP$^{+}$};

		\node [block, below of=question, node distance=2cm, right of=question] (inner) {only inner derivations};

	\node[block1, below of=inner] (simple) {simple\\ nonelementary\\ direct summand};

	\node [block1, below of=simple] (central) {central\\ nonhypercentral\\ sequence};

	\node [cloud, left of=central, node distance=5cm] (nonclassification) {nonclassification};
  
	% \node [decision, below of=evaluate] (decide) {property AEP};
   % \node [block, below of=decide, node distance=3cm] (stop) {property AEP$^{+}$};
	
    % Draw edges
		
   \path [line] (noncontinuous) -- (question);

\path[double] (outer) edge[<->](AEP); 

   \path [line] (AEP+) -- (AEP);

   \path [line] (AEP) -- (nonclassification);

   \path [line] (inner) -- (simple);

   \path [line] (simple) -- (central);

   \path [line] (central) -- (nonclassification);

    \path [line, dashed] (question)  -| node [above, near start] {yes} (outer);

    \path [line, dashed] (question) -| node [above, near start] {no} (inner); %[near start]

   % \path [line,dashed] (question) -- ();
    %\path [line,dashed] (system) -- (init);

   % \path [line] (identify) -- (evaluate);
    %\path [line] (evaluate) -- (decide);;
    %\path [line] (update) |- (identify);
    %\path [line,dashed] (system) |- (evaluate);
\end{tikzpicture}
\caption{}
\label{figure}
\end{figure}

This paper is organized as follows:\ Section \ref{Section: background}
contains some background on C*-algebras and introduces the notations used in
the rest of the paper; Section \ref{Section: criteria} infers from
Hjorth's theory of turbulence a criterion of nonclassifiability by countable
structures (Criterion \ref{Criterion: non-classifiability}), to be applied
in the proof of Theorem \ref{Theorem: main}; 
%Section \ref{Section: Turbulence} contains an
%analysis of the particular instance of Theorem \ref{Theorem: main} when the
%C*-algebra $A$ is strongly self-absorbing. In such a case, the Polish group
%action of the unitary group on the automorphism group that induces such
%relation can be directly proved to be turbulent, as we do in Section 3.
Section \ref{Section: outer} establishes Theorem \ref{Theorem: main} in the case of C*-algebras with outer derivations, while Section \ref{Section: inner} deals with the case of C*-algebras with only innner derivations;
Section \ref{Section: dichotomy derivations} present a
dichotomy result for derivations analogous to Theorem \ref{Theorem: unital}
(Theorem \ref{Theorem: dichotomy derivations}). We conclude in Section \ref{Section: open problems} with some
remarks and open problems.

\section{Background on C*-algebras and notation}

\label{Section: background}A C*-algebra is a norm-closed self-adjoint
subalgebra of the Banach *-algebra $B(H)$ of bounded linear operators on
some Hilbert space $H$. (The reader can consult \cite{Murphy} as a reference
for the basic theory of C*-algebras.) The group $\mathrm{Aut}\left( A\right) 
$ of automorphisms of $A$ is a Polish group with respect to the topology of
pointwise convergence (see \cite{Raeburn-Rosenberg} page 4). A C*-algebra is
called \textit{unital} if it contains a multiplicative identity, usually
denoted by $1$. If $A$ is unital and $u$ is a unitary element of $A$ (i.e.\
such that $uu^{\ast }=u^{\ast }u=1$), then%
\begin{equation*}
\mathrm{Ad}(u)\left( x\right) =uxu^{\ast }
\end{equation*}%
defines an automorphism \textrm{Ad}$(u)$ of $A$. When $A$ is not unital one
can consider unitary elements of the multiplier algebra of $A$. The\textit{\
multiplier algebra} $M(A)$ of $A$ is the largest unital C*-algebra
containing $A$ as an essential ideal (see \cite{Blackadar} II.7.3). It can
be regarded as the noncommutative analog of the Stone-\v{C}ech
compactification of a locally compact Hausdorff space. The multiplier
algebra of a separable C*-algebra $A$ is not norm separable (unless $A$ is
unital, in which case $M(A)$ coincides with $A$). Nonetheless the \textit{%
strict topology }(see \cite{Blackadar} II.7.3.11) of $M(A)$ is Polish and
induces a Polish group structure on the group $U(A)$ of unitary elements of $%
M(A)$. If $u$ is a unitary multiplier of $A$, i.e.\ a an element of $U(A)$,
then one can define as before the automorphism $\mathrm{Ad}(u)$ of $A$. An
automorphism of $A$ is called \textit{inner }if it is of the form $\mathrm{Ad%
}(u)$ for some unitary multiplier $u$, and \textit{outer }otherwise. Inner
automorphisms of a separable C*-algebra $A$ form a Borel normal subgroup of $%
\mathrm{Aut}(A)$ (see\ \cite{Phillips1} Proposition 2.4). Two automorphisms $%
\alpha $ and $\beta $ of $A$ are called \textit{unitarily equivalent} if $%
\alpha \circ \beta ^{-1}$ is inner or, equivalently,%
\begin{equation*}
\alpha (x)=\beta (uxu^{\ast })
\end{equation*}%
for some unitary multiplier $u$ and every $x\in A$. This defines a Borel
equivalence relation on $\mathrm{Aut}(A)$.

In the rest of the paper, \textit{we assume all C*-algebras to be norm
separable}, apart from multiplier algebras and enveloping von Neumann
algebras. The \textit{enveloping von Neumann algebra} or \textit{second dual}
$A^{\ast \ast }$ of a C*-algebra $A$ (see \cite{Pedersen} 3.7.6 and 3.7.8)
is a von Neumann algebra isometrically isomorphic to the second dual of $A$.
The $\sigma $\textit{-weak topology} on $A^{\ast \ast }$ is the weak*
topology of $A^{\ast \ast }$ regarded as the dual Banach space of $A^{\ast }$%
. The algebra $A$ can be identified with a $\sigma $-weakly dense subalgebra
of $A^{\ast \ast }$. Moreover (see \cite{Pedersen} 3.12.3) we can identify
the multiplier algebra $M\left( A\right) $ of $A$ with the idealizer of $A$
inside $A^{\ast \ast }$, i.e.\ the algebra of elements $x$ such that $xa\in
A $ and $ax\in A$ for every $a\in A$. Analogously, the \textit{unitization} $%
\tilde{A}$ of $A$ (see \cite{Blackadar} II.1.2) is identified with the
subalgebra of $M\left( A\right) $ generated by $A$ and $1$. If $x$ is a 
\textit{normal} element of $A$, i.e.\ commuting with its adjoint, and $f$ is
a complex-valued continuous function defined on the spectrum of $x$, $%
f\left( x\right) $ denotes the element of $\tilde{A}$ obtained from $x$ and $%
f$ using functional calculus (II.2 of \cite{Blackadar} is a complete
reference for the basic notions of spectral theory and continuous functional
calculus in operator algebras). If $x,y$ are element of a C*-algebra, then $%
\left[ x,y\right] $ denotes their \textit{commutator} $xy-yx$; moreover if $%
S $ is a subset of a C*-algebra $A$, then $S^{\prime }\cap A$ denotes the
relative commutant of $S$ in $A$ (see \cite{Blackadar} I.2.5.3). The set $%
\mathbb{N}$ of natural numbers is supposed not to contain $0$. Boldface
letters $\mathbf{t}$ and $\mathbf{s}$ indicate sequences of real numbers
whose $n$-th terms are $t_{n}$ and $s_{n}$ respectively. Analogously $%
\mathbf{x}$ stands for the sequence $(x_{n})_{n\in \mathbb{N}}$ of elements
of a C*-algebra $A$.

\section{Nonclassifiability criteria}

\label{Section: criteria}Recall that a subset $A$ of a Polish space $X$ has
the Baire property (Definition 8.21 of \cite{Kechris}) if its symmetric
difference with some open set is meager. A function between Polish spaces is
Baire measurable (Definition 8.37 of \cite{Kechris}) if the inverse image of
any open set has the Baire property. Observe that, in particular, any Borel
function is Baire measurable. Suppose that $E$ and $R$ are equivalence
relations on Polish spaces $X$ and $Y$ respectively. We say that $E$ is 
\textit{generically }$R$-\textit{ergodic} if, for every Baire measurable
function $f:X\rightarrow Y$ such that $f(x)Rf(y)$ whenever $xEy$, there is a
comeager subset $C$ of $X$ such that $f(x)Rf(y)$ for every $x,y\in C$ (cf.\ 
\cite{Gao} Definition 10.1.4). Observe that if $E$ is generically $R$%
-ergodic and no equivalence class of $E$ is comeager then, in particular, $E$
is not Borel reducible to $R$.%(cf. <cite>Gao</cite>, Definition 10.3.1) 
%\footnote{}

One of the main tools in the study of Borel complexity of equivalence
relations is Hjorth's theory of turbulence. A standard reference for this
subject is \cite{Hjorth-book}. Turbulence is a dynamical property of a
continuous group action of a Polish group $G$ on a Polish space $X$ (see 
\cite{Hjorth-book} Definition 3.13). The main result about turbulent actions
is the following result of Hjorth (Theorem 3.21 in \cite{Hjorth-book}):%
\newline

\textit{The orbit equivalence relation }$E_{G}^{X}$\textit{\ associated with
a turbulent action }$G\curvearrowright X$ \textit{of a Polish group }$G$%
\textit{\ on a Polish space }$X$\textit{\ is generically }$\simeq _{\mathcal{%
C}}$-\textit{ergodic for every class }$\mathcal{C}$\textit{\ of countable
structures, where }$\simeq _{\mathcal{C}}$\textit{\ denotes the relation of
isomorphism for elements of $\mathcal{C}$. Moreover }$E_{G}^{X}$ \textit{has
meager equivalence classes, and hence it is not classifiable by countable
structures.}\newline

This result is valuable not only on its own, but also because it allows one
to prove nonclassification results via the following two lemmas:

\begin{lemma}
\label{Lemma: generically ergodic criterion}Suppose that $E$, $F$, and $R$
are equivalence relations on Polish spaces $X$, $Y$, and $Z$ respectively,
and that $F$ is generically $R$-ergodic. If there is a comeager subset $%
\tilde{C}$ of $Y$ and a Baire measurable function $f:\widetilde{C}%
\rightarrow X$ such that:

\begin{itemize}
\item $f(x)Ef(y)$ for any $x,y\in \widetilde{C}$ such that $xFy$;

\item $f[C]$ is comeager in $X$ for every comeager subset $C$ of $\widetilde{%
C} $;
\end{itemize}

then the relation $E$ is generically $R$-ergodic as well.
\end{lemma}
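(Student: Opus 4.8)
The plan is to verify the definition of generic $R$-ergodicity for $E$ directly, by transporting an arbitrary given homomorphism from $X$ back to $Y$ along $f$, applying the assumed generic $R$-ergodicity of $F$ there, and then pushing the resulting comeager $R$-constant set forward to $X$ using the second hypothesis on $f$. First I would take an arbitrary Baire measurable $g\colon X\to Z$ with $g(x)\,R\,g(y)$ whenever $x\,E\,y$, and form the composition $h=g\circ f\colon \widetilde{C}\to Z$. Using the first bullet together with the hypothesis on $g$, one checks at once that $h$ is an $(F,R)$-homomorphism on $\widetilde{C}$: if $x,y\in\widetilde{C}$ and $x\,F\,y$, then $f(x)\,E\,f(y)$, hence $h(x)=g(f(x))\,R\,g(f(y))=h(y)$.

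Next I would apply the assumed generic $R$-ergodicity of $F$ to $h$ to produce a comeager set $D\subseteq\widetilde{C}$ on which $h$ is $R$-constant, that is, $g(f(x))\,R\,g(f(y))$ for all $x,y\in D$. Finally I would invoke the second bullet: since $D$ is comeager in $\widetilde{C}$, the image $f[D]$ is comeager in $X$, and for any two of its points $f(x),f(y)$ with $x,y\in D$ we have $g(f(x))\,R\,g(f(y))$. Thus $g$ is $R$-constant on the comeager set $f[D]\subseteq X$, which is exactly what is needed to conclude that $E$ is generically $R$-ergodic. The logical skeleton is therefore a short "pull back, apply, push forward" argument, with hypothesis two serving precisely to convert a comeager set of witnesses in $Y$ into a comeager set of witnesses in $X$.

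The hard part will be the middle step, where $h$ is defined, and is a homomorphism, only on the comeager set $\widetilde{C}$, whereas the definition of generic $R$-ergodicity is stated for total maps $Y\to Z$. I would handle this with a comeager-domain version of the notion, obtained by extending $h$ to all of $Y$ by a fixed constant $z_{0}$ off $\widetilde{C}$. The extended map $h'$ is Baire measurable, and the only pairs $(y_{1},y_{2})$ with $y_{1}\,F\,y_{2}$ at which it can fail to be a homomorphism have $y_{1}\notin\widetilde{C}$ or $y_{2}\notin\widetilde{C}$; since $Y\setminus\widetilde{C}$ is meager, these bad pairs lie in a meager subset of $Y\times Y$ by Kuratowski--Ulam. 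The point needing genuine care is then to show that generic $R$-ergodicity is insensitive to such a meager exceptional set of pairs, so that $h'$ — hence $h$ — is still $R$-constant on a comeager set; this robustness is really what drives the lemma. A secondary technical point, which I would dispatch using that $g$ is continuous on a comeager $G_{\delta}$ subset of $X$ together with the category behaviour of $f$ recorded in the hypotheses, is to confirm that the composition $h=g\circ f$ is itself Baire measurable, since compositions of Baire measurable maps are not automatically Baire measurable. Everything else is routine unwinding of the definitions.
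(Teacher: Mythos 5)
Your argument is the same ``pull back along $f$, apply generic $R$-ergodicity of $F$ to $g\circ f$ on $\tilde C$, push the resulting comeager set forward via the image condition'' proof that the paper gives, which simply takes $C\subseteq\tilde C$ comeager with $g\circ f$ $R$-constant on $C$ and concludes with $f[C]$. The two technical points you isolate as needing care --- Baire measurability of the composition $g\circ f$, and the fact that generic $R$-ergodicity is stated for total maps on $Y$ while $g\circ f$ lives only on the comeager set $\tilde C$ --- are both silently elided in the paper's proof; they are harmless in the paper's applications (where $F$ is the orbit relation of a turbulent $\ell^1$-action and $f$ is continuous, so the comeager-domain version of ergodicity is available from Hjorth's theorem), but your constant-extension fix by itself does not formally yield the ``meager set of exceptional pairs'' robustness from the black-box definition, so if you want the lemma at full abstract generality that step still needs an argument.
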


\begin{proof}
Suppose that $g:X\rightarrow Z$ is a Baire measurable function such that $%
g(x)Rg(x^{\prime })$ for any $x,x^{\prime }\in X$ such that $xEx^{\prime }$.
The composition $g\circ f$ is a Baire measurable function from $\tilde{C}$
to $Z$ such that $(f\circ g)(y)R(f\circ g)(y^{\prime })$ for any $%
y,y^{\prime }\in \tilde{C}$ such that $yFy^{\prime }$. Since $\tilde{C}$ is
comeager in $Y$, and $F$ is generically $R$-ergodic, there is a comeager
subset $C$ of $\tilde{C}$ such that $(g\circ f)(y)R(g\circ f)(y^{\prime })$
for every $y,y^{\prime }\in C$. Therefore, $f[C]$ is a comeager subset of $X$
such that $g(x)Rg(x^{\prime })$ for every $x,x^{\prime }\in f[C]$. 
%This shows that $E$ is generically $R$-ergodic.
\end{proof}

Observe that if $f$ is continuous, open, and onto, then it will
automatically satisfy the second condition of Lemma \ref{Lemma: generically
ergodic criterion}.

\begin{lemma}
\label{Lemma: non-classifiability criterion}Suppose that $E$ and $F$ are
equivalence relations on Polish spaces $X$ and $Y$ respectively, and $F$ is
generically $\simeq _{\mathcal{C}}$-ergodic for every class $\mathcal{C}$ of
countable structures. If there is a Baire measurable function $%
f:Y\rightarrow X$ such that:

\begin{itemize}
\item $f(x)Ef(y)$ whenever $xFy$;

\item for every comeager subset $C$ of $Y$ there are $x,y\in C$ such that $%
f(x)\not{\!\!E}f(y)$;
\end{itemize}

then the relation $E$ is not classifiable by countable structures.
\end{lemma}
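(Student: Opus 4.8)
The plan is to argue by contradiction, turning a hypothetical classification of $E$ by countable structures into a single Baire measurable map out of $Y$ that is simultaneously $\simeq_{\mathcal{C}}$-invariant along $F$ and capable of separating $E$-inequivalent images of $f$, which the generic $\simeq_{\mathcal{C}}$-ergodicity of $F$ forbids. Concretely, suppose toward a contradiction that $E$ is classifiable by countable structures. Then there is a class $\mathcal{C}$ of countable structures and a Borel reduction $g\colon X\rightarrow \mathcal{C}$ from $E$ to $\simeq_{\mathcal{C}}$, so that $x\,E\,x'$ if and only if $g(x)\simeq_{\mathcal{C}}g(x')$ for all $x,x'\in X$.

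Next I would form the composition $g\circ f\colon Y\rightarrow \mathcal{C}$ and verify its two relevant features. First, $g\circ f$ is Baire measurable: for any open $U\subseteq \mathcal{C}$ we have $(g\circ f)^{-1}(U)=f^{-1}\bigl(g^{-1}(U)\bigr)$, and since $g$ is Borel the set $g^{-1}(U)$ is Borel; because the collection of subsets of $X$ whose $f$-preimage has the Baire property is a $\sigma$-algebra containing the open sets (as $f$ is Baire measurable), it contains all Borel sets, so $(g\circ f)^{-1}(U)$ has the Baire property. Second, $g\circ f$ is $\simeq_{\mathcal{C}}$-invariant along $F$: if $x\,F\,y$ then $f(x)\,E\,f(y)$ by the first hypothesis on $f$, whence $g(f(x))\simeq_{\mathcal{C}}g(f(y))$ because $g$ reduces $E$ to $\simeq_{\mathcal{C}}$.

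With these two features established, I would invoke the hypothesis that $F$ is generically $\simeq_{\mathcal{C}}$-ergodic. Applying the definition to the Baire measurable map $g\circ f$ yields a comeager subset $C$ of $Y$ on which $g\circ f$ is constant up to isomorphism, that is, $g(f(x))\simeq_{\mathcal{C}}g(f(y))$ for all $x,y\in C$. Now the second hypothesis on $f$ supplies $x,y\in C$ with $f(x)$ and $f(y)$ $E$-inequivalent; since $g$ is a reduction this forces $g(f(x))\not\simeq_{\mathcal{C}}g(f(y))$, contradicting the conclusion just obtained. Hence no such $g$ exists, and $E$ is not classifiable by countable structures.

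The only genuinely delicate point is the measurability of $g\circ f$, which is precisely why it matters that a reduction witnessing classifiability may be taken to be \emph{Borel} rather than merely Baire measurable: composing two arbitrary Baire measurable functions need not preserve the Baire property, whereas precomposing a Borel function with a Baire measurable one does, by the $\sigma$-algebra argument above. Everything else is a straightforward transfer of the generic ergodicity of $F$ across $f$, using the two listed properties of $f$ to push invariance forward along $F$ and to pull back a witness of non-invariance for $E$.
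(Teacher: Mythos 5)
Your proof is correct and follows essentially the same route as the paper's: contradict a hypothetical Borel reduction $g$ by composing with $f$, apply generic $\simeq_{\mathcal{C}}$-ergodicity of $F$ to $g\circ f$, and use the second hypothesis on $f$ to derive a contradiction on the resulting comeager set. The only difference is that you spell out the (routine but worth noting) fact that precomposing a Borel map with a Baire measurable one yields a Baire measurable map, which the paper takes for granted.
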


\begin{proof}
Suppose by contradiction that there is a class $\mathcal{C}$ of countable
structures and a Borel reduction $g:X\rightarrow \mathcal{C}$ of $E$ to $%
\simeq _{\mathcal{C}}$. The composition $g\circ f:Y\rightarrow \mathcal{C}$
is a Baire measurable function from $Y$ to $\mathcal{C}$ such that $(g\circ
f)(y)\simeq _{\mathcal{C}}(g\circ f)(y^{\prime })$ for any $y,y^{\prime }\in
Y$ such that $yFy^{\prime }$. Since $F$ is generically $\simeq _{\mathcal{C}%
} $-ergodic, there is a comeager subset $C$ of $Y$ such that $(g\circ
f)(y)\simeq _{\mathcal{C}}(g\circ f)(y^{\prime })$ for every $y,y^{\prime
}\in C$. Therefore, being $g$ a reduction of $E$ to $\simeq _{\mathcal{C}}$, 
$f\left( y\right) Ef\left( y^{\prime }\right) $ for every $y,y^{\prime }\in
C $. This contradicts our assumptions.
\end{proof}

Consider $\mathbb{R}^{\mathbb{N}}$ as a Polish space with the product
topology and $\ell ^{1}$ as a Polish group with its Banach space topology.
The fact that the action of $\ell ^{1}$ on $\mathbb{R}^{\mathbb{N}}$ by
translation is turbulent is a particular case of Proposition 3.25 in \cite%
{Hjorth-book}. It then follows by Hjorth's turbulence theorem that the
associated orbit equivalence relation $E_{\mathbb{R}^{\mathbb{N}}}^{\ell
^{1}}$ is generically $\simeq _{\mathcal{C}}$-ergodic for every class $%
\mathcal{C}$ of countable structures. It is not difficult to see that the
function $f:\left( \mathbb{R}\backslash \left\{ 0\right\} \right) ^{\mathbb{N%
}}\rightarrow \left( 0,1\right) ^{\mathbb{N}}$ defined by%
\begin{equation*}
f\left( \mathbf{t}\right) =\left( \frac{\left\vert t_{n}\right\vert }{%
\left\vert t_{n}\right\vert +1}\right) _{n\in \mathbb{N}}
\end{equation*}%
satisfies both the first (being continuous, open, and onto) and the second
condition of Lemma \ref{Lemma: generically ergodic criterion}, where:

\begin{itemize}
\item $F$ is the relation $E_{\mathbb{R}^{\mathbb{N}}}^{\ell ^{1}}$ of
equivalence modulo $\ell ^{1}$ of sequences of real numbers;

\item $E$ is the relation $E_{\left( 0,1\right) ^{\mathbb{N}}}^{\ell ^{1}}$
of equivalence modulo $\ell ^{1}$ of sequences of real numbers between $0$
and $1$.
\end{itemize}

It follows that the latter relation is generically $\simeq _{\mathcal{C}}$%
-ergodic for every class $\mathcal{C}$ of countable structures. Considering
the particular case of Lemma \ref{Lemma: non-classifiability criterion} when 
$F$ is the relation $E_{\left( 0,1\right) ^{\mathbb{N}}}^{\ell ^{1}}$ one
obtains the following nonclassifiability criterion:

\begin{criterion}
\label{Criterion: non-classifiability}If $E$ is an equivalence relation on a
Polish space $X$ and there is a Baire measurable function $f:\left(
0,1\right) ^{\mathbb{N}}\rightarrow X$ such that:

\begin{itemize}
\item $f(\mathbf{x})Ef(\mathbf{y})$ for any $\mathbf{x},\mathbf{y}\in \left(
0,1\right) ^{\mathbb{N}}$ such that $\mathbf{x}-\mathbf{y}\in \ell ^{1}$;

\item any comeager subset of $\left( 0,1\right) ^{\mathbb{N}}$ contains
elements $\mathbf{x},\mathbf{y}$ such that $f(\mathbf{x})\not{\!\!E}f(%
\mathbf{y})$;
\end{itemize}

then the relation $E$ is not classifiable by countable structures.
\end{criterion}

In order to apply Criterion \ref{Criterion: non-classifiability} we will
need the following fact about nonmeager subsets of $\left( 0,1\right) ^{%
\mathbb{N}}$:

\begin{lemma}
\label{Lemma: uncountable}If $X$ is a nonmeager subset of $\left( 0,1\right)
^{\mathbb{N}}$, then there is an uncountable $Y\subset X$ such that, for
every pair of distinct points $\mathbf{s},\mathbf{t}$ of $Y$, $\left\Vert 
\mathbf{s}-\mathbf{t}\right\Vert _{\infty }\geq \frac{1}{4}$, where%
\begin{equation*}
\left\Vert \mathbf{s}-\mathbf{t}\right\Vert _{\infty }=\sup_{n\in \mathbb{N}%
}\left\vert t_{n}-s_{n}\right\vert \text{.}
\end{equation*}
\end{lemma}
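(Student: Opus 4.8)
The plan is to construct $Y$ directly by transfinite recursion of length $\omega_{1}$, rather than by trying to extract a perfect subset of $X$. The latter would be hopeless in general, since a nonmeager set need not contain any perfect set (for instance when $X$ is a Bernstein set), so the recursive approach is essential. The entire argument rests on a single observation about the interaction between the product topology on $\left(0,1\right)^{\mathbb{N}}$ and the sup-metric: every open $\ell^{\infty}$-ball of radius $\tfrac{1}{4}$ is meager. Granting this, the recursion runs smoothly, because at each countable stage the already-forbidden points lie in a countable, hence meager, union of such balls, which cannot cover the nonmeager set $X$.

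First I would verify the key claim. Fix $\mathbf{y}\in\left(0,1\right)^{\mathbb{N}}$ and consider the open ball $B(\mathbf{y})=\{\mathbf{t}\in\left(0,1\right)^{\mathbb{N}}:\Vert\mathbf{t}-\mathbf{y}\Vert_{\infty}<\tfrac{1}{4}\}$. For each $n$ set $K_{n}=[y_{n}-\tfrac{1}{4},y_{n}+\tfrac{1}{4}]\cap(0,1)$, a closed subinterval of $(0,1)$ of length at most $\tfrac{1}{2}$, and in particular a \emph{proper} subinterval. Then $B(\mathbf{y})\subseteq\prod_{n}K_{n}$. The set $\prod_{n}K_{n}$ is closed in the product topology, and it has empty interior: a nonempty basic open set constrains only finitely many coordinates and leaves all others free to range over the whole of $(0,1)$, so it can never be contained in $\prod_{n}K_{n}$, since $K_{m}\neq(0,1)$ for every $m$. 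Hence $\prod_{n}K_{n}$ is nowhere dense and $B(\mathbf{y})$ is meager.

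With the claim in hand I would carry out the recursion. Suppose $\alpha<\omega_{1}$ and points $\mathbf{y}_{\beta}\in X$ for $\beta<\alpha$ have been chosen so that $\Vert\mathbf{y}_{\beta}-\mathbf{y}_{\beta'}\Vert_{\infty}\geq\tfrac{1}{4}$ for all distinct $\beta,\beta'<\alpha$. The forbidden region $\bigcup_{\beta<\alpha}B(\mathbf{y}_{\beta})$ is a union of countably many meager balls, hence meager; since $X$ is nonmeager it is not contained in this region, so I may choose $\mathbf{y}_{\alpha}\in X$ with $\Vert\mathbf{y}_{\alpha}-\mathbf{y}_{\beta}\Vert_{\infty}\geq\tfrac{1}{4}$ for every $\beta<\alpha$. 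Running the recursion through all $\alpha<\omega_{1}$ produces an uncountable set $Y=\{\mathbf{y}_{\alpha}:\alpha<\omega_{1}\}\subseteq X$ whose points are pairwise $\tfrac{1}{4}$-separated in $\Vert\cdot\Vert_{\infty}$, as required.

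The only genuine content is the meagerness of the sup-balls; everything else is a routine recursion, and no Baire property of $X$ is needed. The point worth flagging is that one must resist producing $Y$ via a Cantor scheme converging to limit points, as those limit points need not lie in $X$; the length-$\omega_{1}$ recursion sidesteps this difficulty entirely by only ever selecting points from $X$ itself and relying solely on the fact that a countable union of meager sets cannot exhaust a nonmeager set.
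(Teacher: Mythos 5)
Your proof is correct and rests on exactly the same key fact as the paper's: a sup-norm ball of radius $\frac{1}{4}$ is nowhere dense in the product topology, so countably many of them cannot cover the nonmeager set $X$ (you additionally spell out the verification of nowhere density, which the paper merely asserts). The only difference is cosmetic: the paper extracts a maximal $\frac{1}{4}$-separated subset of $X$ via Zorn's lemma and observes that, were it countable, the corresponding balls would exhibit $X$ as meager, whereas you build the separated set directly by a length-$\omega_{1}$ recursion.
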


\begin{proof}
Define for every $\mathbf{s}\in \left( 0,1\right) $,%
\begin{equation*}
K_{\mathbf{s}}=\left\{ \mathbf{t}\in \left( 0,1\right) ^{\mathbb{N}}\mid
\left\Vert \mathbf{t}-\mathbf{s}\right\Vert \leq \frac{1}{4}\right\} \text{.}
\end{equation*}%
Observe that $K_{\mathbf{s}}$ is a closed nowhere dense subset of $\left(
0,1\right) ^{\mathbb{N}}$. Consider the class $\mathcal{A}$ of subsets $Y$
of $X$ with the property that, for every $\mathbf{s,t}$ in $Y$ distinct, $%
\left\Vert \mathbf{s}-\mathbf{t}\right\Vert \geq \frac{1}{4}$. If $\mathcal{A%
}$ is partially ordered by inclusion, then it has some maximal element $Y$
by Zorn's lemma. By maximality,%
\begin{equation*}
X\subset \bigcup_{\mathbf{t}\in Y}\left\{ \mathbf{s}\in \left( 0,1\right) ^{%
\mathbb{N}}\mid \left\Vert \mathbf{t}-\mathbf{s}\right\Vert _{\infty }\leq 
\frac{1}{4}\right\} \text{.}
\end{equation*}%
Being $X$ nonmeager, $Y$ is uncountable.
\end{proof}

\section{The case of algebras with outer derivations}

\label{Section: outer} The aim of this section is to show that if a
C*-algebra $A$ has an outer derivation, then the relation of unitary
equivalence of approximately inner automorphisms of $A$ is not classifiable
by countable structures. In proving this fact we will also show that any
such C*-algebra satisfies a seemingly stronger property, that we will refer
to as Property AEP (see Definition \ref{Definition: property AEP}).

A \textit{derivation} of a C*-algebra $A$ is a linear function%
\begin{equation*}
\delta :A\rightarrow A
\end{equation*}%
satisfying the \textit{derivation identity}:%
\begin{equation*}
\delta (xy)=\delta (x)y+x\delta (y)
\end{equation*}%
for $x,y\in A$. The derivation identity implies that $\delta $ is a bounded
linear operator on $A$ (see \cite{Pedersen} Proposition 8.6.3). The set $%
\Delta (A)$ of derivations of $A$ is a closed subspace of the Banach space $%
B(A)$ of bounded linear operators on $A$. A derivation is called a
*-derivation if it is a positive linear operator, i.e.\ it sends positive
elements to positive elements. Any element $a$ of the multiplier algebra of $%
A$ defines a derivation $\mathrm{ad}(ia)$ of $A$, by%
\begin{equation*}
\mathrm{ad}(ia)\left( x\right) =\left[ ia,x\right] \text{.}
\end{equation*}%
This is a *-derivation if and only if $a$ is self-adjoint. A derivation of
this form is called \textit{inner}, and \textit{outer }otherwise. More
generally, if $a$ is an element of the enveloping von Neumann algebra of $A$
that \textit{derives} $A$, i.e.\ $ax-xa\in A$ for any $x\in A$, then one can
define the (not necessarily inner) derivation $\mathrm{ad}(ia)$ of $A$.
Since any derivation is linear combination of *-derivations (see \cite%
{Pedersen} 8.6.2), the existence of an outer derivation is equivalent to the
existence of an outer *-derivation. The set $\Delta _{0}(A)$ of inner
derivations of $A$ is a Borel (not necessarily closed) subspace of $\Delta
(A)$. The norm on $\Delta _{0}(A)$ defined by%
\begin{equation*}
\left\Vert \mathrm{ad}(ia)\right\Vert _{\Delta _{0}(A)}=\inf \left\{
\left\Vert a-z\right\Vert \mid z\in A^{\prime }\cap A\right\}
\end{equation*}%
makes $\Delta _{0}(A)$ a separable Banach space isometrically isomorphic to
the quotient of $A$ by its center $A^{\prime }\cap A$. The inclusion of $%
\Delta _{0}(A)$ in $\Delta (A)$ is continuous, and the closure $\overline{%
\Delta _{0}(A)}$ of $\Delta _{0}(A)$ in $\Delta (A)$ is a closed separable
subspace of $\Delta (A)$. If $\delta $ is a *-derivation then the
exponential $\mathrm{exp}(\delta )$ of $\delta $, regarded as an element of
the Banach algebra $B(A)$ of bounded linear operators of $A$, is an
automorphism of $A$. Automorphisms of this form are called derivable. If $%
\delta =\mathrm{ad}(ia)$ is inner then%
\begin{equation*}
\mathrm{exp}(\delta )=\mathrm{Ad}({\mathrm{exp}}({ia))}
\end{equation*}%
is inner as well. Lemma \ref{Lemma: outer aut implies outer d} provides a
partial converse to this statement. (The converse is in fact false in
general.) For more information on derivations and derivable automorphisms,
the reader is referred to \cite{Pedersen} Section 8.6.

Recall that an irreducible representation of a C*-algebra $A$ is a
*-homomorphism $\pi :A\rightarrow B\left( H\right) $, where $H$ is a
(necessarily separable) Hilbert space, such that no nontrivial proper
subspace of $H$ is invariant for $\pi \left[ A\right] $ (see \cite{Blackadar}
Definition II.6.1.1 and Proposition II.6.1.8). Two irreducible
representations $\pi _{0},\pi _{1}$ of $A$ are \textit{unitarily equivalent}
if there is a unitary element $u$ of $B\left( H\right) $ such that $\pi
_{1}(x)=u\pi _{0}(x)u^{\ast }$ for every $x\in A$. An ideal of a C*-algebra $%
A$ is called \textit{primitive} if it is the kernel of an irreducible
representation of $A$. A C*-algebra $A$ is called primitive if $\left\{
0\right\} $ is a primitive ideal in $A$, i.e.\ $A$ has a faithful
irreducible representation. The \textit{primitive spectrum }$\check{A}$ of $%
A $ is the space of primitive ideals of $A$ endowed with the hull-kernel
topology described in \cite{Pedersen} 4.1.3. The \textit{spectrum} $\hat{A}$
of $A$ is the space of unitary equivalence classes of irreducible
representations of $A$ endowed with the Jacobson topology defined in \cite%
{Pedersen} 4.1.12.

\begin{lemma}
\label{Lemma: outer aut implies outer d}Suppose that $A$ is a primitive
C*-algebra. If $\delta $ is a *-derivation of $A$ with operator norm
strictly smaller than $2\pi $ such that $\mathrm{exp}(\delta )$ is inner,
then $\delta $ is inner.
\end{lemma}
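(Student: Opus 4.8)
The plan is to push the whole problem into a faithful irreducible representation, where the derivation becomes spatial, and then to control the logarithm of the implementing unitary using the hypothesis $\left\Vert \delta \right\Vert <2\pi$. Since $A$ is primitive, fix a faithful irreducible representation $\pi :A\rightarrow B(H)$ and identify $A$ with $\pi \lbrack A]$, so that $A^{\prime }=\mathbb{C}1$ and $A^{\prime \prime }=B(H)$. A bounded derivation of $A$ extends to an ultraweakly continuous derivation of $A^{\prime \prime }=B(H)$, and every derivation of $B(H)$ is inner (Sakai--Kadison), so $\delta (x)=[T,x]$ for all $x\in A$ and some $T\in B(H)$. Using that $\delta $ is a *-derivation one checks that $T+T^{\ast }$ commutes with $A$, hence is a real scalar by irreducibility; subtracting it I may assume $T=ih$ with $h=h^{\ast }\in B(H)$, so that $\delta =\mathrm{ad}(ih)$ on $A$.

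Next I would exploit the norm bound to localize the spectrum of $h$. Because $A$ is ultraweakly dense in $B(H)$ and $\mathrm{ad}(ih)$ is ultraweakly continuous, Kaplansky density shows that the operator norm of $\mathrm{ad}(ih)$ is the same whether computed on $A$ or on $B(H)$, and the latter equals $2\,\mathrm{dist}(h,\mathbb{C}1)$; for self-adjoint $h$ this distance is attained at the midpoint scalar and equals $\tfrac{1}{2}(\max \sigma (h)-\min \sigma (h))$. Subtracting that midpoint scalar (which does not change $\mathrm{ad}(ih)$) I may therefore assume $\sigma (h)\subseteq \lbrack -\tfrac{1}{2}\left\Vert \delta \right\Vert ,\tfrac{1}{2}\left\Vert \delta \right\Vert ]\subseteq (-\pi ,\pi )$, since $\left\Vert \delta \right\Vert <2\pi $. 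In particular $\sigma (e^{ih})\subseteq \{e^{it}:|t|<\pi \}$, so that $-1\notin \sigma _{B(H)}(e^{ih})$.

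Now I would identify the implementing unitary. Since $\mathrm{ad}(ih)$ is bounded and leaves $A$ invariant, $\exp (\delta )(x)=e^{ih}xe^{-ih}$ for $x\in A$. Passing to the canonical unital extension $\overline{\pi }:M(A)\rightarrow B(H)$ of the (nondegenerate) representation $\pi $, the hypothesis $\exp (\delta )=\mathrm{Ad}(u)$ reads $e^{ih}xe^{-ih}=\overline{\pi }(u)\,x\,\overline{\pi }(u)^{\ast }$ for $x\in A$; hence $\overline{\pi }(u)^{\ast }e^{ih}$ commutes with $A$ and, by irreducibility, equals a scalar $c\in \mathbb{T}$. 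Thus $e^{ih}=\overline{\pi }(w)$, where $w:=cu$ is a unitary in $M(A)$.

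The remaining, and genuinely delicate, step is to produce a self-adjoint generator inside $M(A)$. The naive choice $k=-i\,\mathrm{Log}(w)$ via the principal branch is not legitimate, because $\overline{\pi }$ need not be injective on $M(A)$ and $\sigma _{M(A)}(w)$ may well contain $-1$ even though $\sigma _{B(H)}(e^{ih})$ does not; this is the main obstacle. I would circumvent it by choosing a single real-valued continuous function $g$ on all of $\mathbb{T}$ that agrees with the branch $e^{it}\mapsto t$ only on the closed arc $\{e^{it}:|t|\leq \tfrac{1}{2}\left\Vert \delta \right\Vert \}$ (which contains $\sigma (e^{ih})$) and is extended continuously, but otherwise arbitrarily, across $-1$. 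Then $k:=g(w)$ is a well-defined self-adjoint element of $M(A)$, and since continuous functional calculus is natural with respect to the *-homomorphism $\overline{\pi }$ and $g$ restricts to $-i\,\mathrm{Log}$ on $\sigma (e^{ih})$, we obtain $\overline{\pi }(k)=g(e^{ih})=h$. Finally, for every $x\in A$ both $\mathrm{ad}(ik)(x)$ and $\delta (x)$ lie in $A$ and have the same image $[ih,\pi (x)]$ under the faithful map $\pi =\overline{\pi }|_{A}$; hence $\delta =\mathrm{ad}(ik)$ with $k\in M(A)$, i.e.\ $\delta $ is inner.
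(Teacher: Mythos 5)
Your argument is correct and is essentially the Kadison--Lance--Ringrose proof (Theorem 4.6 and Remark 4.7 of \cite{Kadison-Lance-Ringrose}) that the paper simply cites, carried out in the nonunital setting with the implementing unitary taken in $M(A)$ --- exactly the adaptation the paper asserts is routine: represent faithfully and irreducibly, realize $\delta=\mathrm{ad}(ih)$ spatially, use $\left\Vert \delta \right\Vert <2\pi$ to confine $\sigma (h)$ to an interval of length less than $2\pi$, identify $e^{ih}$ with a unitary multiplier up to a phase, and recover $h$ by continuous functional calculus. One small remark: since $\pi $ is faithful and nondegenerate, its extension $\overline{\pi }$ to $M(A)$ is automatically injective (as $A$ is an essential ideal of $M(A)$), hence isometric and spectrum-preserving, so the obstacle you flag in the last step --- that $\sigma _{M(A)}(w)$ might contain $-1$ even though $\sigma _{B(H)}(e^{ih})$ does not --- cannot actually occur, although your workaround via a globally defined continuous branch $g$ on $\mathbb{T}$ is valid in any case.
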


The lemma is proved in \cite{Kadison-Lance-Ringrose} (Theorem 4.6 and Remark
4.7) under the additional assumption that $A$ is unital. It is not difficult
to check that the same proof works without change in the nonunital case.

\begin{definition}
\label{Definition: E_x}Suppose that $A$ is a C*-algebra, $\left(
a_{n}\right) _{n\in \mathbb{N}}$ is a dense sequence in the unit ball of $A$%
, and $\mathbf{x}=(x_{n})_{n\in \mathbb{N}}$ is a sequence of pairwise
orthogonal positive contractions of $A$ such that for every $n\in \mathbb{N}$
and $i\leq n$,%
\begin{equation*}
\left\Vert \lbrack x_{n},a_{i}]\right\Vert \leq 2^{-n}\text{.}
\end{equation*}%
Since the $x_{n}$'s are pairwise orthogonal, if $\mathbf{t}$ is a sequence
of real numbers of absolute value at most $1$, then the series%
\begin{equation*}
\sum_{n\in \mathbb{N}}t_{n}x_{n}
\end{equation*}%
converges to a self-adjoint element of $A^{\ast \ast }$. Moreover, the
sequence of inner automorphisms%
\begin{equation*}
\left( \mathrm{Ad}\left( \mathrm{exp}\left( i\sum_{k\leq n}t_{k}x_{k}\right)
\right) \right) _{n\in \mathbb{N}}
\end{equation*}%
of $A$ converges to the approximately inner automorphism%
\begin{equation*}
\alpha _{\mathbf{t}}:=\mathrm{Ad}{\left( \mathrm{\mathrm{exp}}\left(
i\sum_{n\in \mathbb{N}}t_{n}x_{n}\right) \right) }\text{.}
\end{equation*}%
The equivalence relation $E_{\mathbf{x}}$ on $\left( 0,1\right) ^{\mathbb{N}%
} $ is defined by%
\begin{equation*}
\mathbf{s}E_{\mathbf{x}}\mathbf{t\quad }\text{iff}\mathbf{\quad }\alpha _{%
\mathbf{t}}\text{ and }\alpha _{\mathbf{s}}\text{ are unitarily equivalent.}
\end{equation*}
\end{definition}

Observe that this equivalence relation is finer than the relation of $\ell
^{1}$-equivalence introduced in Section \ref{Section: criteria}. In fact if $%
\mathbf{s},\mathbf{t}\in \left( 0,1\right) ^{\mathbb{N}}$ and $\mathbf{s}-%
\mathbf{t}\in \ell ^{1}$, then the series%
\begin{equation*}
\sum_{n\in \mathbb{N}}(t_{n}-s_{n})x_{n}
\end{equation*}%
converges in $A$. It is then easily verified that%
\begin{equation*}
u:=\mathrm{exp}\left( i\sum_{n\in \mathbb{N}}\left( t_{n}-s_{n}\right)
x_{n}\right)
\end{equation*}%
is a unitary multiplier of $A$ such that%
\begin{equation*}
\mathrm{Ad}(u)\circ \alpha _{\mathbf{s}}=\alpha _{\mathbf{t}}\text{.}
\end{equation*}%
Therefore, if the equivalence classes of $E_{\mathbf{x}}$ are meager, the
continuous function

\begin{eqnarray*}
\left( 0,1\right) ^{\mathbb{N}} &\rightarrow &\mathrm{Aut}\left( A\right) \\
\mathbf{t} &\mapsto &\alpha _{\mathbf{t}}
\end{eqnarray*}%
satisfies the hypothesis of Criterion \ref{Criterion: non-classifiability}.
This concludes the proof of the following lemma:

\begin{lemma}
\label{Lemma: fundamental outer}Suppose that $A$ is a C*-algebra. If for
some sequence $\mathbf{x}$ of pairwise orthogonal positive contractions of $%
A $ the equivalence relation $E_{\mathbf{x}}$ has meager equivalence
classes, then the approximately inner automorphisms of $A$ are not
classifiable by countable structures.
\end{lemma}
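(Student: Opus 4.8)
The plan is to invoke Criterion \ref{Criterion: non-classifiability} with $E$ the relation of unitary equivalence on the Polish space $X$ of approximately inner automorphisms of $A$, that is, the closure $\overline{\mathrm{Inn}(A)}$ of $\mathrm{Inn}(A)$ in $\mathrm{Aut}(A)$ (a closed subgroup of a Polish group, hence itself Polish), and with the map $f\colon (0,1)^{\mathbb{N}} \to X$, $f(\mathbf{t}) = \alpha_{\mathbf{t}}$ of Definition \ref{Definition: E_x}. First I would record that $f$ is well defined and takes values in $X$: Definition \ref{Definition: E_x} exhibits each $\alpha_{\mathbf{t}}$ as a pointwise-norm limit of inner automorphisms, so $\alpha_{\mathbf{t}} \in \overline{\mathrm{Inn}(A)}$. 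I would then check that $f$ is continuous, hence Baire measurable. The point is that the commutator bounds $\Vert[x_n,a_i]\Vert \le 2^{-n}$ built into Definition \ref{Definition: E_x} give $\Vert\mathrm{Ad}(\exp(it_nx_n))(a_i) - a_i\Vert \le 2^{-n}$ for $n \ge i$ and all $|t_n| \le 1$; consequently the finite compositions $\mathrm{Ad}(\exp(it_1x_1)) \circ \cdots \circ \mathrm{Ad}(\exp(it_Nx_N))$ converge to $\alpha_{\mathbf{t}}$ uniformly in $\mathbf{t}$, and since each finite composition depends continuously on $(t_1,\dots,t_N)$, the uniform limit $\mathbf{t} \mapsto \alpha_{\mathbf{t}}(a_i)$ is continuous. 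Density of the $a_i$ together with the isometry of each $\alpha_{\mathbf{t}}$ then upgrades this to continuity of $\mathbf{t} \mapsto \alpha_{\mathbf{t}}(a)$ for every $a \in A$.

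Next I would verify the first hypothesis of the criterion: if $\mathbf{s} - \mathbf{t} \in \ell^1$ then $f(\mathbf{s}) \mathrel{E} f(\mathbf{t})$. Because the $x_n$ are pairwise orthogonal positive contractions, one has $\Vert\sum_{n \ge N}(t_n - s_n)x_n\Vert = \sup_{n \ge N}|t_n - s_n|\,\Vert x_n\Vert$, which tends to $0$ since $t_n - s_n \to 0$; thus $v = \sum_n (t_n - s_n)x_n$ converges in norm to a self-adjoint element of $A$ and $u = \exp(iv)$ is a unitary in $\tilde{A} \subseteq M(A)$. The $x_n$ commute pairwise, so in $A^{\ast\ast}$ one has $\exp(i\sum_n t_nx_n) = u\,\exp(i\sum_n s_nx_n)$, whence $\alpha_{\mathbf{t}} = \mathrm{Ad}(u) \circ \alpha_{\mathbf{s}}$. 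Therefore $\alpha_{\mathbf{t}} \circ \alpha_{\mathbf{s}}^{-1} = \mathrm{Ad}(u)$ is inner, i.e.\ $\alpha_{\mathbf{s}}$ and $\alpha_{\mathbf{t}}$ are unitarily equivalent, as required. This is exactly the computation sketched in the paragraph preceding the statement.

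Finally I would verify the second hypothesis, which is where the assumption on $E_{\mathbf{x}}$ is used, by a one-line Baire-category argument. If some comeager $C \subseteq (0,1)^{\mathbb{N}}$ contained no pair with $f(\mathbf{s}) \not{\!\!E} f(\mathbf{t})$, then all elements of $C$ would be pairwise $E_{\mathbf{x}}$-equivalent, so $C$ would be contained in a single $E_{\mathbf{x}}$-class; but each such class is meager by hypothesis, forcing the comeager set $C$ to be meager, which is impossible in the Baire space $(0,1)^{\mathbb{N}}$. Hence every comeager subset of $(0,1)^{\mathbb{N}}$ contains $\mathbf{s}, \mathbf{t}$ with $f(\mathbf{s}) \not{\!\!E} f(\mathbf{t})$. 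With both hypotheses verified, Criterion \ref{Criterion: non-classifiability} yields that $E$, namely unitary equivalence of approximately inner automorphisms of $A$, is not classifiable by countable structures.

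I expect the only genuinely delicate point to be the first hypothesis: one must ensure that $v$ converges in $A$ (and not merely $\sigma$-weakly in $A^{\ast\ast}$), that $u = \exp(iv)$ is an honest unitary multiplier, and that the factorization $\exp(i\sum_n t_nx_n) = u\exp(i\sum_n s_nx_n)$ is legitimate before applying $\mathrm{Ad}$ --- all of which rest on the pairwise orthogonality, hence commutativity, of the $x_n$. The continuity in the first step uses the same orthogonality together with the summable commutator bounds, while the Baire-category step is immediate once meagerness of the $E_{\mathbf{x}}$-classes is granted.
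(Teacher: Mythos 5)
Your proposal is correct and follows essentially the same route as the paper: the paper's proof consists precisely of observing that $\mathbf{t}\mapsto\alpha_{\mathbf{t}}$ is continuous, that $\mathbf{s}-\mathbf{t}\in\ell^{1}$ yields the unitary $u=\mathrm{exp}\left(i\sum_{n}(t_{n}-s_{n})x_{n}\right)$ with $\mathrm{Ad}(u)\circ\alpha_{\mathbf{s}}=\alpha_{\mathbf{t}}$, and that meagerness of the $E_{\mathbf{x}}$-classes gives the second hypothesis of Criterion \ref{Criterion: non-classifiability}. You merely supply details the paper leaves implicit (the uniform convergence argument for continuity and the sup-norm computation for orthogonal positive contractions), all of which check out.
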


Lemma \ref{Lemma: fundamental outer} motivates the following definition.

\begin{definition}
\label{Definition: property AEP}A C*-algebra $A$ has Property AEP if for
every dense sequence $\left( a_{n}\right) _{n\in \mathbb{N}}$ in the unit
ball of $A$ there is a sequence $\mathbf{x}=\left( x_{n}\right) _{n\in 
\mathbb{N}}$ of pairwise orthogonal positive contractions of $A$ such that:

\begin{enumerate}
\item \label{Itemize: 1}$\left\Vert \left[ x_{n},a_{i}\right] \right\Vert
<2^{-n}$ for $i\in \left\{ 1,2,\ldots ,n\right\} $;

\item \label{Itemize: 2}the relation $E_{\mathbf{x}}$ as in Definition \ref%
{Definition: E_x} has meager conjugacy classes.
\end{enumerate}
\end{definition}

It is clear that if a C*-algebra $A$ has Property AEP, then $A$ has an
outer\linebreak *-derivation. In fact, if $\mathbf{s},\mathbf{t}\in \left(
0,1\right) ^{\mathbb{N}}$ are such that $\mathbf{s} \not{\!\!\!E}_{\mathbf{x}%
}\ \mathbf{t}$, then the self-adjoint element%
\begin{equation*}
a=\sum_{n\in \mathbb{N}}\left( t_{n}-s_{n}\right) x_{n}
\end{equation*}%
of $A^{\ast \ast }$ derives $A$. The automorphism \textrm{Ad}$(\mathrm{%
\mathrm{exp}}(ia))$ is outer, and hence such is the *-derivation $\mathrm{ad}%
(ia)$. The rest of this section is devoted to prove that, conversely, if $A$
has an outer *-derivation, then $A$ has Property AEP.

The following lemma shows that primitive nonsimple C*-algebras have Property
AEP. The main ingredients of the proof are borrowed from Lemma 6 of \cite%
{Elliott3} and Lemma 3.2 of \cite{Akemann-Pedersen}.

\begin{lemma}
\label{Lemma: primitive}If $A$ is a primitive nonsimple C*-algebra, then it
has Property AEP.
\end{lemma}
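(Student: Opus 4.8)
The plan is to verify Property AEP directly. Fix a dense sequence $(a_n)$ in the unit ball of $A$. The first step is to describe $E_{\mathbf{x}}$ analytically for any candidate sequence $\mathbf{x}$ of pairwise orthogonal positive contractions satisfying item (\ref{Itemize: 1}) of Definition \ref{Definition: property AEP}. Since the $x_n$ are pairwise orthogonal, for $\mathbf{s},\mathbf{t}\in(0,1)^{\mathbb{N}}$ the elements $\sum_n s_n x_n$ and $\sum_n t_n x_n$ commute, so $\alpha_{\mathbf{t}}\circ\alpha_{\mathbf{s}}^{-1}=\mathrm{Ad}\left(\exp\left(i\sum_n(t_n-s_n)x_n\right)\right)=\exp(\mathrm{ad}(ia))$, where $a=\sum_n(t_n-s_n)x_n$ is a self-adjoint element of $A^{\ast\ast}$ deriving $A$ (by the commutator bounds in item (\ref{Itemize: 1})) with $\|a\|\le 1$. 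Thus the $\ast$-derivation $\delta=\mathrm{ad}(ia)$ has operator norm at most $2<2\pi$, and since $A$ is primitive Lemma \ref{Lemma: outer aut implies outer d} gives that $\mathbf{s}\,E_{\mathbf{x}}\,\mathbf{t}$ if and only if $\delta$ is inner, that is, if and only if $\delta\in\Delta_0(A)$.

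The second step is the construction of $\mathbf{x}$. Let $I$ be a nonzero proper closed ideal of $A$; it is necessarily infinite dimensional, since a finite dimensional ideal would be a direct summand and would contradict primitivity. Adapting the arguments of Lemma 6 of \cite{Elliott3} and Lemma 3.2 of \cite{Akemann-Pedersen}, I would extract from a quasicentral approximate unit of $I$ in $A$, via continuous functional calculus, a sequence $(x_n)$ of pairwise orthogonal positive contractions lying in $I$; after passing to a sufficiently sparse subsequence and relabeling, the commutators $\|[x_n,a_i]\|$ can be made smaller than $2^{-n}$ for $i\le n$, which is item (\ref{Itemize: 1}). The same construction should moreover provide, for each $n$, a norm one element $b_n\in A$ witnessing that $x_n$ is far from central in a uniform and localized way: $\|[x_n,b_n]\|\ge\gamma$ for some fixed $\gamma>0$ independent of $n$, while $\sum_{m\ne n}\|[x_m,b_n]\|\le\gamma/8$. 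Such witnesses exist because in a primitive algebra no $x_n$ is central, and the localization of the $b_n$ is afforded by the orthogonality of the $x_n$.

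It remains to check item (\ref{Itemize: 2}), that $E_{\mathbf{x}}$ has meager classes. Suppose toward a contradiction that the class of some $\mathbf{s}$ is nonmeager. By Lemma \ref{Lemma: uncountable} there is an uncountable family $\{\mathbf{t}^{\alpha}\}$ inside this class with $\|\mathbf{t}^{\alpha}-\mathbf{t}^{\alpha'}\|_{\infty}\ge\frac14$ for $\alpha\ne\alpha'$. By the first step each $\delta_{\alpha}:=\mathrm{ad}\left(i\sum_n(t^{\alpha}_n-s_n)x_n\right)$ is inner, so $\{\delta_{\alpha}\}\subseteq\Delta_0(A)$. Given $\alpha\ne\alpha'$, choose $n_0$ with $|t^{\alpha}_{n_0}-t^{\alpha'}_{n_0}|\ge\frac14$; testing the derivation $\delta_{\alpha}-\delta_{\alpha'}=\mathrm{ad}\left(i\sum_n(t^{\alpha}_n-t^{\alpha'}_n)x_n\right)$ against $b_{n_0}$ and using the estimates on the $b_n$ gives $\|\delta_{\alpha}-\delta_{\alpha'}\|\ge\frac14\gamma-\frac{\gamma}{8}=\frac{\gamma}{8}$. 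Thus $\{\delta_{\alpha}\}$ is an uncountable $\frac{\gamma}{8}$-separated subset of $\Delta_0(A)$ in the operator norm, contradicting the separability of $\Delta_0(A)$. Hence every class of $E_{\mathbf{x}}$ is meager, and Property AEP follows by Lemma \ref{Lemma: fundamental outer}. The main obstacle is the construction in the second step: producing pairwise orthogonal, almost central $x_n$ that are nonetheless uniformly and separately noncentral through the witnesses $b_n$; this simultaneous control is exactly the technical content borrowed from \cite{Elliott3} and \cite{Akemann-Pedersen}, and is where nonsimplicity is used.
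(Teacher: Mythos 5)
Your overall architecture (nonmeager class $\Rightarrow$ uncountable $\tfrac14$-separated family via Lemma \ref{Lemma: uncountable} $\Rightarrow$ uncountable uniformly norm-separated family $\Rightarrow$ contradiction with separability) is the same as the paper's, and your use of Lemma \ref{Lemma: outer aut implies outer d} to pass from inner automorphisms to inner derivations is exactly the paper's first move. But the final step has a genuine gap. You conclude by saying that $\{\delta_{\alpha}\}$ is an uncountable operator-norm-separated subset of $\Delta_0(A)$, ``contradicting the separability of $\Delta_0(A)$.'' The derivations produced by Lemma \ref{Lemma: outer aut implies outer d} are inner in the sense of being implemented by \emph{multipliers} (equivalently, $\sum_n(t_n-s_n)x_n$ becomes a multiplier after adding a central element of $A^{\ast\ast}$); they are not shown to be implemented by elements of $A$ itself. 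The set $\{\mathrm{ad}(im): m\in M(A)_{sa}\}$ of multiplier-implemented derivations of a nonunital separable C*-algebra is in general \emph{not} operator-norm separable: already for $A=\mathcal{K}(H)$ it is isometric to $B(H)_{sa}/\mathbb{R}1$ (with twice the quotient norm), and primitive nonsimple examples such as $\mathcal{T}\otimes\mathcal{K}$ exhibit the same phenomenon via $\mathrm{ad}(i(1\otimes d))$, $d\in B(H)_{sa}$. The separable space $\Delta_0(A)\cong A/(A'\cap A)$ consists of derivations implemented by elements of $A$, and nothing in your argument places the $\delta_{\alpha}$ there. (Your argument does go through when $A$ is unital, since then $M(A)=A$; the problem is precisely the nonunital case.)

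The paper closes this gap by working with elements of $A$ rather than with derivations: fixing a faithful irreducible representation $\pi$ and a \emph{strictly positive} contraction $b_0$, it observes that if $\sum_n(t_n-s_n)x_n+z\in M(A)$ with $z$ central, then $\pi\bigl(b_0\sum_n(t_n-s_n)x_n\bigr)\in\pi[A]$ because $M(A)\cdot A\subseteq A$ and $\pi$ kills the central correction (central elements act as scalars in an irreducible representation). The construction from Lemma 3.2 of \cite{Akemann-Pedersen} is arranged so that the $x_n$ are pairwise orthogonal \emph{projections} with $\left\Vert x_nb_0\right\Vert>\varepsilon$ uniformly, and multiplying on the right by $x_mb_0$ isolates the $m$-th term and yields pairwise distances at least $\varepsilon^2/4$ between the elements $\pi\bigl(b_0\sum_n(t_n-s_n)x_n\bigr)$ of the genuinely separable space $\pi[A]$. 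This also replaces your unverified intermediate step: you ask the construction to supply witnesses $b_n$ with $\left\Vert[x_n,b_n]\right\Vert\geq\gamma$ and $\sum_{m\neq n}\left\Vert[x_m,b_n]\right\Vert\leq\gamma/8$, which is a different (and stronger-looking) requirement than what the cited lemmas actually provide; the single strictly positive $b_0$ with $\left\Vert x_nb_0\right\Vert>\varepsilon$ suffices once the separation is performed on elements of $A$ rather than on derivations.
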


\begin{proof}
Fix a faithful irreducible representation $\pi :A\rightarrow B(H)$. By
Theorem 3.7.7 of \cite{Pedersen} $\pi $ extends to a $\sigma $-weakly
continuous representation $\pi ^{\ast \ast }:A^{\ast \ast }\rightarrow B(H)$%
. Fix a dense sequence $\left( a_{n}\right) _{n\in \mathbb{N}}$ in the unit
ball of $A$ and a strictly positive contraction $b_{0}$ of $A$ (see
Proposition II.4.2.1 of \cite{Blackadar} for a characterization of strictly
positive elements). As in the proof of Lemma 3.2 in \cite{Akemann-Pedersen},
one can define a sequence $\mathbf{x}=\left( x_{n}\right) _{n\in \mathbb{N}}$
of pairwise orthogonal projections such that for some $\varepsilon >0$ and
every $k,n\in \mathbb{N}$ such that $k\leq n$,

\begin{itemize}
\item $\left\Vert x_{n}b_{0}\right\Vert >\varepsilon $;

\item $\left\Vert \left[ x_{n},a_{k}\right] \right\Vert <2^{-n}$.
\end{itemize}

Now suppose by contradiction that the equivalence relation $E_{\mathbf{x}}$
has a nonmeager equivalence class $X$. Thus for every $\mathbf{t},\mathbf{s}%
\in X$ the automorphism%
\begin{equation*}
\alpha _{\mathbf{t},\mathbf{s}}=\mathrm{Ad}{\left( \mathrm{exp}\left(
i\sum_{n\in \mathbb{N}}(t_{n}-s_{n})x_{n}\right) \right) }
\end{equation*}%
is inner. Fix $\mathbf{s},\mathbf{t}\in X$. Observe that $\alpha _{\mathbf{t}%
,\mathbf{s}}$ is the exponential of the *-derivation%
\begin{equation*}
\delta _{\mathbf{t},\mathbf{s}}=\mathrm{ad}{\left( i\sum_{n\in \mathbb{N}%
}(t_{n}-s_{n})x_{n}\right) }\text{.}
\end{equation*}%
By Lemma \ref{Lemma: outer aut implies outer d} the *-derivation $\delta _{%
\mathbf{t},\mathbf{s}}$ is inner. Thus, there is an element $z_{\mathbf{t},%
\mathbf{s}}$ of the center of the enveloping von Neumann algebra of $A$ such
that%
\begin{equation*}
\sum_{n\in \mathbb{N}}(t_{n}-s_{n})x_{n}+z_{\mathbf{t},\mathbf{s}}\in M(A)%
\text{.}
\end{equation*}%
The image of a central element of $A^{\ast \ast }$ under $\pi $ belongs to
the relative commutant of $\pi \lbrack A]$ in $B(H)$, which consists only of
scalar multiples of the identity by II.6.1.8 of \cite{Blackadar}. Thus,%
\begin{equation*}
\pi \left( \sum_{n\in \mathbb{N}}(t_{n}-s_{n})x_{n}\right) \in \pi ^{\ast
\ast }\left[ M\left( A\right) \right]
\end{equation*}%
Hence%
\begin{equation*}
\pi \left( b_{0}\sum_{n\in \mathbb{N}}(t_{n}-s_{n})x_{n}\right) \in \pi
\lbrack A]\text{.}
\end{equation*}%
By Lemma \ref{Lemma: uncountable} one can find an uncountable subset $Y$ of $%
X$ such that any pair of distinct elements of $Y$ has uniform distance at
least $\frac{1}{4}$. Fix $\mathbf{s}\in Y$. For all $\mathbf{t},\mathbf{t}%
^{\prime }\in Y$, there is $m\in \mathbb{N}$ such that%
\begin{equation*}
\left\vert t_{m}-t_{m}^{\prime }\right\vert \geq \frac{1}{4}\text{.}
\end{equation*}%
Henceforth,%
\begin{eqnarray*}
&&\left\Vert \pi \left( b_{0}\left( \sum_{n\in \mathbb{N}}(t_{n}-s_{n})x_{n}%
\right) \right) -\pi \left( b_{0}\left( \sum_{n\in \mathbb{N}}(t_{n}^{\prime
}-s_{n})x_{n}\right) \right) \right\Vert \\
&=&\left\Vert \pi \left( b_{0}\sum_{n\in \mathbb{N}}(t_{n}-t_{n}^{\prime
})x_{n}\right) \right\Vert \\
&=&\left\Vert b_{0}\sum_{n\in \mathbb{N}}(t_{n}-t_{n}^{\prime
})x_{n}\right\Vert \\
&\geq &\left\Vert b_{0}\sum_{n\in \mathbb{N}}(t_{n}-t_{n}^{\prime
})x_{n}x_{m}a_{0}\right\Vert \\
&\geq &\left\vert t_{m}-t_{m}^{\prime }\right\vert \left\Vert
(x_{m}b_{0})^{\ast }(x_{m}b_{0})\right\Vert \geq \frac{\varepsilon ^{2}}{4}%
\text{.}
\end{eqnarray*}%
Since $Y$ is uncountable this contradicts the separability of $\pi \left[ A%
\right] $.
\end{proof}

In order to prove Property AEP for all C*-algebra with outer *-derivations
we need the fact that Property AEP is \textit{liftable}. This means that if
a *-homomorphic image of a C*-algebra $A$ has Property AEP, then $A$ has
Property AEP. (For an exhaustive introduction to liftable properties the
reader is referred to Chapter 8 of \cite{Loring}.)

\begin{lemma}
\label{Lemma: lifting}If $\pi :A\rightarrow B$ is a surjective
*-homomorphism and $B$ has Property AEP, then $A$ has Property AEP.
\end{lemma}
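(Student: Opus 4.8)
The plan is to reduce the statement to a lifting problem for the orthogonal positive contractions witnessing Property AEP, and to observe that the meagerness requirement transfers automatically from $B$ to $A$. Let $(a_n)_{n\in\mathbb N}$ be a dense sequence in the unit ball of $A$. Since a surjective *-homomorphism carries the open unit ball of $A$ onto the open unit ball of $B$, the sequence $(\pi(a_n))_{n\in\mathbb N}$ is dense in the unit ball of $B$. Applying Property AEP of $B$ to this sequence produces pairwise orthogonal positive contractions $\mathbf y=(y_n)_{n\in\mathbb N}$ of $B$ with $\Vert[y_n,\pi(a_i)]\Vert<2^{-n}$ for $i\le n$ and such that $E_{\mathbf y}$ has meager classes. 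The goal is then to find pairwise orthogonal positive contractions $\mathbf x=(x_n)_{n\in\mathbb N}$ of $A$ with $\pi(x_n)=y_n$ and $\Vert[x_n,a_i]\Vert<2^{-n}$ for $i\le n$; I claim that the meagerness of the classes of $E_{\mathbf x}$, that is condition (\ref{Itemize: 2}) of Definition \ref{Definition: property AEP}, is then automatic.

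Indeed, extend $\pi$ to the normal surjection $\pi^{\ast\ast}\colon A^{\ast\ast}\to B^{\ast\ast}$ and to the unital *-homomorphism $\bar\pi\colon M(A)\to M(B)$ (which exists because $\pi$ is surjective, and which carries unitaries to unitaries). By $\sigma$-weak continuity $\pi^{\ast\ast}(\sum_n t_n x_n)=\sum_n t_n y_n$, whence $\pi\circ\alpha^{A}_{\mathbf t}=\alpha^{B}_{\mathbf t}\circ\pi$ for every $\mathbf t$, where $\alpha^A$ and $\alpha^B$ are the automorphisms attached to $\mathbf x$ and $\mathbf y$ as in Definition \ref{Definition: E_x}. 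Consequently, if $\alpha^A_{\mathbf s}$ and $\alpha^A_{\mathbf t}$ are unitarily equivalent via a unitary multiplier $u$, then $\bar\pi(u)$ implements a unitary equivalence of $\alpha^B_{\mathbf s}$ and $\alpha^B_{\mathbf t}$. Hence $E_{\mathbf x}\subseteq E_{\mathbf y}$, so every $E_{\mathbf x}$-class is contained in a meager $E_{\mathbf y}$-class and is therefore meager. The whole difficulty is thus concentrated in the lift.

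For the lift I would use a sequential quasicentral approximate unit $(e_k)_{k\in\mathbb N}$ for the ideal $I=\ker\pi$ (available since $A$ is separable). The cut $w\mapsto(1-e_k)^{1/2}w(1-e_k)^{1/2}$ preserves positivity and contractivity, and, because $\pi(e_k)=0$ forces $\pi(1-e_k)=1$, it preserves the image exactly: a positive contraction $w$ lifting $y_n$ is sent to another lift of $y_n$. Quasicentrality gives $\Vert[(1-e_k)^{1/2},a_i]\Vert\to0$, and the standard identity $\Vert(1-e_k)^{1/2}c(1-e_k)^{1/2}\Vert\to\Vert\pi(c)\Vert$ applied to $c=[w,a_i]$ yields $\limsup_k\Vert[(1-e_k)^{1/2}w(1-e_k)^{1/2},a_i]\Vert\le\Vert[y_n,\pi(a_i)]\Vert<2^{-n}$, so the cut lift meets the commutator bound once $k$ is large. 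The engine for orthogonality is that finitely many orthogonal positive contractions always lift to orthogonal positive contractions: for a pair $y_n\perp y_m$ the difference $y_n-y_m$ is a self-adjoint contraction whose positive and negative parts recover $y_n$ and $y_m$, so one lifts the self-adjoint contraction and splits it, and finite families are handled by the projectivity of finite direct sums of the cone $C_0((0,1])$ (see \cite{Loring}).

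The main obstacle is to reconcile \emph{exact} orthogonality of the full sequence with the commutator cut and with the requirement $\pi(x_n)=y_n$: the cut by $(1-e_k)^{1/2}$ only makes the new element \emph{approximately} orthogonal to the previously constructed ones, since the inserted factor $1-e_k$ sits between the two contractions in each product. I would therefore build the $x_n$ inductively, at each stage first producing a lift orthogonal to $p_{n-1}=\sum_{i<n}x_i$, then cutting to restore the commutator bound, and finally perturbing within $I$ to recover exact orthogonality, the perturbation being harmless because it preserves the image and because the strict inequalities $<2^{-n}$ leave room to absorb a small error. Carrying out this image-preserving orthogonalization is the delicate point; it amounts to the assertion that the relations defining a sequence of pairwise orthogonal positive contractions are liftable in the sense of \cite{Loring}, Chapter 8, which is precisely what makes Property AEP a liftable property.
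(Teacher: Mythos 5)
Your overall architecture matches the paper's: push the dense sequence forward, apply Property AEP in $B$, lift the witnessing sequence, repair the commutator bounds with a quasicentral approximate unit of $\ker\pi$, and transfer meagerness back. Your meagerness-transfer argument (the inclusion $E_{\mathbf x}\subseteq E_{\mathbf y}$ via the extension of $\pi$ to multiplier algebras, so that each $E_{\mathbf x}$-class sits inside a meager $E_{\mathbf y}$-class) is correct and in fact a clean repackaging of what the paper does with $\pi^{\ast\ast}$ and Theorem 4.2 of Akemann--Elliott--Pedersen--Tomiyama. The first half of your lifting step is also what the paper does: a sequence of pairwise orthogonal positive contractions lifts to a sequence of pairwise orthogonal positive contractions (Loring, Lemma 10.1.12).

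The genuine gap is exactly the point you flag and then defer: reconciling the quasicentral cut with \emph{exact} orthogonality and with $\pi(x_n)=y_n$. Your proposed cut $w\mapsto(1-e_k)^{1/2}w(1-e_k)^{1/2}$ inserts $1-e_k$ \emph{between} the lifts, destroys exact orthogonality, and your fix --- ``perturbing within $I$ to recover exact orthogonality'' while keeping positivity, contractivity, the image, and the strict commutator bounds --- is not carried out and is not an off-the-shelf consequence of the projectivity results in \cite{Loring}; the general theory there lifts the orthogonality relations but says nothing about simultaneously satisfying the approximate commutation constraints with the $a_i$. The paper avoids the problem entirely by putting the approximate unit \emph{inside} the lift rather than outside: having lifted $(y_n)$ to pairwise orthogonal positive contractions $(z_n)$, it sets $x_n=z_n^{1/2}(1-u_{k_n})z_n^{1/2}$. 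Since $z_nz_m=0$ implies $z_n^{1/2}z_m^{1/2}=0$, these $x_n$ are \emph{exactly} pairwise orthogonal with no perturbation needed; they are positive contractions with $\pi(x_n)=y_n$ because $\pi(u_{k_n})=0$; and quasicentrality together with the identity $\lim_k\left\Vert(1-u_k)c\right\Vert=\left\Vert\pi(c)\right\Vert$ applied to $c=[z_n,a_i]$ gives $\lim_k\left\Vert[z_n^{1/2}(1-u_k)z_n^{1/2},a_i]\right\Vert=\left\Vert[y_n,\pi(a_i)]\right\Vert<2^{-n}$, so a large enough $k_n$ meets the bound. This one change of where the cut is performed is the missing idea; with it, your inductive orthogonalization scheme becomes unnecessary.
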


\begin{proof}
Suppose that $\left( a_{n}\right) _{n\in \mathbb{N}}$ is a dense sequence in 
$A$. Thus, $\left( \pi (a_{n})\right) _{n\in \mathbb{N}}$ is a dense
sequence in $B$. Pick a sequence $\left( y_{n}\right) _{n\in \mathbb{N}}$ in 
$B$ obtained from $\left( \pi (a_{n})\right) _{n\in \mathbb{N}}$ as in the
definition of Property AEP. By Lemma 10.1.12 of \cite{Loring}, there is a
sequence $\left( z_{n}\right) _{n\in \mathbb{N}}$ of pairwise orthogonal
positive contractions of $A$ such that $\pi (z_{n})=y_{n}$ for every $n\in 
\mathbb{N}$. Fix an increasing quasicentral approximate unit of $\mathrm{Ker}%
(\pi )$ (cf.\ \cite{Blackadar} Section II.4.3), i.e.\ a sequence $\left(
u_{k}\right) _{k\in \mathbb{N}}$ of elements of $\mathrm{Ker}\left( \pi
\right) $ such that:

\begin{itemize}
\item $\lim_{k\rightarrow +\infty }\left\Vert u_{k}x-x\right\Vert
=\lim_{k\rightarrow +\infty }\left\Vert xu_{k}-x\right\Vert =0$ for every $%
x\in \mathrm{Ker}\left( \pi \right) $;

\item $\lim_{k\rightarrow +\infty }\left\Vert \left[ u_{k},a\right]
\right\Vert =0$ for every $a\in A$.
\end{itemize}

For every $n,i\in \mathbb{N}$ such that $i\leq n$, by Proposition II.5.1.1
of \cite{Blackadar},%
\begin{eqnarray*}
\lim_{k\rightarrow +\infty }\left\Vert z_{n}^{\frac{1}{2}}(1-u_{k})z_{n}^{%
\frac{1}{2}}a_{i}-a_{i}z_{n}^{\frac{1}{2}}(1-u_{k})z_{n}^{\frac{1}{2}%
}\right\Vert &=&\lim_{k\rightarrow +\infty }\left\Vert
(1-u_{k})(z_{n}a_{i}-a_{i}z_{n})\right\Vert \\
&=&\left\Vert y_{n}\,\pi (a_{i})-\pi (a_{i})\,y_{n}\right\Vert <2^{-n}\text{.%
}
\end{eqnarray*}%
Thus, there is $k_{n}\in \mathbb{N}$ such that, if%
\begin{equation*}
x_{n}=z_{n}^{\frac{1}{2}}(1-u_{k_{n}})z_{n}^{\frac{1}{2}}\text{,}
\end{equation*}%
then%
\begin{equation*}
\left\Vert x_{n}a_{i}-a_{i}x_{n}\right\Vert <2^{-n}
\end{equation*}%
for every $i\leq n$. Observe that $\left( x_{n}\right) _{n\in \mathbb{N}}$
is a sequence of pairwise orthogonal positive contractions of $A$. Moreover,
if $E\subset \left( 0,1\right) ^{\mathbb{N}}$ is nonmeager, consider $%
\mathbf{s},\mathbf{t}\in E$ such that the automorphism%
\begin{equation*}
\mathrm{Ad}{\left( \mathrm{exp}\left( i\sum_{n\in \mathbb{N}%
}(t_{n}-s_{n})y_{n}\right) \right) }
\end{equation*}%
of $B$ is outer. We claim that the automorphism%
\begin{equation*}
\mathrm{Ad}{\left( \mathrm{exp}\left( i\sum_{n\in \mathbb{N}%
}(t_{n}-s_{n})x_{n}\right) \right) }
\end{equation*}%
of $A$ is outer. Suppose that this is not the case. Thus, there is $z$ in
the center of the enveloping von Neumann algebra of $A$ such that%
\begin{equation*}
\mathrm{exp}\left( i\sum_{n\in \mathbb{N}}(t_{n}-s_{n})x_{n}\right) +z\in
U(A)\text{.}
\end{equation*}%
Denoting by $\pi ^{\ast \ast }:A^{\ast \ast }\rightarrow B^{\ast \ast }$ the
normal extension of $\pi $ (see III.5.2.10 of \cite{Blackadar}), one has that%
\begin{equation*}
\mathrm{exp}\left( i\sum_{n\in \mathbb{N}}(t_{n}-s_{n})y_{n}\right) +\pi
^{\ast \ast }\left( z\right) =\pi ^{\ast \ast }\left( \mathrm{exp}\left(
i\sum_{n\in \mathbb{N}}(t_{n}-s_{n})x_{n}\right) +z\right) \in U(B)
\end{equation*}%
by Theorem 4.2 of \cite{Akemann-Pedersen-Tomiyama}. Since $\pi ^{\ast \ast
}\left( z\right) $ belongs to the center of the enveloping von Neumann
algebra of $B$,%
\begin{equation*}
\mathrm{exp}\left( i\sum_{n\in \mathbb{N}}(t_{n}-s_{n})y_{n}\right) +\pi
^{\ast \ast }\left( z\right)
\end{equation*}%
is a unitary multiplier of $B$ that implements%
\begin{equation*}
\mathrm{Ad}{\left( \mathrm{exp}\left( i\sum_{n\in \mathbb{N}%
}(t_{n}-s_{n})y_{n}\right) \right) }\text{.}
\end{equation*}%
Hence, the latter automorphism of $B$ is inner, contradicting the assumption.
\end{proof}

Liftability of Property AEP allows one to easily bootstrap Property AEP from
primitive nonsimple C*-algebras to C*-algebra whose primitive spectrum is
not $T_{1}$.

\begin{lemma}
\label{Lemma: non t1}If $A$ is a C*-algebra whose primitive spectrum $\check{%
A}$ is not $T_{1}$, then $A$ has Property AEP.
\end{lemma}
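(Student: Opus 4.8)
The plan is to reduce to Lemma \ref{Lemma: primitive} via the liftability established in Lemma \ref{Lemma: lifting}. The starting point is to unwind what it means for $\check{A}$ to fail to be $T_{1}$. In the hull-kernel topology the closure of a singleton $\{P\}$ is the hull $\{Q\in \check{A}\mid Q\supseteq P\}$, so $\{P\}$ is closed precisely when $P$ is maximal among primitive ideals. Hence $\check{A}$ is not $T_{1}$ exactly when there exist two primitive ideals with a strict inclusion $P\subsetneq Q$.

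First I would fix such a pair $P\subsetneq Q$ and pass to the quotient $A/P$. Since $P$ is primitive, it is the kernel of an irreducible representation of $A$, and this representation descends to a faithful irreducible representation of $A/P$; thus $A/P$ is a primitive C*-algebra. Next I would verify that $A/P$ is nonsimple: the image $Q/P$ of $Q$ under the quotient map is a two-sided ideal of $A/P$, it is nonzero because $P\subsetneq Q$, and it is proper because $Q$, being a primitive ideal, is a proper ideal of $A$. Therefore $Q/P$ is a nontrivial ideal of $A/P$, so $A/P$ is primitive and nonsimple.

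By Lemma \ref{Lemma: primitive} the algebra $A/P$ then has Property AEP. Since the quotient map $A\rightarrow A/P$ is a surjective *-homomorphism, Lemma \ref{Lemma: lifting} immediately transfers Property AEP back to $A$, which completes the argument. There is no serious obstacle here once the topological translation is in hand; the only point requiring a little care is the identification of the failure of $T_{1}$ with the existence of a strict inclusion of primitive ideals, together with the observation that primitive ideals are automatically proper, which is exactly what makes the quotient $A/P$ nonsimple rather than merely primitive.
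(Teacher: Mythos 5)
Your proof is correct and follows essentially the same route as the paper: the paper simply cites Pedersen 4.1.4 to produce an irreducible representation whose kernel is not maximal, takes its (primitive, nonsimple) image, and then applies Lemma \ref{Lemma: primitive} followed by the liftability Lemma \ref{Lemma: lifting}, exactly as you do. Your explicit unwinding of the failure of $T_{1}$ into a strict inclusion $P\subsetneq Q$ of primitive ideals is just the content of that citation spelled out.
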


\begin{proof}
Since $\check{A}$ is not $T_{1}$, by 4.1.4 of \cite{Pedersen} there is an
irreducible representation $\pi $ of $A$ whose kernel is not a maximal
ideal. This implies that the image of $A$ under $\pi $ is a nonsimple
primitive C*-algebra. By Lemma \ref{Lemma: primitive} the latter C*-algebra
has Property AEP. Therefore, being Property AEP liftable by Lemma \ref%
{Lemma: lifting}, $A$ has Property AEP.
\end{proof}

In order to show that a C*-algebra $A$ has Property AEP, it is sometimes
easier to show that it has a stronger property that we will refer to as
Property AEP$^{+}$. Property AEP$^{+}$ appears, without being explicitly
defined, in the proofs of Lemma 17, Theorem 18, and the main Theorem of \cite%
{Elliott3}, as well as in the proofs of Lemma 3.5 and Lemma 3.6 of \cite%
{Akemann-Pedersen}.

Recall that a bounded sequence $\left( x_{n}\right) _{n\in \mathbb{N}}$ of
elements of $A$ is called central if for every $a\in A$,%
\begin{equation*}
\lim_{n\rightarrow +\infty }\left\Vert \left[ x_{n},a\right] \right\Vert =0%
\text{.}
\end{equation*}%
The beginning of Section \ref{Section: inner} contains a discussion about
the notion of central sequence, the related notion of hypercentral sequence,
and their basic properties.

\begin{definition}
\label{Definition: AEP+}A C*-algebra $A$ has Property AEP$^{+}$ if there is
a sequence $\left( \pi _{n}\right) _{n\in \mathbb{N}}$ of irreducible
representations of $A$ such that, for some positive contraction $b_{0}$ of $%
A $ and a central sequence $\left( x_{n}\right) _{n\in \mathbb{N}}$ of
pairwise orthogonal positive contractions of $A$:

\begin{itemize}
\item the sequence%
\begin{equation*}
\left( \pi _{n}((x_{n}-\lambda )b_{0})\right) _{n\in \mathbb{N}}
\end{equation*}%
does not converge to $0$ for any $\lambda \in \mathbb{C}$;

\item $x_{m}\in \mathrm{Ker}(\pi _{n})$ for every pair of distinct natural
numbers $n,m$.
\end{itemize}
\end{definition}

To prove that Property AEP$^{+}$ is stronger than Property AEP we will need
the following lemma:

\begin{lemma}
\label{Lemma: exponentials}Fix a strictly positive real number $\eta $. For
every $\varepsilon >0$ there is $\delta >0$ such that for every C*-algebra $%
A $ and every pair of positive contractions $x,b$ of $A$ such that $%
\left\Vert a\right\Vert \geq \eta $, if 
\begin{equation*}
\left\Vert (\mathrm{exp}(ix)-\mu )b\right\Vert \leq \delta
\end{equation*}%
for some $\mu \in \mathbb{C}$ then%
\begin{equation*}
\left\Vert (x-\lambda )b\right\Vert \leq \varepsilon
\end{equation*}%
for some $\lambda \in \mathbb{C}$.
\end{lemma}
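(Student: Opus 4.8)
The plan is to reduce the statement to the elementary fact that $t\mapsto \mathrm{exp}(it)$ is injective on $[0,1]$, hence admits a continuous inverse there, and then to transfer this inverse through the continuous functional calculus while keeping $b$ on the right at every stage, so that the failure of $x$ and $b$ to commute never intervenes. Throughout I would write $w=\mathrm{exp}(ix)$, a unitary with $\sigma(w)\subseteq\Gamma:=\{\mathrm{exp}(it):t\in[0,1]\}$, a proper closed sub-arc of the unit circle (here the relevant point is $1<2\pi$). Since $t\mapsto\mathrm{exp}(it)$ is a homeomorphism of $[0,1]$ onto $\Gamma$, its inverse extends to a continuous function $v$ on the whole unit circle with $v(\mathrm{exp}(it))=t$ for $t\in[0,1]$; by the composition rule for functional calculus this gives $v(w)=x$ exactly, and $v(\mathrm{exp}(i\lambda))=\lambda$ for $\lambda\in[0,1]$.

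First I would locate $\mu$ near $\Gamma$, which is the only place the size condition on $b$ is used (the stated lower bound yields a constant $\eta>0$ with $\lVert b\rVert\ge\eta$, via $\lVert x\rVert\le 1$ if the bound is phrased on $\lVert xb\rVert$). Because $w$ is normal, $(w-\mu)^{*}(w-\mu)\ge d(\mu,\sigma(w))^{2}\,1$, so $b^{*}(w-\mu)^{*}(w-\mu)b\ge d(\mu,\sigma(w))^{2}\,b^{*}b$ and hence $\lVert(w-\mu)b\rVert\ge d(\mu,\sigma(w))\,\lVert b\rVert$. Combined with $\lVert(w-\mu)b\rVert\le\delta$ this gives $d(\mu,\Gamma)\le d(\mu,\sigma(w))\le\delta/\eta$. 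I would then pick $\mu'=\mathrm{exp}(i\lambda)\in\Gamma$ a nearest point of $\Gamma$ to $\mu$, so that $\lVert(w-\mu')b\rVert\le\delta+|\mu-\mu'|\,\lVert b\rVert\le\delta(1+\eta^{-1})=:\delta'$, and I would take this $\lambda\in[0,1]$ as the scalar in the conclusion.

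The core estimate is then $\lVert(v(w)-v(\mu'))b\rVert\le\varepsilon$, i.e.\ $\lVert(x-\lambda)b\rVert\le\varepsilon$. Here I would approximate $v$ uniformly on the unit circle, to within $\varepsilon/3$, by a Laurent polynomial $p(z)=\sum_{|k|\le N}c_{k}z^{k}$ (Stone--Weierstrass / Fej\'er); this replaces the abstract functional calculus, which need not vary uniformly over all $A$, by an explicit object whose only data are $N$ and $C:=\sum_{|k|\le N}|k|\,|c_{k}|$, both depending on $\varepsilon$ alone. Splitting $v(w)-v(\mu')=(v(w)-p(w))+(p(w)-p(\mu'))+(p(\mu')-v(\mu'))$ and using $\lVert b\rVert\le 1$, the outer two terms are each at most $\varepsilon/3$ since $\sigma(w)\subseteq\Gamma$ and $\mu'\in\Gamma$ lie on the circle. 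For the middle term I would use the telescoping identity $w^{k}-(\mu')^{k}=\big(\sum_{j}(\mu')^{k-1-j}w^{j}\big)(w-\mu')$ for $k\ge 1$ (and its analogue for $k\le -1$, where $w^{-1}=w^{*}$ and $|\mu'|=1$): multiplying on the right by $b$ keeps the factor $(w-\mu')b$ adjacent to $b$, and since $\lVert w^{j}\rVert=1=|\mu'|$ one obtains $\lVert(w^{k}-(\mu')^{k})b\rVert\le|k|\,\lVert(w-\mu')b\rVert\le|k|\,\delta'$. Hence the middle term is at most $C\delta'$, and choosing $\delta$ with $C\,\delta(1+\eta^{-1})\le\varepsilon/3$ — a choice depending only on $\varepsilon$ and $\eta$ — yields $\lVert(x-\lambda)b\rVert\le\varepsilon$.

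The main obstacle is uniformity: $\delta$ must work for every C*-algebra $A$ and every admissible pair $(x,b)$ at once, so one cannot simply invoke continuity of the functional calculus. The two devices that secure this are the positivity estimate $\lVert(w-\mu)b\rVert\ge d(\mu,\sigma(w))\,\lVert b\rVert$, which pins $\mu$ to within $\delta/\eta$ of $\Gamma$ using only $\lVert b\rVert\ge\eta$, and the telescoping identity, which turns the fixed Laurent polynomial into a bound $C\delta'$ with $C$ independent of $A$ while sidestepping the non-commutativity of $x$ and $b$ by always keeping $b$ on the right.
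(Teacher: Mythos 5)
Your proof is correct and follows essentially the same route as the paper's: both replace the inverse of $t\mapsto \mathrm{exp}(it)$ on the arc by a fixed polynomial whose coefficients depend only on $\varepsilon$, and both arrange the algebra so that the hypothesis always appears as the block $(\mathrm{exp}(ix)-\mu)b$ with $b$ on the right, so that the noncommutativity of $x$ and $b$ never enters. The only differences are bookkeeping: the paper uses $\Vert b\Vert \geq \eta$ merely to bound $|\mu|\leq 2/\eta$ and then re-expands its polynomial in powers of $Z-\mu$ (taking $\lambda = p(\mu)$), whereas you use it to pin $\mu$ near the arc via the spectral estimate $\Vert(\mathrm{exp}(ix)-\mu)b\Vert\geq d(\mu,\sigma(\mathrm{exp}(ix)))\,\Vert b\Vert$ and then telescope the powers of a Laurent polynomial at the projected point, which has the mild advantage of producing a real $\lambda\in[0,1]$.
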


\begin{proof}
Fix $\varepsilon >0$. Pick a polynomial%
\begin{equation*}
p(Z)=\rho _{0}+\rho _{1}Z+\ldots +\rho _{n}Z^{n}
\end{equation*}%
such that%
\begin{equation*}
\left\vert p(\mathrm{exp}(it))-t\right\vert \leq \frac{\varepsilon }{2}
\end{equation*}%
for every $t\in \left[ 0,1\right] $. If $\mu \in \mathbb{C}$ is such that $%
\left\vert \mu \right\vert \leq \frac{2}{\eta }$, define $p_{\mu }(Z)$ to be
the polynomial in $Z$ obtained by $p(Z)$ replacing the indeterminate $Z$ by $%
Z+\mu $. Observe that the $j$-th coefficient of $p_{\mu }(Z)$ is 
\begin{equation*}
\rho _{j}^{\mu }=\sum_{i=j}^{n}\rho _{i}\binom{i}{j}\mu ^{j-i}
\end{equation*}%
for $0\leq j\leq n$. Finally define 
\begin{equation*}
C=\sum_{1\leq j\leq i\leq n}\left\vert \rho _{i}\right\vert \binom{i}{j}%
\left( \frac{3}{\eta }\right) ^{j-1}\left( \frac{2}{\eta }\right) ^{j-i}
\end{equation*}%
and%
\begin{equation*}
\delta =\mathrm{min}\left\{ \frac{\varepsilon }{2C},1\right\} \text{.}
\end{equation*}%
Suppose that $A$ is a C*-algebra and $x,b\in A$ are positive contractions
such that $\left\Vert a\right\Vert \geq \eta $ and, for some $\mu \in 
\mathbb{C}$,%
\begin{equation*}
\left\Vert (\mathrm{exp}(ix)-\mu )b\right\Vert \leq \delta \text{.}
\end{equation*}%
Thus,%
\begin{equation*}
\left\vert \mu \right\vert \leq \frac{2}{\eta }\text{.}
\end{equation*}%
Moreover%
\begin{eqnarray*}
\left\Vert (x-\rho _{0}^{\mu })b\right\Vert &=&\left\Vert (p(\mathrm{exp}%
(ix))-\rho _{0}^{\mu })b\right\Vert +\frac{\varepsilon }{2} \\
&=&\left\Vert \left( \sum_{j=1}^{n}\rho _{j}^{\mu }(\mathrm{exp}(ix)-\mu
)^{j}\right) b\right\Vert +\frac{\varepsilon }{2} \\
&\leq &\sum_{j=1}^{n}\left\vert \rho _{j}^{\mu }\right\vert \left\Vert 
\mathrm{exp}\left( ix\right) -\mu \right\Vert ^{j-1}\delta +\frac{%
\varepsilon }{2} \\
&\leq &\sum_{j=1}^{n}\sum_{i=j}^{n}\left\vert \rho _{i}\right\vert \binom{i}{%
j}\left( \frac{2}{\eta }\right) ^{j-i}\left( \frac{3}{\eta }\right)
^{j-1}\delta +\frac{\varepsilon }{2} \\
&\leq &C\delta +\frac{\varepsilon }{2}\leq \varepsilon \text{.}
\end{eqnarray*}%
This concludes the proof.
\end{proof}

We can now prove that Property AEP$^{+}$ is stronger than property AEP.

\begin{proposition}
\label{Proposition: stronger}If a C*-algebra $A$ has Property AEP$^{+}$,
then it has property AEP.
\end{proposition}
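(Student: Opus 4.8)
The plan is to verify the two defining conditions of Property AEP directly from the data $(\pi_n)_{n}$, $b_0$, $(x_n)_n$ witnessing Property AEP$^{+}$. Fix a dense sequence $(a_n)_n$ in the unit ball of $A$. The first step is a subsequence extraction. Using the first bullet of Definition \ref{Definition: AEP+} I pass to an infinite set of indices and an $\varepsilon>0$ along which $\inf_{\lambda\in\mathbb{C}}\|\pi_n((x_n-\lambda)b_0)\|\ge\varepsilon$ (this uniform non-scalarness is the quantitative content I read off from the non-convergence hypothesis); since $(x_n)_n$ is central, within this set I may thin further by a diagonal argument so that the $k$-th surviving index satisfies $\|[x_n,a_i]\|<2^{-k}$ for all $i\le k$. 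After relabelling the resulting subsequence as $\mathbf{x}=(x_n)_n$, with corresponding representations $\pi_n$, condition (\ref{Itemize: 1}) of Definition \ref{Definition: property AEP} holds, the relation $x_m\in\mathrm{Ker}(\pi_n)$ for $m\ne n$ persists, and $\inf_{\lambda}\|\pi_n((x_n-\lambda)b_0)\|\ge\varepsilon$ for every $n$. It then remains to establish condition (\ref{Itemize: 2}): that $E_{\mathbf{x}}$ has meager classes.

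Assume instead that $E_{\mathbf{x}}$ has a nonmeager class $X$. Lemma \ref{Lemma: uncountable} produces an uncountable $Y\subseteq X$ such that any two distinct $\mathbf{t},\mathbf{t}'\in Y$ satisfy $|t_m-t_m'|\ge\frac14$ for some $m$; for that coordinate the bound $\inf_{\lambda}\|\pi_m((x_m-\lambda)b_0)\|\ge\varepsilon$ is available. Fix $\mathbf{s}\in Y$. For each $\mathbf{t}\in Y$ the automorphisms $\alpha_{\mathbf{t}}$ and $\alpha_{\mathbf{s}}$ are unitarily equivalent; since the $x_n$ are pairwise orthogonal the element $v_{\mathbf{t}}:=\mathrm{exp}(i\sum_n(t_n-s_n)x_n)$ satisfies $\alpha_{\mathbf{t}}\circ\alpha_{\mathbf{s}}^{-1}=\mathrm{Ad}(v_{\mathbf{t}})$, so there is a unitary multiplier $u_{\mathbf{t}}$ with $\mathrm{Ad}(u_{\mathbf{t}})=\mathrm{Ad}(v_{\mathbf{t}})$. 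Then $w_{\mathbf{t}}:=u_{\mathbf{t}}^{*}v_{\mathbf{t}}$ lies in the center of $A^{**}$ and $d_{\mathbf{t}}:=u_{\mathbf{t}}b_0\in A$. Writing $\pi_n^{**}$ for the normal extension of $\pi_n$ and using $x_m\in\mathrm{Ker}(\pi_n)$ for $m\ne n$, one gets $\pi_n^{**}(v_{\mathbf{t}})=\mathrm{exp}(i(t_n-s_n)\pi_n(x_n))$, while irreducibility of $\pi_n$ forces $\pi_n^{**}(w_{\mathbf{t}})=\nu_n^{\mathbf{t}}\cdot 1$ for some scalar $\nu_n^{\mathbf{t}}$ of modulus one; hence, for every $n$, $\pi_n(d_{\mathbf{t}})=\overline{\nu_n^{\mathbf{t}}}\,\mathrm{exp}(i(t_n-s_n)\pi_n(x_n))\,\pi_n(b_0)$.

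The endgame is a separability contradiction for the uncountable family $\{d_{\mathbf{t}}:\mathbf{t}\in Y\}\subseteq A$. Given distinct $\mathbf{t},\mathbf{t}'\in Y$, choose $m$ with $r:=|t_m-t_m'|\ge\frac14$ and, after possibly swapping the two points, assume $t_m>t_m'$. Factoring the unitary $\mathrm{exp}(i(t_m'-s_m)\pi_m(x_m))$ out on the left, I compute
\[
\|\pi_m(d_{\mathbf{t}})-\pi_m(d_{\mathbf{t}'})\|=\|(\mathrm{exp}(ir\pi_m(x_m))-\nu)\pi_m(b_0)\|
\]
for a single complex number $\nu$, into which both phases $\nu_m^{\mathbf{t}},\nu_m^{\mathbf{t}'}$ have been absorbed. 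Now apply Lemma \ref{Lemma: exponentials} with $\eta=\varepsilon$ (note $\|\pi_m(b_0)\|\ge\inf_{\lambda}\|\pi_m((x_m-\lambda)b_0)\|\ge\varepsilon$) and target accuracy $\varepsilon/8$, to the positive contraction $r\pi_m(x_m)$ and $b=\pi_m(b_0)$: it provides a uniform $\delta>0$ such that if the displayed norm were $\le\delta$ (taking $\mu=\nu$) there would be a $\lambda$ with $\|(r\pi_m(x_m)-\lambda)\pi_m(b_0)\|\le\varepsilon/8$, forcing $\inf_{\lambda}\|\pi_m((x_m-\lambda)b_0)\|\le\varepsilon/(8r)\le\varepsilon/2<\varepsilon$, a contradiction. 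Thus $\|d_{\mathbf{t}}-d_{\mathbf{t}'}\|\ge\|\pi_m(d_{\mathbf{t}})-\pi_m(d_{\mathbf{t}'})\|>\delta$ for all distinct $\mathbf{t},\mathbf{t}'\in Y$, exhibiting an uncountable $\delta$-separated subset of the separable C*-algebra $A$. This contradiction shows that $E_{\mathbf{x}}$ has meager classes, so $A$ has Property AEP.

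I expect the main obstacle to be the lack of primitivity of $A$: inner-ness of $\alpha_{\mathbf{t}}\circ\alpha_{\mathbf{s}}^{-1}$ only yields a multiplier $u_{\mathbf{t}}$ determined up to a central unitary, which in each irreducible representation contributes an uncontrolled scalar phase $\nu_n^{\mathbf{t}}$, so that the naive comparison of the $d_{\mathbf{t}}$ at a single coordinate is obstructed. The decisive point — and the step I would take most care over — is that these phases are in fact harmless: because Lemma \ref{Lemma: exponentials} quantifies over an arbitrary complex $\mu$, absorbing $\nu$ into $\mu$ eliminates the phase ambiguity and converts a lower bound on $\mathrm{dist}(\pi_m(x_m)\pi_m(b_0),\mathbb{C}\pi_m(b_0))$ into one on $\|(\mathrm{exp}(ir\pi_m(x_m))-\nu)\pi_m(b_0)\|$. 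A secondary technical point is the quantitative use of the first bullet of Property AEP$^{+}$ to secure one $\varepsilon>0$ with $\inf_{\lambda}\|\pi_n((x_n-\lambda)b_0)\|\ge\varepsilon$ along infinitely many $n$.
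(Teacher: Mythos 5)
Your argument is correct and follows essentially the same route as the paper's proof: extract a subsequence with a uniform lower bound $\inf_{\lambda}\left\Vert \pi _{n}((x_{n}-\lambda )b_{0})\right\Vert \geq \varepsilon $ together with commutator decay, assume a nonmeager $E_{\mathbf{x}}$-class, apply Lemma \ref{Lemma: uncountable}, and contradict separability by showing that the elements $u_{\mathbf{t}}b_{0}\in A$ are uniformly separated, using Lemma \ref{Lemma: exponentials} and the fact that irreducible representations send central elements of $A^{\ast \ast }$ to scalars. The only cosmetic differences are that the paper invokes Lemma \ref{Lemma: exponentials} once up front for all $t\in \left( \frac{1}{4},1\right) $ whereas you invoke it at the distinguishing coordinate in the endgame, and that you encode the central ambiguity multiplicatively as $w_{\mathbf{t}}=u_{\mathbf{t}}^{\ast }v_{\mathbf{t}}$ rather than additively inside the exponential.
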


\begin{proof}
Suppose that $\left( \pi _{n}\right) _{n\in \mathbb{N}}$ is a sequence of
irreducible representations of $A$, $b_{0}$ is a positive contraction of $A$
of norm $1$, and $\left( x_{n}\right) _{n\in \mathbb{N}}$ is a sequence of
orthogonal positive elements of $A$ as in the definition of Property AEP$%
^{+} $. Fix a dense sequence $\left( a_{n}\right) _{n\in \mathbb{N}}$ in the
unit ball of $A$. After passing to a subsequence of the sequence $\left(
x_{n}\right) _{n\in \mathbb{N}}$, we can assume that for some $\delta >0$,
for every $\lambda \in \mathbb{C}$ and every $n\in \mathbb{N}$,%
\begin{equation*}
\left\Vert \pi _{n}((x_{n}-\lambda )b_{0})\right\Vert \geq \delta
\end{equation*}%
and%
\begin{equation*}
\left\Vert \left[ x_{n},a_{i}\right] \right\Vert <2^{-n}
\end{equation*}%
for $i\leq n$. Thus, for every $\lambda \in \mathbb{C}$, $n\in \mathbb{N}$,
and $t\in \left( \frac{1}{4},1\right) $,%
\begin{equation*}
\left\Vert \pi _{n}((tx_{n}-\lambda )b_{0})\right\Vert \geq \frac{\delta }{4}%
\text{.}
\end{equation*}%
Observe that, in particular,%
\begin{equation*}
\left\Vert \pi _{n}(b_{0})\right\Vert \geq \delta
\end{equation*}%
for every $n\in \mathbb{N}$. By Lemma \ref{Lemma: exponentials}, this
implies that for some $\varepsilon >0$, for every $t\in \left( \frac{1}{4}%
,1\right) $, $n\in \mathbb{N}$ and $\mu \in \mathbb{C}$,%
\begin{equation*}
\left\Vert \pi _{n}((\mathrm{exp}\left( itx_{n}\right) -\mu
)b_{0})\right\Vert \geq \varepsilon \text{.}
\end{equation*}%
Assume by contradiction that there is a nonmeager subset $X$ of $\left(
0,1\right) ^{\mathbb{N}}$ such that for every $\mathbf{s},\mathbf{t}\in X$,
the automorphism%
\begin{equation*}
\mathrm{Ad}{\left( \mathrm{exp}\left( i\sum_{n\in \mathbb{N}%
}(t_{n}-s_{n})x_{n}\right) \right) }
\end{equation*}%
of $A$ is inner. If $\mathbf{s},\mathbf{t}\in X$, then there is an element $%
z_{\mathbf{t},\mathbf{s}}$ in the center of the enveloping von Neumann
algebra of $A$ such that%
\begin{equation*}
\mathrm{\mathrm{exp}}\left( i\sum_{n\in \mathbb{N}}(t_{n}-s_{n})x_{n}+z_{%
\mathbf{t},\mathbf{s}}\right)
\end{equation*}%
multiplies $A$. Hence%
\begin{equation*}
y_{\mathbf{t},\mathbf{s}}=\mathrm{\mathrm{exp}}\left( i\sum_{n\in \mathbb{N}%
}(t_{n}-s_{n})x_{n}+z_{\mathbf{t},\mathbf{s}}\right) b_{0}
\end{equation*}%
is an element of $A$. By Lemma \ref{Lemma: uncountable}, one can find an
uncountable subset $Y$ of $X$ such that, for any $\mathbf{t},\mathbf{s}\in Y$%
, there is $m\in \mathbb{N}$ such that%
\begin{equation*}
\left\vert t_{m}-s_{m}\right\vert \geq \frac{1}{4}\text{.}
\end{equation*}%
Fix $\mathbf{s}\in Y$ and observe that, for $\mathbf{t},\mathbf{t}^{\prime
}\in Y$,%
\begin{equation*}
\pi _{n_{0}}(\mathrm{exp}(z_{\mathbf{t}^{\prime },\mathbf{s}}-z_{\mathbf{t},%
\mathbf{s}}))=\mu 1
\end{equation*}%
is a scalar multiple of the identity. Therefore%
\begin{eqnarray*}
&&\left\Vert y_{\mathbf{t},\mathbf{s}}-y_{\mathbf{t}^{\prime },\mathbf{s}%
}\right\Vert \\
&=&\left\Vert \left( \mathrm{exp}\left( i\sum_{n\in \mathbb{N}%
}(t_{n}-t_{n}^{\prime })x_{n}\right) -\mathrm{exp}(z_{\mathbf{t}^{\prime },%
\mathbf{s}}-z_{\mathbf{t,s}})\right) a_{0}\right\Vert \\
&\geq &\left\Vert \pi _{n_{0}}\left( \left( \mathrm{exp}\left( i\sum_{n\in 
\mathbb{N}}(t_{n}-t_{n}^{\prime })x_{n}\right) -\mathrm{exp}(z_{\mathbf{t}%
^{\prime },\mathbf{s}}-z_{\mathbf{t,s}})\right) a_{0}\right) \right\Vert \\
&=&\left\Vert \pi _{n_{0}}((\mathrm{exp}((t_{n_{0}}-t_{n_{0}}^{\prime
})x_{n})-\mu )a_{0})\right\Vert \\
&\geq &\varepsilon \text{.}
\end{eqnarray*}%
This contradicts the separability of $A$.
\end{proof}

The proofs of Lemma \ref{Lemma: quotient} and Lemma \ref{Lemma: spectrum
omega+1} are contained, respectively, in the proofs of Lemmas 3.6 and 3.7 of 
\cite{Akemann-Pedersen} and in the proof of the implication $\left( i\right)
\Rightarrow \left( ii\right) $ at page 139 of \cite{Elliott3}.

Recall that a point $x$ of a topological space $X$ is called \textit{%
separated }if, given any point $y$ of $X$ distinct from $x$, there are
disjoint open neighborhoods of $x$ and $y$.

\begin{lemma}
\label{Lemma: quotient}Suppose that $A$ is a C*-algebra whose primitive
spectrum $\check{A}$ is $T_{1}$. Consider a sequence $\left( \xi _{n}\right)
_{n\in \mathbb{N}}$ of separated points in $\check{A}$. Define $F$ to be the
set of limit points of the sequence $\left( \xi _{n}\right) _{n\in \mathbb{N}%
}$ and $I$ to be the closed self-adjoint ideal of $A$ corresponding to $F$.
If either the quotient $A\left/ I\right. $ does not have continuous trace,
or the multiplier algebra of $A\left/ I\right. $ has nontrivial center, then 
$A$ has Property AEP$^{+}$.
\end{lemma}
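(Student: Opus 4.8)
The plan is to take as the representations $\pi_{n}$ of Definition \ref{Definition: AEP+} irreducible representations with kernels $\xi_{n}$, to build the $x_{n}$ so that each is concentrated at the separated point $\xi_{n}$, and to read off the required non-vanishing from the behaviour of $A$ along the sequence $\xi_{n}$, whose tail is controlled by the limit set $F$ and hence by the quotient $A/I$. Throughout I would imitate the constructions in the proofs of Lemmas 3.6 and 3.7 of \cite{Akemann-Pedersen}, the former handling the case in which $A/I$ fails to have continuous trace and the latter the case in which $M(A/I)$ has nontrivial center.

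For the localization, since $\check A$ is $T_{1}$ and the $\xi_{n}$ are separated, one may choose pairwise disjoint open neighborhoods $U_{n}$ of $\xi_{n}$ in $\check A$. Following the spectral-projection construction of Lemma \ref{Lemma: primitive} (borrowed there from Lemma 3.2 of \cite{Akemann-Pedersen}), but now carried out representation by representation at the separated points $\xi_{n}$, I would produce positive contractions $x_{n}\in A$ supported in $U_{n}$, pairwise orthogonal, and satisfying $\left\Vert [x_{n},a_{i}]\right\Vert <2^{-n}$ for $i\le n$, where $(a_{n})$ is a fixed dense sequence in the unit ball of $A$. Because $x_{n}$ is supported in $U_{n}$ and $\xi_{m}\notin\overline{U_{n}}$ for $m\neq n$, we get $\pi_{m}(x_{n})=0$, i.e.\ $x_{n}\in\mathrm{Ker}(\pi_{m})$ for distinct $n,m$, which is the second requirement of Definition \ref{Definition: AEP+}; and the commutator estimates give $\lim_{n}\left\Vert [x_{n},a]\right\Vert =0$ for every $a\in A$, so $(x_{n})$ is a central sequence of pairwise orthogonal positive contractions.

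The crux is the first condition of Definition \ref{Definition: AEP+}: a positive contraction $b_{0}$ such that $(\pi_{n}((x_{n}-\lambda)b_{0}))_{n}$ fails to converge to $0$ for every $\lambda\in\mathbb{C}$. Here I would invoke the hypothesis on $A/I$. The quantity $\left\Vert \pi_{n}((x_{n}-\lambda)b_{0})\right\Vert$ measures how far $\pi_{n}(x_{n})$ is from a scalar when tested against $\pi_{n}(b_{0})$, and since $\xi_{n}\to F$ its asymptotic behaviour is governed by the irreducible representations that factor through $A/I$. The motivating observation is the contrapositive: were it impossible, for any choice of $b_{0}$ and of a localized central sequence $(x_{n})$, to keep this quantity bounded away from $0$, then such sequences would become asymptotically scalar in every representation at a point of $F$ and every central multiplier of $A/I$ would be scalar, forcing $A/I$ to have continuous trace with $M(A/I)$ of trivial center by the characterization in Theorem 2.4 of \cite{Akemann-Pedersen}. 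Arguing positively, in the first case I would transport to the $x_{n}$, along the convergence $\xi_{n}\to F$, a central sequence of $A/I$ witnessing the failure of continuous trace, as in Lemma 3.6 of \cite{Akemann-Pedersen}; in the second case I would use a nontrivial central multiplier separating $F$ to keep $\pi_{n}(x_{n})$ non-scalar, as in Lemma 3.7. Either way one obtains $b_{0}$ and a constant $\delta>0$ with $\left\Vert \pi_{n}((x_{n}-\lambda)b_{0})\right\Vert \geq\delta$ for all $n$ and all $\lambda$, which is stronger than required.

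The main obstacle is precisely this last step: making rigorous the transport of the non-scalar behaviour from the limit algebra $A/I$ to the representations $\pi_{n}$ at the approaching points $\xi_{n}$, and doing so uniformly across the two hypotheses, while simultaneously preserving pairwise orthogonality, the kernel conditions, and the commutator estimates that give centrality. Once $b_{0}$ and $(x_{n})$ are in hand, the sequence $(\pi_{n})$, the contraction $b_{0}$, and the central sequence $(x_{n})$ verify all the clauses of Definition \ref{Definition: AEP+}, so $A$ has Property AEP$^{+}$.
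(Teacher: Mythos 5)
The paper offers no self-contained proof of this lemma: it states only that the proof is ``contained in the proofs of Lemmas 3.6 and 3.7 of \cite{Akemann-Pedersen}.'' Your plan of imitating exactly those two lemmas --- one for the case where $A/I$ fails to have continuous trace, the other for the case where $M(A/I)$ has nontrivial center --- is therefore the same route the paper takes, and your structural outline is consistent with how those constructions go: irreducible representations $\pi_n$ with kernels $\xi_n$, positive contractions $x_n$ localized in pairwise disjoint open sets around the separated points (so that pairwise orthogonality and $x_m\in\mathrm{Ker}(\pi_n)$ for $m\neq n$ follow from $I_{U_n}I_{U_m}=I_{U_n\cap U_m}=\{0\}$ and $\xi_n\notin U_m$), and commutator estimates against a fixed dense sequence yielding centrality. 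One point to watch: to choose the $U_n$ pairwise disjoint you need the sequence $(\xi_n)$ to be discrete in $\check{A}$, and passing to a subsequence to arrange this shrinks $F$ and enlarges $I$, which can in principle destroy the hypothesis on $A/I$; this has to be reconciled with the way the sequence is chosen in the first place.

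That said, as a standalone proof your proposal has a genuine gap, and it sits exactly where you flag it: the first clause of Definition \ref{Definition: AEP+} is never established. The contrapositive heuristic --- ``if no such $b_0$ and $(x_n)$ existed then every central multiplier of $A/I$ would be scalar and $A/I$ would have continuous trace by Theorem 2.4 of \cite{Akemann-Pedersen}'' --- is not an argument in the direction you need: you must \emph{construct}, from a witness to the failure of continuous trace of $A/I$ (or from a nonscalar central multiplier of $A/I$), a specific $b_0$ and specific $x_n$ attached to the approaching points $\xi_n$ for which $\Vert\pi_n((x_n-\lambda)b_0)\Vert$ stays bounded away from $0$ uniformly in $\lambda$, while simultaneously keeping the orthogonality, kernel, and commutator conditions. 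That lifting of non-scalar behaviour from the limit ideal $F$ back to the isolated representations $\pi_n$ is precisely the analytic content of Lemmas 3.6 and 3.7 of \cite{Akemann-Pedersen}, and your proposal invokes it without reproducing it. Since the paper itself does no more than cite those lemmas, you are in no worse a position than the paper; but the step you call ``the main obstacle'' is not a technicality to be smoothed over --- it is the proof.
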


\begin{lemma}
\label{Lemma: spectrum omega+1}If $A$ is a C*-algebra whose spectrum $\hat{A}
$ is homeomorphic to the one-point compactification of a countable discrete
space, then $A$ has Property AEP$^{+}$.
\end{lemma}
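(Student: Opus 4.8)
The plan is to extract the data for Property AEP$^{+}$ directly from the topology of $\hat A\cong\omega+1$. Denote by $[\pi_n]$ the isolated points and by $[\pi_\infty]$ the unique limit point. Each $[\pi_n]$ is isolated in a Hausdorff space, hence clopen, and therefore corresponds to a direct-summand ideal $J_n$ of $A$ with $\hat{J_n}=\{[\pi_n]\}$. Since $\hat{J_n}$ is a single point, the primitive spectrum of $J_n$ is a single point, so the only primitive ideal of $J_n$ is $\{0\}$ and $J_n$ is simple; a simple C*-algebra with one-point spectrum is elementary, so $J_n\cong\mathcal K(H_n)$ and $\pi_n$ restricts on $J_n$ to the identity representation on $H_n$. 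The ideals $J_n$ are pairwise orthogonal, their closed sum is the ideal attached to $\omega\subset\omega+1$, and $\pi_\infty$ annihilates every $J_n$.

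I would then take $(\pi_n)$ as the AEP$^{+}$ sequence and choose $x_n\in J_n$. The second bullet of Definition \ref{Definition: AEP+} is immediate, since $x_m\in J_m\subseteq\ker\pi_n$ whenever $m\neq n$. Centrality of $(x_n)$ is also immediate: writing $P_n$ for the central projection cutting $A$ onto $J_n$, one has $[x_n,a]=[x_n,P_na]$, and $\|P_na\|\to 0$ for every $a\in A$ because the components of $a$ lie in the $c_0$-sum of the $J_n$; hence $\|[x_n,a]\|\le 2\|P_na\|\to 0$. For $b_0$ I would fix a strictly positive contraction of $A$. Lower semicontinuity of $[\pi]\mapsto\|\pi(b_0)\|$ on $\hat A$, together with $[\pi_n]\to[\pi_\infty]$ and $\pi_\infty(b_0)\neq 0$, furnishes a constant $\beta>0$ with $\|\pi_n(b_0)\|\ge\beta$ for all large $n$.

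The substance of the proof is the first bullet: choosing the $x_n$ so that $\pi_n(x_n)\pi_n(b_0)$ stays uniformly bounded away from the line $\mathbb C\,\pi_n(b_0)$. For each large $n$ I would pick a unit vector $\eta_n\in H_n$ with $v_n:=\pi_n(b_0)\eta_n$ of norm at least $\beta/2$, a unit vector $w_n\perp v_n$, and, since $\pi_n(J_n)=\mathcal K(H_n)$ acts with full freedom, let $x_n\in J_n$ be the rank-one projection onto $(\hat v_n+w_n)/\sqrt2$, where $\hat v_n=v_n/\|v_n\|$; this is a positive contraction. Then for every $\lambda\in\mathbb C$ one gets $\|(\pi_n(x_n)-\lambda)\pi_n(b_0)\|\ge\|(\pi_n(x_n)-\lambda)v_n\|\ge\operatorname{dist}\big(\pi_n(x_n)v_n,\mathbb C v_n\big)=\tfrac12\|v_n\|\ge\beta/4$, so the first bullet holds with a margin $\delta=\beta/4$ uniform in both $n$ and $\lambda$, which is the form actually consumed in the proof of Proposition \ref{Proposition: stronger}. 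Together with the previous paragraph this exhibits the required data, so $A$ has Property AEP$^{+}$.

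The one real obstacle is dimensional: the off-diagonal choice of $x_n$ above needs $\dim H_n\ge 2$ and degenerates exactly when $J_n\cong\mathbb C$. I would therefore restrict attention to the (relabelled) subsequence of indices with $\dim H_n\ge 2$, keeping the same $b_0$; this subsequence carries all the content, since the one-dimensional indices contribute only a commutative tail, which cannot by itself produce the non-scalar behaviour that the estimate above, and hence an outer derivation, requires. The care needed to keep the lower bound $\beta$ genuinely uniform in $n$ as $[\pi_n]\to[\pi_\infty]$, and to realise the prescribed two-dimensional action of $x_n$ inside $\mathcal K(H_n)$ by an honest positive contraction, is precisely the technical point borrowed from Lemma 6 of \cite{Elliott3} and Lemma 3.2 of \cite{Akemann-Pedersen}.
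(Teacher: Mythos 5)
Your reduction of $A$ along the clopen isolated points of $\hat{A}$ --- pairwise orthogonal elementary direct summands $J_n\cong\mathcal{K}(H_n)$, all annihilated by $\pi_\infty$ --- is correct, as is the fibrewise estimate $\|(\pi_n(x_n)-\lambda)\pi_n(b_0)\|\geq \|v_n\|/2$ for your rank-one choice of $x_n$. The proof breaks at the centrality of $(x_n)$. The claim that $\|P_na\|\to 0$ for every $a\in A$ is false for every $a\notin\ker\pi_\infty$: since $\pi_n$ is faithful on $J_n$ and kills the complementary summand, $\|P_na\|=\|\pi_n(a)\|$, and lower semicontinuity of $[\pi]\mapsto\|\pi(a)\|$ together with $[\pi_n]\to[\pi_\infty]$ gives $\liminf_n\|P_na\|\geq\|\pi_\infty(a)\|>0$. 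Indeed your own appeal to lower semicontinuity yields $\|P_nb_0\|=\|\pi_n(b_0)\|\geq\beta$, which contradicts, for $a=b_0$, the centrality estimate in the same paragraph. Consequently $\|[x_n,a]\|=\|[\pi_n(x_n),\pi_n(a)]\|$ need not tend to $0$; for $A=C(\omega+1,M_2)$ and $a$ the constant function equal to a fixed rank-one projection, your $x_n$ are not central.

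The gap is not a matter of choosing the $x_n\in J_n$ more carefully: for $A=C(\omega+1,M_2)$ any central sequence has $\pi_n(x_n)$ asymptotically scalar, which destroys the uniform bound $\inf_\lambda\|\pi_n((x_n-\lambda)b_0)\|\geq\delta$ that, as you rightly observe, is the form consumed in the proof of Proposition \ref{Proposition: stronger} (and that algebra has only inner derivations, so it cannot have Property AEP). The two demands of Definition \ref{Definition: AEP+} --- asymptotic centrality and uniform non-scalarity of $\pi_n(x_n)$ --- pull in opposite directions, and reconciling them is the entire content of the argument on page 139 of \cite{Elliott3}, which operates under standing hypotheses beyond the homeomorphism type of $\hat{A}$ (in effect the failure of continuous trace for the quotient, e.g.\ unbounded fibre dimensions, which is what allows $x_n$ to be far from the scalars in the fibre $\pi_n$ while almost commuting with a prescribed finite set). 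Your handling of the one-dimensional fibres exhibits the same problem in its purest form: if all but finitely many $H_n$ are one-dimensional (e.g.\ $A=C(\omega+1)$) the construction is empty, and the remark that such indices ``contribute only a commutative tail'' is not an argument. The decomposition is the right starting point, but the proposal is missing the one idea that makes the lemma work where it does.
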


We can now prove the main result of this section that Property AEP as
defined in \ref{Definition: property AEP} is equivalent to having an outer
*-derivation.

\begin{theorem}
\label{Theorem: AEP}If $A$ is a C*-algebra, the following statements are
equivalent:

\begin{enumerate}
\item $A$ has an outer derivation;

\item $A$ has Property AEP.
\end{enumerate}
\end{theorem}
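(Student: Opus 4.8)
The plan is to treat the two implications separately, since $(2)\Rightarrow(1)$ is already settled in the discussion following Definition \ref{Definition: property AEP}. Indeed, if $A$ has Property AEP, fix a dense sequence in the unit ball and let $\mathbf{x}$ be a witnessing sequence of pairwise orthogonal positive contractions; as the classes of $E_{\mathbf{x}}$ are meager while $(0,1)^{\mathbb{N}}$ is a Baire space, there are $\mathbf{s},\mathbf{t}$ lying in distinct $E_{\mathbf{x}}$-classes, so that $a=\sum_{n}(t_n-s_n)x_n\in A^{\ast\ast}$ derives $A$ and $\mathrm{ad}(ia)$ is an outer $\ast$-derivation (its exponential $\mathrm{Ad}(\mathrm{exp}(ia))$ being outer). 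Thus essentially all the work lies in $(1)\Rightarrow(2)$, which I would establish in contrapositive form: assuming that $A$ fails Property AEP, I would show that every derivation of $A$ is inner. Because every derivation is a linear combination of $\ast$-derivations, it is enough to exclude outer $\ast$-derivations.

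The engine of the hard direction is Elliott's characterization of those separable C*-algebras all of whose derivations are inner (the main theorem of \cite{Elliott3}), which encodes this property in a spectral regularity condition on $\check{A}$. My strategy is to show that the failure of Property AEP forces exactly this condition, by harvesting the lemmas already proved. First, Lemma \ref{Lemma: non t1} shows that if $A$ lacks Property AEP then $\check{A}$ must be $T_1$, for otherwise $A$ would have Property AEP. Second, by Proposition \ref{Proposition: stronger}, $A$ also lacks Property AEP$^{+}$, so the contrapositive of Lemma \ref{Lemma: quotient} applies: for every sequence $(\xi_n)_{n}$ of separated points of $\check{A}$, with $F$ its set of limit points and $I$ the corresponding ideal, the quotient $A/I$ has continuous trace and the center of $M(A/I)$ is trivial. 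These are precisely the spectral conditions appearing in Elliott's criterion, so one concludes that $A$ has only inner derivations, which is the contrapositive of $(1)\Rightarrow(2)$.

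Read forward, the same argument becomes a short case analysis. If $A$ carries an outer $\ast$-derivation, Elliott's theorem says the spectral regularity condition fails. If it fails because $\check{A}$ is not $T_1$, then Lemma \ref{Lemma: non t1} produces Property AEP directly. Otherwise $\check{A}$ is $T_1$ and the failure is witnessed by a sequence of separated points whose limit-ideal quotient $A/I$ either lacks continuous trace or has $M(A/I)$ with nontrivial center; then Lemma \ref{Lemma: quotient} produces Property AEP$^{+}$, and hence Property AEP by Proposition \ref{Proposition: stronger}. I expect the extremal configuration, in which the pathology is concentrated on a piece with spectrum homeomorphic to the one-point compactification of a countable discrete space, to be exactly what Lemma \ref{Lemma: spectrum omega+1} is designed to absorb.

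The step I expect to be the main obstacle is the precise interface with Elliott's theorem: verifying that the negation of his spectral condition decomposes cleanly into the hypotheses of Lemma \ref{Lemma: non t1} and Lemma \ref{Lemma: quotient}, i.e.\ that the existence of an outer derivation is equivalent to the presence of either a non-$T_1$ point of $\check{A}$ or a sequence of separated points whose limit-ideal quotient is irregular in the stated sense. This amounts to rewriting Elliott's criterion in the exact coordinates used by Lemma \ref{Lemma: quotient} --- separated points, the limit set $F$, the ideal $I$, continuous trace of $A/I$, and the center of $M(A/I)$ --- and confirming that Lemma \ref{Lemma: spectrum omega+1} accounts for the only remaining way the condition can break down. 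Once this bookkeeping is in place, the conclusion follows at once from the cited lemmas together with Proposition \ref{Proposition: stronger}.
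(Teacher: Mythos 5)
Your treatment of $(2)\Rightarrow(1)$ matches the paper, and your forward-reading case analysis ($\check{A}$ not $T_1$ $\rightsquigarrow$ Lemma \ref{Lemma: non t1}; $T_1$ but irregular $\rightsquigarrow$ Lemma \ref{Lemma: quotient} plus Proposition \ref{Proposition: stronger}; the $\omega+1$-spectrum configuration $\rightsquigarrow$ Lemma \ref{Lemma: spectrum omega+1}) is exactly the skeleton of the paper's proof of $(1)\Rightarrow(2)$. But the step you defer as ``bookkeeping'' --- producing, from the existence of an outer derivation, one of the three configurations that your lemmas can absorb --- is the actual mathematical content of this direction, and your proposed route through it does not work as stated. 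The \emph{main theorem} of \cite{Elliott3} characterizes algebras with only inner derivations as direct sums of finitely many simple algebras with a continuous-trace algebra; its negation is not ``precisely'' the statement that some sequence of separated points has a limit-ideal quotient that lacks continuous trace or has $M(A/I)$ with nontrivial center, and converting one into the other is not a formal rewriting. The paper sidesteps this entirely by invoking a different result, Lemma~16 of \cite{Elliott3}, which already delivers a usable dichotomy: an outer derivation forces either a quotient whose spectrum is the one-point compactification of a countable discrete space, or a non-Hausdorff primitive spectrum.

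Even granting that dichotomy, your contrapositive argument is missing the two ingredients that make the $T_1$-but-not-Hausdorff case land in the hypotheses of Lemma \ref{Lemma: quotient}. First, one needs a sequence of \emph{separated} points of $\check{A}$ whose limit set contains a pair of points that cannot be separated; this uses Dixmier's theorem that separated points are dense in $\check{A}$ (Proposition~1 of \cite{Dixmier-separated}) together with separability. Second, one needs Lemma~3.1 of \cite{Akemann-Pedersen} to conclude that the resulting quotient $A/I$ either fails to have continuous trace or has a multiplier algebra with nontrivial center. Without these two external facts your case analysis has no bridge from ``$\check{A}$ is $T_1$ and not Hausdorff'' to ``some limit-ideal quotient is irregular,'' and your contrapositive version (``for every sequence of separated points the quotient is regular, hence Elliott's condition holds'') is left unestablished. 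So the architecture is right, but the proof is incomplete at its central step.
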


\begin{proof}
We have already pointed out that Property AEP implies the existence of an
outer *-derivation. It remains only to show the converse. Suppose that $A$
has an outer derivation. By Lemma 16 of \cite{Elliott3}, either there is a
quotient $B$ of $A$ whose spectrum $\hat{B}$ is homeomorphic to the one
point compactification of a countable discrete space, or the primitive
spectrum $\check{A}$ of $A$ is not Hausdorff. In the first case, $A$ has
Property AEP by virtue of Lemma \ref{Lemma: spectrum omega+1} and Lemma \ref%
{Lemma: lifting}. Suppose that, instead, the primitive spectrum $\check{A}$
of $A$ is not Hausdorff. If $\check{A}$ is not even $T_{1}$, the conclusion
follows from Lemma \ref{Lemma: non t1}. Suppose now that $\check{A}$ is $%
T_{1}$. Since $\check{A}$ is not Hausdorff, there are two points $\rho
_{0},\rho _{1}$ of $\check{A}$ that do not admit any disjoint open
neighbourhoods. By separability, and since separated points are dense in $%
\check{A}$ by Proposition 1 of \cite{Dixmier-separated}, one can find a
sequence $\left( \xi _{n}\right) _{n\in \mathbb{N}}$ of separated points of $%
\check{A}$ whose set $F$ of limit points contains both $\rho _{0}$ and $\rho
_{1}$. Define $I$ to be the closed self-adjoint ideal $I$ of $A$
corresponding to the closed subset $F$ of $\check{A}$. By Lemma 3.1 of \cite%
{Akemann-Pedersen}, either $A\left/ I\right. $ does not have continuous
trace or the multiplier algebra of $A\left/ I\right. $ has nontrivial
center. In either cases, it follows that $A$ has Property AEP$^{+}$ and, in
particular, the weaker Property AEP by Lemma \ref{Lemma: quotient}.
\end{proof}

\section{The case of algebras with only inner derivations}

\label{Section: inner}In this section we will prove that, if a C*-algebra $A$
with only inner derivations does not have continuous trace, then the
relation of unitary equivalence of approximately inner automorphisms of $A$
is not classifiable by countable structures. In proving this fact we will
also show that any such C*-algebra contains a central sequence that is not
hypercentral.

If $A$ is a C*-algebra, denote by $A^{\infty }$ the quotient of the direct
product $\prod_{n\in \mathbb{N}}A$ by the direct sum $\bigoplus_{n\in 
\mathbb{N}}A$ (defined as in \cite{Blackadar} II.8.1.2.). Identifying as it
is customary $A$ with the algebra of elements of $A^{\infty }$ admitting
constant representative sequence, denote by $A_{\infty }$ the relative
commutant 
\begin{equation*}
A^{\prime }\cap A^{\infty }=\left\{ x\in A^{\infty }\mid \forall y\in A\text{%
, }[x,y]=0\right\} \text{.}
\end{equation*}%
Finally define%
\begin{equation*}
\mathrm{Ann}(A,A_{\infty })=\left\{ x\in A_{\infty }\mid \forall y\in A\text{%
, }xy=0\right\}
\end{equation*}%
to be the annihilator ideal of $A$ in $A_{\infty }$. Observe that, if $A$ is
unital, then $\mathrm{Ann}(A,A_{\infty })$ is the trivial ideal.

A \textbf{central sequence} in a C*-algebra $A$ is a bounded sequence $%
\left( x_{n}\right) _{n\in \mathbb{N}}$ of elements of $A$ that
asymptotically commute with any element of $A$. This means that for any $%
a\in A$,%
\begin{equation*}
\lim_{n\rightarrow +\infty }\left\Vert \left[ x_{n},a\right] \right\Vert =0%
\text{.}
\end{equation*}%
Equivalently the image of $\left( x_{n}\right) _{n\in \mathbb{N}}$ in the
quotient of $\prod_{n\in \mathbb{N}}A$ by $\bigoplus_{n\in \mathbb{N}}A$
belongs to $A_{\infty }$. From the last characterization it is clear that if 
$\left( x_{n}\right) _{n\in \mathbb{N}}$ is a central sequence of elements $%
A $ with spectra contained in some subset $D$ of $\mathbb{C}$ and $%
f:D\rightarrow \mathbb{C}$ is a continuous function such that $f\left(
0\right) =0$, then the sequence $\left( f\left( x_{n}\right) \right) _{n\in 
\mathbb{N}}$ is central. It is not difficult to verify that, if $\left(
x_{n}\right) _{n\in \mathbb{N}}$ is a central sequence and $b\in M(A)$, then
the sequence $\left( \left[ b,x_{n}\right] \right) _{n\in \mathbb{N}}$
converges strictly to $0$.

If $A$ is unital, a central sequence $\left( x_{n}\right) _{n\in \mathbb{N}}$
is called \textbf{hypercentral }(see \cite{Phillips2} Section 1) if it
asymptotically commutes with any other central sequence. This amounts to say
that for any other central sequence $\left( y_{n}\right) _{n\in \mathbb{N}}$%
\begin{equation*}
\lim_{n\rightarrow +\infty }\left\Vert \left[ x_{n},y_{n}\right] \right\Vert
=0\text{.}
\end{equation*}%
Equivalently the image of $\left( x_{n}\right) _{n\in \mathbb{N}}$ in the
quotient of $\prod_{n\in \mathbb{N}}A$ by $\bigoplus_{n\in \mathbb{N}}A$
belongs to the center of $A_{\infty }$. In order to generalize the notion of
hypercentral sequence to the nonunital setting it is convenient for our
purposes to consider the strict topology rather than the norm topology.
Henceforth we give the following definition:

\begin{definition}
If $A$ is a (not necessarily unital) C*-algebra, a sequence $\left(
x_{n}\right) _{n\in \mathbb{N}}$ of elements of $A$ is called \textbf{%
hypercentral} if it is central and, for any other central sequence $\left(
y_{n}\right) _{n\in \mathbb{N}}$, the sequence%
\begin{equation*}
\left( \left[ x_{n},y_{n}\right] \right) _{n\in \mathbb{N}}
\end{equation*}%
converges to $0$ in the strict topology.
\end{definition}

Observe that a central sequence $\left( x_{n}\right) _{n\in \mathbb{N}}$ is
hypercentral if and only if the image of the element of $A_{\infty }$ having 
$\left( x_{n}\right) _{n\in \mathbb{\mathbb{N}}}$ as representative sequence
in the quotient $A_{\infty }\left/ \mathrm{Ann}(A,A_{\infty })\right. $
belongs to the center of $A_{\infty }\left/ \mathrm{Ann}(A,A_{\infty
})\right. $. It is clear from this characterization that, if $\left(
x_{n}\right) _{n\in \mathbb{N}}$ is a hypercentral sequence of elements of $%
A $ with spectra contained in some subset $D$ of $\mathbb{C}$ and $%
f:D\rightarrow \mathbb{C\ } $ is a complex-valued continuous function such
that $f\left( 0\right) =0$, then the sequence $\left( f\left( x_{n}\right)
\right) _{n\in \mathbb{N}}$ is hypercentral. When $A$ is unital the ideal $%
\mathrm{Ann}\left( A,A_{\infty }\right) $ is trivial, and hence this
definition agrees with the usual definition of hypercentral sequence.

The fact that a unital simple C*-algebra contains a central sequence that is
not hypercentral is a particular case of Theorem 3.6 of \cite{Phillips2}. We
will show here how one can generalize this fact to all simple nonelementary
C*-algebras. The proof deeply relies on ideas from \cite{Phillips2}.

\begin{lemma}
\label{Lemma: converges strictly}If $\left( x_{n}\right) _{n\in \mathbb{N}}$
is a hypercentral sequence in $A$ and $\alpha $ is an approximately inner
automorphism of $A$, then $\left( \alpha (x_{n})-x_{n}\right) _{n\in \mathbb{%
N}}$ converges strictly to $0$.
\end{lemma}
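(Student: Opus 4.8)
The plan is to work with the characterization of hypercentrality in terms of the quotient algebra $A_{\infty}/\mathrm{Ann}(A,A_{\infty})$. Recall that $(x_{n})_{n\in\mathbb{N}}$ hypercentral means precisely that the image of its representative in $A_{\infty}/\mathrm{Ann}(A,A_{\infty})$ is central. Since $\alpha$ is \emph{approximately inner}, it is a pointwise-norm limit of inner automorphisms $\mathrm{Ad}(u_{k})$ with $u_{k}\in U(A)$ (or unitary multipliers). The key observation is that conjugation by a unitary that lives in $A$ (or in $M(A)$) induces an automorphism of $A_{\infty}$ that fixes the image of $A$, hence descends to the quotient $A_{\infty}/\mathrm{Ann}(A,A_{\infty})$; and because a hypercentral element is central in this quotient, conjugation by such a unitary should move it only within its commutant, i.e.\ fix it modulo the annihilator. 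The strict convergence of $(\alpha(x_{n})-x_{n})_{n}$ is then exactly the statement that the difference lands in $\mathrm{Ann}(A,A_{\infty})$.

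\textbf{Key steps.}
First I would reduce to the inner case: since $\alpha=\lim_{k}\mathrm{Ad}(u_{k})$ pointwise in norm, and strict convergence is tested against multiplication by each fixed $a\in A$, I would show that it suffices to control $(\mathrm{Ad}(u_{k})(x_{n})-x_{n})_{n}$ for a suitable diagonal choice, or more cleanly argue directly on the level of the quotient. Second, for a single unitary $u\in M(A)$, I would compute that $\mathrm{Ad}(u)$ induces a well-defined automorphism $\widehat{\mathrm{Ad}(u)}$ of $A^{\infty}$ preserving $A_{\infty}$ and $\mathrm{Ann}(A,A_{\infty})$, and that on $A_{\infty}/\mathrm{Ann}(A,A_{\infty})$ it acts as conjugation by the image of $u$. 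Third, since the image $\bar{x}$ of $(x_{n})_{n}$ is central in $A_{\infty}/\mathrm{Ann}(A,A_{\infty})$, conjugation by any element fixes $\bar{x}$; hence $\mathrm{Ad}(u)(x_{n})-x_{n}$ represents $0$ in the quotient, which by the earlier characterization is precisely the assertion that $([u,x_{n}]\text{-type difference})$ converges strictly to $0$. Fourth, I would handle the passage to the limit $\alpha$, combining the approximate-innerness with the fact (noted in the excerpt) that for a central sequence $(x_{n})_{n}$ and $b\in M(A)$ the sequence $([b,x_{n}])_{n}$ already converges strictly to $0$; this lets me absorb the error terms coming from $u_{k}\to$ (the implementing data of $\alpha$).

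\textbf{Main obstacle.}
The hard part will be the interchange of the two limits: $\alpha$ is only a \emph{pointwise} norm-limit of the $\mathrm{Ad}(u_{k})$, and the unitaries $u_{k}$ need not converge in any algebra, so I cannot simply pass a single unitary through the quotient construction. The delicate point is to show that the strict convergence $\alpha(x_{n})-x_{n}\to 0$ survives this approximation uniformly enough in $n$. I expect to resolve this by testing strict convergence against a fixed $a\in A$: for each such $a$ and each $\varepsilon>0$, approximate $\alpha(a)$ by $\mathrm{Ad}(u_{k})(a)=u_{k}au_{k}^{\ast}$ within $\varepsilon$, use that $(x_{n})_{n}$ asymptotically commutes with $a$ and with the relevant multipliers, and estimate $\|(\alpha(x_{n})-x_{n})a\|$ by splitting through the inner approximation, with the central-sequence estimate $\|[u_{k},x_{n}]a\|\to 0$ (in the strict sense) controlling the inner contribution. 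The bookkeeping to make these estimates uniform, so that the diagonal sequence indeed converges strictly, is the real content; everything else is a direct application of the quotient characterization of hypercentrality.
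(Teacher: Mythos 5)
There is a genuine gap, and it sits exactly where you locate the ``main obstacle.'' Your plan handles a \emph{single} unitary $u\in M(A)$ and then tries to pass to the limit $\alpha=\lim_k\mathrm{Ad}(u_k)$ using the estimate $\|[u_k,x_n]a\|\to 0$ together with a diagonal choice $k=k(n)$. But that estimate is a consequence of mere \emph{centrality} of $(x_n)$ (it is the remark in Section \ref{Section: inner} that $([b,x_n])_n$ converges strictly to $0$ for each \emph{fixed} $b\in M(A)$), and it holds only for fixed $k$ as $n\to\infty$; it gives no control that is uniform in $k$, and for a diagonal sequence $v_n:=u_{k(n)}$ the commutators $[v_n,x_n]$ are commutators with a \emph{varying} sequence of multipliers, which centrality does not control. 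If your interchange of limits could be carried out with centrality alone, the lemma would hold for every central sequence, which is false: the whole point of Proposition \ref{Proposition: fundamental hypercentral} is that approximately inner automorphisms built from one central sequence move a central non-hypercentral sequence by an amount bounded away from $0$ strictly. The missing idea is the quantitative reformulation of hypercentrality that the paper extracts first: for every $a\in A$ and $\varepsilon>0$ there are a finite set $F$ in the unit ball of $A$, a $\delta>0$ and an $n_0$ such that \emph{every} contraction $y\in M(A)$ with $\|[y,z]\|<\delta$ for all $z\in F$ satisfies $\|a[x_n,y]\|,\|[x_n,y]a\|\le\varepsilon$ for all $n\ge n_0$ (obtained by a diagonalization using strict density of the unit ball of $A$ in that of $M(A)$). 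One then factors $\alpha=\beta\circ\mathrm{Ad}(u)$ with $u$ a single fixed unitary multiplier (handled by centrality) and $\beta$ approximately inner and moving each element of $F$ by less than $\delta$; every inner automorphism in that neighbourhood is implemented by a unitary $\delta$-commuting with $F$, so the uniform estimate applies to all of them simultaneously, and this uniformity over the implementing unitaries is precisely what lets the two limits be interchanged. Your sketch never produces an estimate of this kind, and the phrase ``the bookkeeping to make these estimates uniform'' is exactly the step that cannot be done without it.

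A secondary flaw: in your step for a single unitary you argue that since the image $\bar x$ of $(x_n)$ is central in $A_\infty/\mathrm{Ann}(A,A_\infty)$, ``conjugation by any element fixes $\bar x$.'' But a unitary $u\in M(A)$ does not give an element of $A_\infty$ (the constant sequence $u$ is not a central sequence, and need not even lie in $A$), so centrality of $\bar x$ in that quotient says nothing about commutation with $u$. The conclusion you want there is nevertheless true, but it follows from centrality of $(x_n)$ alone via $\mathrm{Ad}(u)(x_n)-x_n=[u,x_n]u^{\ast}$ and the strict convergence of $[u,x_n]$ to $0$ --- hypercentrality is not what is doing the work in that step, which is another sign that the place where hypercentrality must enter has not been identified.
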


\begin{proof}
Suppose that $\varepsilon >0$ and $a$ is an element of $A$. Since $\left(
x_{n}\right) _{n\in \mathbb{N}}$ is a hypercentral sequence, by strict
density of the unit ball of $A$ in the unit ball of $M(A)$ (see \cite%
{Blackadar} II.7.3.11 and \cite{Lance} Proposition 1.4) there is a finite
subset $F$ of the unit ball of $A$, a positive real number $\delta $, and a
natural number $n_{0}$ such that, for every $n\geq n_{0}$ and every $y$ in
the unit ball $M(A)$ such that $\left\Vert \left[ y,z\right] \right\Vert
<\delta $ for every $z\in F$,%
\begin{equation*}
\max \left\{ \left\Vert a(x_{n}y-yx_{n})\right\Vert ,\left\Vert
(x_{n}y-yx_{n})a\right\Vert \right\} \leq \varepsilon \text{.}
\end{equation*}%
Consider the open neighbourhood%
\begin{equation*}
U=\left\{ \alpha \in \mathrm{Aut}(A)\mid \left\Vert \alpha \left( x\right)
-x\right\Vert <\delta \text{ for every }x\in F\right\}
\end{equation*}%
of $id_{A}$ in $\mathrm{Aut}(A)$. Observe that if $\beta \in U$ is inner,
then for every $n\geq n_{0}$%
\begin{equation*}
\left\Vert (\beta (x_{n})-x_{n})a\right\Vert \leq \varepsilon
\end{equation*}%
and%
\begin{equation*}
\left\Vert a(\beta (x_{n})-x_{n})\right\Vert \leq \varepsilon \text{.}
\end{equation*}%
Approximating with inner automorphisms, one can see that the same is true if 
$\beta \in U$ is just approximately inner. Since $\alpha $ is approximately
inner, there is a unitary multiplier $u$ of $A$ and an approximately inner
automorphism $\beta $ of $A$ in $U$ such that%
\begin{equation*}
\alpha =\beta \circ \mathrm{Ad}(u)\text{.}
\end{equation*}%
Consider a natural number $n_{1}\geq n_{0}$ such that, for $n\geq n_{1}$,%
\begin{equation*}
\left\Vert \beta ^{-1}(a)[x_{n},u]\right\Vert \leq \varepsilon
\end{equation*}%
and%
\begin{equation*}
\left\Vert \lbrack x_{n},u^{\ast }]\beta ^{-1}(a)\right\Vert \leq \varepsilon
\end{equation*}%
It follows that, if $n\geq n_{1}$,%
\begin{eqnarray*}
\left\Vert a(\alpha (x_{n})-x_{n})\right\Vert &\leq &\left\Vert a\beta (%
\mathrm{Ad}(u)\left( x_{n}\right) -x_{n})\right\Vert +\left\Vert \beta
(x_{n})-x_{n}\right\Vert \\
&\leq &\left\Vert \beta ^{-1}(a)(ux_{n}u^{\ast }-x_{n})\right\Vert
+\varepsilon \\
&=&\left\Vert \beta ^{-1}(a)[x_{n},u]\right\Vert +\varepsilon \\
&\leq &2\varepsilon
\end{eqnarray*}%
and, analogously,%
\begin{equation*}
\left\Vert (\alpha (x_{n})-x_{n})a\right\Vert \leq 2\varepsilon \text{.}
\end{equation*}%
Since $\varepsilon $ was arbitrary, this concludes the proof of the fact that%
\begin{equation*}
\left( a(x_{n})-x_{n}\right) _{n\in \mathbb{N}}
\end{equation*}%
converges strictly to $0$.
\end{proof}

If $\alpha $ is an automorphism of a C*-algebra $A$, then $\alpha ^{\ast
\ast }$ denotes the unique extension of $\alpha $ to a $\sigma $-weakly
continuous automorphism of the enveloping von Neumann algebra $A^{\ast \ast
} $ of $A$ (defined as in Proposition III.5.2.10 of \cite{Blackadar}).

\begin{lemma}
\label{Lemma: fixing pointwise}Suppose that $A$ is a C*-algebra such that
every central sequence in $A$ is hypercentral. If $\alpha $ is an
approximately inner automorphism of $A$, then $\alpha ^{\ast \ast }$ fixes
pointwise the center of $A^{\ast \ast }$, i.e.\ $\alpha ^{\ast \ast }\left(
z\right) =z$ for every central element of $A^{\ast \ast }$.
\end{lemma}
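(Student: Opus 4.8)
The plan is to reduce to a self-adjoint central contraction $z$ of $A^{\ast\ast}$, realize it as the $\sigma$-weak limit of an honest central sequence in $A$, and then invoke Lemma \ref{Lemma: converges strictly}. The reduction is immediate: since $\alpha^{\ast\ast}$ is a (linear) $\ast$-automorphism, it suffices to treat $z=z^{\ast}$, and after rescaling we may assume $\|z\|\le 1$. The key construction I aim for is a sequence $(x_n)_{n\in\mathbb N}$ of self-adjoint contractions in $A$ that is central (so $\|[x_n,a]\|\to 0$ for every $a\in A$) and converges $\sigma$-weakly to $z$. Granting this, the argument finishes quickly: by hypothesis $(x_n)$ is hypercentral, so Lemma \ref{Lemma: converges strictly} gives that $(\alpha(x_n)-x_n)_{n\in\mathbb N}$ converges strictly to $0$; meanwhile $\alpha^{\ast\ast}$ is $\sigma$-weakly continuous and $\alpha^{\ast\ast}(x_n)=\alpha(x_n)$, so $\alpha^{\ast\ast}(z)-z$ is the $\sigma$-weak limit of $\alpha(x_n)-x_n$.

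It therefore remains to see that strict convergence to $0$ of the bounded sequence $(\alpha(x_n)-x_n)$ in $A$ forces $\sigma$-weak convergence to $0$. For this I would use factorization of functionals: since the separable algebra $A$ has a bounded approximate unit, every $\phi$ in the predual $A^{\ast}$ of $A^{\ast\ast}$ can be written as $\phi(\,\cdot\,)=\psi(\,\cdot\,b)$ for some $b\in A$ and $\psi\in A^{\ast}$. Then $|\phi(\alpha(x_n)-x_n)|=|\psi((\alpha(x_n)-x_n)b)|\le\|\psi\|\,\|(\alpha(x_n)-x_n)b\|$, which tends to $0$ by the definition of the strict topology. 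Hence $\alpha^{\ast\ast}(z)-z=0$, as desired.

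The main obstacle is the construction of the approximating central sequence. By a diagonal argument against a dense sequence $(a_k)_k$ in the unit ball of $A$ and a dense sequence $(\phi_i)_i$ in $A^{\ast}$, it is enough to prove the following approximation claim: given self-adjoint $a_1,\dots,a_m\in A$, functionals $\phi_1,\dots,\phi_l\in A^{\ast}$, and $\varepsilon>0$, there is a self-adjoint contraction $x\in A$ with $\|[x,a_j]\|<\varepsilon$ for all $j$ and $|\phi_i(x)-\phi_i(z)|<\varepsilon$ for all $i$. The delicate point is that Kaplansky density only supplies a net of self-adjoint contractions in $A$ converging $\sigma$-strong$^{\ast}$ to $z$, along which $[x_\lambda,a_j]\to[z,a_j]=0$ merely $\sigma$-strong$^{\ast}$ and not in norm; this is exactly where the proof could fail if handled naively.

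To upgrade this to norm-small commutators I would argue by Hahn--Banach separation, and this is the step I expect to require the most care. Consider the convex set $K$ of all tuples $(i[x,a_1],\dots,i[x,a_m],\mathrm{Re}\,\phi_1(x),\dots,\mathrm{Re}\,\phi_l(x))$ in $A_{sa}^{\,m}\oplus\mathbb R^{\,l}$, the factor $A_{sa}^{\,m}$ carrying its norm topology, as $x$ ranges over self-adjoint contractions. If the target $(0,\dots,0,\mathrm{Re}\,\phi_1(z),\dots,\mathrm{Re}\,\phi_l(z))$ were not in the norm closure of $K$, then by Mazur's theorem and Hahn--Banach there would be functionals $\psi_j\in A_{sa}^{\ast}$ and scalars $\beta_i$ strictly separating it from $K$. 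The crucial observation is that, after extending each $\psi_j$ to a self-adjoint functional on $A$ and rewriting the commutator pairing through the module actions $(a\psi)(x)=\psi(xa)$ and $(\psi a)(x)=\psi(ax)$, the resulting separating functional $\Psi=\sum_j i(a_j\psi_j-\psi_j a_j)+\sum_i\beta_i\,\mathrm{Re}\,\phi_i$ lies in the predual $A^{\ast}$ and is therefore $\sigma$-weakly continuous. Evaluating at $z$ and using centrality kills every commutator term, giving $\Psi(z)=\sum_i\beta_i\,\mathrm{Re}\,\phi_i(z)$, which equals the value of the separating functional at the target; but because $\Psi$ is normal and $z$ is a $\sigma$-weak limit of self-adjoint contractions in $A$, we get $\Psi(z)\le\sup_{\,x=x^{\ast},\,\|x\|\le 1}\Psi(x)$, contradicting strict separation. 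This establishes the claim, produces the central sequence, and completes the proof of the lemma.
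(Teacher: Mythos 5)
Your overall strategy coincides with the paper's: approximate the central element $z$ by a genuine central sequence in $A$, invoke Lemma \ref{Lemma: converges strictly} (your sequence is hypercentral because the hypothesis makes every central sequence hypercentral), and pass from strict convergence to $\sigma$-weak convergence to conclude $\alpha ^{\ast \ast }(z)=z$. Where you differ is in how the approximating sequence is produced: the paper simply observes that $z$ derives $A$ (trivially, since $[z,a]=0$) and quotes Lemma 1.1 of Akemann--Pedersen, which supplies a bounded net in $A$ converging strongly to $z$ with commutators tending to $0$ in norm; you instead give a self-contained Hahn--Banach separation argument in the spirit of Arveson's construction of quasicentral approximate units. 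That separation argument is correct: the separating functional is a finite sum of terms of the form $i(a_{j}\psi _{j}-\psi _{j}a_{j})$ and $\beta _{i}\,\mathrm{Re}\,\phi _{i}$, all of which extend $\sigma$-weakly continuously to $A^{\ast \ast }$; the commutator terms vanish at the central element $z$, and Kaplansky density then contradicts strict separation. This buys self-containedness at the cost of length. Your passage from strict to $\sigma$-weak convergence via factorization of functionals is also fine (the paper uses Lemma 1.3.1 of \cite{Loring} instead).

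There is, however, one genuine flaw in your reduction: you diagonalize against ``a dense sequence $(\phi _{i})_{i}$ in $A^{\ast }$,'' but $A^{\ast }$ is not norm separable for a general separable $A$ (already $C[0,1]^{\ast }$ contains the uncountable family of point masses, pairwise at distance $2$, and the simple nonelementary algebras to which the lemma is ultimately applied also have nonseparable duals). Consequently no countable family of functionals can certify $\sigma$-weak convergence of your sequence to $z$, and the single sequence your diagonalization produces need not converge $\sigma$-weakly to $z$. The fix is cheap: to prove $\alpha ^{\ast \ast }(z)=z$ it suffices to show $\phi (\alpha ^{\ast \ast }(z))=\phi (z)$ for each fixed $\phi \in A^{\ast }$, and for that you need only run your approximation claim against the two functionals $\phi $ and $\phi \circ \alpha $ (together with the dense sequence $(a_{k})$ in the unit ball of $A$), since then $\phi (x_{n})\rightarrow \phi (z)$, $\phi (\alpha (x_{n}))\rightarrow \phi (\alpha ^{\ast \ast }(z))$, and $\phi (\alpha (x_{n})-x_{n})\rightarrow 0$. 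Be aware that the paper's own extraction of a sequence from the Akemann--Pedersen net leans on a parallel claim (that balls of $A^{\ast \ast }$ are $\sigma$-weakly metrizable) which runs into the same separability issue and admits the same repair.
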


\begin{proof}
Observe that $z$ derives $A$, since%
\begin{equation*}
za-az=0\in A
\end{equation*}%
for every $a\in A$. Thus, by Lemma 1.1 of \cite{Akemann-Pedersen}, there is
a bounded net $\left( z_{\lambda }\right) $ in $A$ converging strongly to $z$
such that, for every $a\in A$,%
\begin{equation*}
\lim_{\lambda }\left\Vert \left[ z_{\lambda }-z,a\right] \right\Vert =0\text{%
.}
\end{equation*}%
Recall that strong and $\sigma $-strong topology agree on bounded sets, and
that the $\sigma $-strong topology is stronger than the $\sigma $-weak
topology (see Definition I.3.1.1 of \cite{Blackadar}). Thus the net $\left(
z_{\lambda }\right) $ converges \textit{a fortiori} $\sigma $-weakly to $z$.
Since the $\sigma $-weak topology on $A^{\ast \ast }$ is the weak* topology
on $A^{\ast \ast }$ regarded as the dual space of $A^{\ast }$, the unit ball
of $A^{\ast \ast }$ is $\sigma $-weakly compact by Alaoglu's theorem
(Theorem 2.5.2 in \cite{Analysis_now}). Moreover by Kaplanski's Density
Theorem (Theorem 2.3.3 in \cite{Pedersen}) the unit ball of $A$ is $\sigma $%
-weakly dense in the unit ball of $A^{\ast \ast }$. As a consequence the
unit ball of $A^{\ast \ast }$ is $\sigma $-weakly metrizable, and the same
holds for balls of arbitrary radius Thus we can find a bounded \textit{%
sequence} $\left( z_{n}\right) _{n\in \mathbb{N}}$ in $A$ converging $\sigma 
$-weakly to $z$ such that, for every $a\in A$,%
\begin{equation*}
\lim_{n\rightarrow +\infty }\left\Vert \left[ z_{n}-z,a\right] \right\Vert =0%
\text{.}
\end{equation*}%
Since%
\begin{equation*}
\left[ z_{n}-z,a\right] =\left[ z_{n},a\right]
\end{equation*}%
for every $n\in \mathbb{N}$, $\left( z_{n}\right) _{n\in \mathbb{N}}$ is a
central and hence hypercentral sequence (every central sequence of $A$ is
hypercentral by assumption). Being $\alpha ^{\ast \ast }$ a $\sigma $-weakly
continuous automorphism of $A^{\ast \ast }$ extending $\alpha $, $\left(
\alpha (z_{n})\right) _{n\in \mathbb{N}}$ converges $\sigma $-weakly to $%
\alpha ^{\ast \ast }(z)$. It follows from Lemma \ref{Lemma: fixing pointwise}
and from the facts that $\alpha $ is approximately inner and the sequence $%
\left( z_{n}\right) _{n\in \mathbb{N}}$ is hypercentral that the bounded
sequence $\left( z_{n}-\alpha (z_{n})\right) _{n\in \mathbb{N}}$ converges
strictly to $0$. By Lemma 1.3.1 of \cite{Loring} and since weak and $\sigma $%
-weak topology agree on bounded sets, the sequence $\left( z_{n}-\alpha
(z_{n})\right) _{n\in \mathbb{N}}$ converges $\sigma $-weakly to $0$.
Therefore $z=\alpha ^{\ast \ast }\left( z\right) $.
\end{proof}

A C*-algebra is called \textit{elementary} if it is *-isomorphic to the
algebra of compact operators on some Hilbert space (see Definition IV.1.2.1
in \cite{Blackadar}). By Corollary 1 of Theorem 1.4.2 in \cite{Arveson} any
elementary C*-algebra is simple. Moreover by Corollary 3 of Theorem 1.4.4 in 
\cite{Arveson} any automorphism of an elementary C*-algebra is inner; in
particular the group $\mathrm{Inn}(A)$ of inner automorphisms of an
elementary C*-algebra $A$ is closed inside the group $\mathrm{Aut}(A)$ of
all automorphisms. Conversely if the group of inner automorphisms of a
simple C*-algebra $A$ is closed, then $A$ is elementary by Theorem 3.1 of 
\cite{Phillips1} together with Corollary IV.1.2.6 and Proposition IV.1.4.19
of \cite{Blackadar}.

Recall that in this paper all C*-algebras (apart from multiplier algebras
and enveloping von Neumann algebras) are assumed to be norm separable. In
particular separability of $A$ is assumed in Proposition \ref{Proposition:
simple hypercentral}; however we do not know if the separability assumption
is necessary there.

\begin{proposition}
\label{Proposition: simple hypercentral}If $A$ is a simple C*-algebra such
that every central sequence in $A$ is hypercentral, then $A$ is elementary.
\end{proposition}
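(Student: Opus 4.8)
The plan is to deduce the conclusion from the converse implication recalled just before the statement: for a simple C*-algebra, if $\mathrm{Inn}(A)$ is closed in $\mathrm{Aut}(A)$, then $A$ is elementary. Thus it suffices to prove that, under the standing hypothesis that every central sequence of $A$ is hypercentral, the subgroup $\mathrm{Inn}(A)$ is closed. Since the closure of $\mathrm{Inn}(A)$ in $\mathrm{Aut}(A)$ is precisely the set of approximately inner automorphisms of $A$, this amounts to showing that every approximately inner automorphism of $A$ is inner.

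First I would record what the hypothesis buys through Lemma \ref{Lemma: fixing pointwise}: if every central sequence of $A$ is hypercentral, then for every approximately inner automorphism $\alpha$ the normal extension $\alpha^{\ast\ast}$ fixes the center of $A^{\ast\ast}$ pointwise. The essential remaining point is therefore a converse-type criterion for innerness, namely that an approximately inner automorphism $\alpha$ with $\alpha^{\ast\ast}$ fixing the center of $A^{\ast\ast}$ pointwise is inner. Granting this criterion, combining it with the above consequence of Lemma \ref{Lemma: fixing pointwise} shows that every approximately inner automorphism is inner, whence $\mathrm{Inn}(A)$ is closed, and simplicity of $A$ then yields that $A$ is elementary. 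The forward direction of the criterion is immediate: an inner automorphism $\mathrm{Ad}(u)$ with $u\in M(A)\subseteq A^{\ast\ast}$ has $\alpha^{\ast\ast}=\mathrm{Ad}(u)$ acting trivially on the center, since central elements commute with $u$.

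To prove the criterion I would write $\alpha=\lim_n \mathrm{Ad}(u_n)$ in the topology of pointwise norm convergence, with $u_n$ unitaries in $M(A)$, and regard the $u_n$ as elements of the unit ball of $A^{\ast\ast}$. That ball is $\sigma$-weakly compact by Alaoglu's theorem and, as established in the proof of Lemma \ref{Lemma: fixing pointwise}, $\sigma$-weakly metrizable; so after passing to a subsequence we may assume $u_n\to u$ $\sigma$-weakly for some contraction $u\in A^{\ast\ast}$. The goal is to show that $u$ is a unitary multiplier of $A$ with $\alpha=\mathrm{Ad}(u)$. For fixed $a\in A$ the norm convergence $u_n a u_n^{\ast}\to\alpha(a)$, together with separate $\sigma$-weak continuity of multiplication, identifies the relevant one-sided $\sigma$-weak limits with $ua$ and $au^{\ast}$; the hypothesis that $\alpha^{\ast\ast}$ fixes the center of $A^{\ast\ast}$ is exactly what is needed to promote these one-sided statements to the relation $uau^{\ast}=\alpha(a)$ and to force $u^{\ast}u=uu^{\ast}=1$ and $uA,Au\subseteq A$, that is, $u\in M(A)$ unitary implementing $\alpha$.

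The main obstacle is precisely this last step. Because multiplication on $A^{\ast\ast}$ is only separately $\sigma$-weakly continuous, a $\sigma$-weak cluster point of a sequence of unitaries need be neither unitary nor a multiplier, and in general it may land in a central summand of $A^{\ast\ast}$ distinct from the one supporting $\alpha^{\ast\ast}$; this is exactly the pathology occurring for approximately inner automorphisms that fail to be inner. The role of the center-fixing hypothesis, and hence ultimately of the assumption that every central sequence is hypercentral, is to rule out this spreading across central components and thereby pin $u$ down to a genuine unitary multiplier. Once $\alpha$ is seen to be inner, the reductions of the first two paragraphs conclude the proof that $A$ is elementary.
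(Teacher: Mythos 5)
Your reduction is the same as the paper's: it suffices to show that $\mathrm{Inn}(A)$ is closed, i.e.\ that no outer automorphism is approximately inner, and Lemma \ref{Lemma: fixing pointwise} converts the hypercentrality hypothesis into the statement that every approximately inner automorphism fixes the center of $A^{\ast\ast}$ pointwise. The gap is in your ``criterion'': that an approximately inner $\alpha$ with $\alpha^{\ast\ast}$ fixing the center of $A^{\ast\ast}$ pointwise must be inner. Your proposed proof --- take a $\sigma$-weak cluster point $u$ of the implementing unitaries $u_n$ and argue that the center-fixing hypothesis forces $u$ to be a unitary multiplier implementing $\alpha$ --- is asserted rather than proved at exactly the decisive point. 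A $\sigma$-weak limit of unitaries is in general only a contraction (powers of the unilateral shift converge weakly to $0$), and nothing in your sketch extracts unitarity, multiplier membership, or the relation $uau^{\ast}=\alpha(a)$ from the assumption on the center; you identify this as ``the main obstacle'' and then simply declare that the hypothesis is ``exactly what is needed.'' A further warning sign is that your argument for the criterion never uses simplicity of $A$, whereas simplicity is essential: for non-simple C*-algebras there are outer automorphisms whose bidual extension fixes the center of $A^{\ast\ast}$ pointwise, so the criterion as you state and attempt to prove it cannot hold in that generality.

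The paper closes exactly this gap with Kishimoto's theorem (Corollary 2.3 of \cite{Kishimoto2}): for a simple C*-algebra, every outer automorphism $\alpha$ admits an irreducible representation $\pi$ such that $\pi$ and $\pi\circ\alpha$ are inequivalent. The central cover $z$ of $\pi$ in $A^{\ast\ast}$ is then mapped by $\alpha^{\ast\ast}$ to the central cover of $\pi\circ\alpha$, which differs from $z$ by Theorem 3.8.2 of \cite{Pedersen}; hence $\alpha^{\ast\ast}$ does not fix the center pointwise, and Lemma \ref{Lemma: fixing pointwise} shows $\alpha$ is not approximately inner. This is where simplicity enters, and it is the nontrivial input your proposal is missing; without it, or an equivalent argument, the proof does not go through.
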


\begin{proof}
It is enough to show that $\mathrm{Inn}\left( A\right) $ is closed in $%
\mathrm{Aut}\left( A\right) $ or, equivalently, that no outer automorphism
is approximately inner. Fix an outer automorphism $\alpha $ of $A$. Since $A$
is simple, by \cite{Kishimoto2} Corollary 2.3, there is an irreducible
representation $\pi $ such that $\pi $ and $\pi \circ \alpha $ are not
equivalent. If $z$ is the central cover of $\pi $ in $A^{\ast \ast }$
(defined as in \cite{Pedersen} 3.8.1), then $\alpha ^{\ast \ast }(z)$ is the
central cover of $\pi \circ \alpha $; moreover, being $\pi $ and $\pi \circ
\alpha $ not equivalent, $\alpha ^{\ast \ast }(z)$ is different from $z$ by
Theorem 3.8.2 of \cite{Pedersen}. Thus $\alpha ^{\ast \ast }$ does not fixes
pointwise the center of $A^{\ast \ast }$ and, by Lemma \ref{Lemma: fixing
pointwise}, $\alpha $ is not approximately inner.
\end{proof}

Proposition \ref{Proposition: simple hypercentral} shows that any simple
nonelementary C*-algebra contains a central sequence that is not
hypercentral. It is clear that the same conclusion holds for any C*-algebra
containing a simple nonelementary C*-algebra as a direct summand. By Theorem
3.9 of \cite{Akemann-Pedersen}, this class of C*-algebras coincides with the
class of C*-algebras that have only inner derivations and do not have
continuous trace. This concludes the proof of the following proposition:

\begin{proposition}
\label{Proposition: existence hypercentral}If $A$ is a C*-algebra that does
not have continuous trace and has only inner derivations, then $A$ contains
a central sequence that is not hypercentral.
\end{proposition}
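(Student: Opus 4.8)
The plan is to deduce the proposition from the structural characterization in Theorem 3.9 of \cite{Akemann-Pedersen} together with the already-established Proposition \ref{Proposition: simple hypercentral}. By that theorem, the C*-algebras that have only inner derivations and do not have continuous trace are exactly those that contain a simple nonelementary C*-algebra as a direct summand. So, under the hypotheses, I would first fix a decomposition $A=B\oplus C$ in which $B$ is simple and nonelementary (and, being a summand of the separable algebra $A$, itself separable, so that Proposition \ref{Proposition: simple hypercentral} applies).

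Next I would apply the contrapositive of Proposition \ref{Proposition: simple hypercentral} to $B$. Since $B$ is simple but not elementary, not every central sequence in $B$ can be hypercentral; hence there are a central sequence $(x_n)_{n\in\mathbb{N}}$ in $B$ and a witnessing central sequence $(y_n)_{n\in\mathbb{N}}$ in $B$ such that $([x_n,y_n])_{n\in\mathbb{N}}$ does not converge strictly to $0$ in $M(B)$.

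The remaining task is to transport this pair of sequences along the inclusion of $B$ as the ideal $B\oplus 0$ of $A$. For centrality in $A$, note that for $a=b+c$ with $b\in B$ and $c\in C$ one has $[x_n,c]=0$ by orthogonality of the summands, so $[x_n,a]=[x_n,b]\to 0$; thus both $(x_n)$ and $(y_n)$ are central sequences in $A$. Under the identification $M(A)=M(B)\oplus M(C)$ the strict topology of $M(A)$ restricts to that of each summand, and since the commutator $[x_n,y_n]$ computed in $A$ is supported in $B$, its failure to converge strictly to $0$ in $M(B)$ persists in $M(A)$; concretely, a testing element $b\in B$ with $\|[x_n,y_n]\,b\|\not\to 0$ lies in $A$ and witnesses the same failure there. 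Hence $(x_n)$ is a central sequence in $A$ that is not hypercentral.

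The one point demanding care is this final transport step, namely checking that non-hypercentrality is preserved under passage to a larger algebra containing $B$ as a direct summand. I expect this to reduce to the elementary observation that elements of $B$ remain available as strict-topology test elements in $A$, so that strict convergence in $M(A)$ of a $B$-supported sequence entails strict convergence in $M(B)$; this is the only place where the argument goes beyond a direct citation of the two earlier results.
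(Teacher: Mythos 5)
Your proposal is correct and follows essentially the same route as the paper: invoke Theorem 3.9 of Akemann--Pedersen to obtain a simple nonelementary direct summand, apply the contrapositive of Proposition \ref{Proposition: simple hypercentral} to that summand, and transport the resulting central non-hypercentral sequence to $A$. The paper dismisses the transport step as clear, whereas you spell it out; your verification (centrality via orthogonality of the summands, non-hypercentrality via strict-topology test elements taken from $B\subset A$) is exactly the right justification.
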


In view of this result, in order to finish the proof of Theorem \ref%
{Theorem: main}, it is enough to show that its conclusion holds for a
C*-algebra $A$ containing a central sequence that is not hypercentral.

\begin{proposition}
\label{Proposition: fundamental hypercentral}If $A$ is a C*-algebra
containing a central sequence that is not hypercentral, then the
approximately inner automorphisms of $A$ are not classifiable by countable
structures up to unitary equivalence.
\end{proposition}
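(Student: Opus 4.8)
The plan is to reduce to Lemma~\ref{Lemma: fundamental outer}: it suffices to exhibit a sequence $\mathbf{x}=(x_{n})_{n\in\mathbb{N}}$ of pairwise orthogonal positive contractions of $A$, obeying $\Vert[x_{n},a_{i}]\Vert<2^{-n}$ for $i\leq n$ with respect to a fixed dense sequence $(a_{n})$ of the unit ball of $A$, for which the relation $E_{\mathbf{x}}$ of Definition~\ref{Definition: E_x} has meager equivalence classes. The raw material is the hypothesis. Taking real and imaginary parts and rescaling, I may assume the given non-hypercentral central sequence consists of self-adjoint contractions, and the failure of hypercentrality produces a second central sequence, an element $a\in A$, and a constant $\varepsilon>0$ witnessing that the relevant commutators do not converge strictly to $0$.

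The construction I would carry out turns this data into an orthogonal central sequence $(x_{n})$ of positive contractions --- crucially \emph{not} projections --- together with a central sequence $(y_{n})$ with $y_{n}\in\overline{x_{n}Ax_{n}}$ and $\Vert a[x_{n},y_{n}]\Vert\geq\varepsilon$ for every $n$. The orthogonalisation, and the placement of the witness inside the hereditary subalgebras $\overline{x_{n}Ax_{n}}$ while keeping $(y_{n})$ central and the commutators bounded below, is exactly the delicate central-sequence analysis performed, in closely related situations, in Theorem~3.6 of \cite{Phillips2} and in Lemma~3.2 of \cite{Akemann-Pedersen}; here it must be adapted to the nonunital, strict-topology framework of this section. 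A final diagonal passage to a subsequence, possible because $(x_{n})$ is central, secures the estimate $\Vert[x_{n},a_{i}]\Vert<2^{-n}$ for $i\leq n$, so that $\mathbf{x}$ satisfies the hypotheses of Definition~\ref{Definition: E_x}. I expect this construction to be the main obstacle: it is where the hypothesis is actually used, and where the tension between orthogonality, centrality of $(y_{n})$, and the lower bound $\Vert a[x_{n},y_{n}]\Vert\geq\varepsilon$ has to be resolved.

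Granting the construction, showing that $E_{\mathbf{x}}$ has meager classes becomes clean. The point of placing $y_{n}$ in $\overline{x_{n}Ax_{n}}$ is that, since $x_{k}y_{n}=y_{n}x_{k}=0$ for $k\neq n$, all off-diagonal terms drop out and for the automorphism $\gamma=\mathrm{Ad}\big(\exp(i\sum_{k}c_{k}x_{k})\big)$ one gets exactly
\begin{equation*}
\gamma(y_{n})-y_{n}=\big[\exp(ic_{n}x_{n}),y_{n}\big]\exp(-ic_{n}x_{n}),\qquad\text{so}\qquad \Vert a(\gamma(y_{n})-y_{n})\Vert=\big\Vert a\big[\exp(ic_{n}x_{n}),y_{n}\big]\big\Vert.
\end{equation*}
Now suppose a class $X$ of $E_{\mathbf{x}}$ were nonmeager and fix $\mathbf{s}\in X$. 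A Baire-category argument shows that $X$ cannot be covered by the nowhere dense sets $\{\mathbf{t}:|t_{n}-s_{n}|<\tfrac14\text{ for all }n\geq N\}$, so there is $\mathbf{t}\in X$ with $|t_{n}-s_{n}|\geq\tfrac14$ for infinitely many $n$. Since $\mathbf{s}$ and $\mathbf{t}$ lie in the same class, $\gamma=\alpha_{\mathbf{s}}^{-1}\circ\alpha_{\mathbf{t}}=\mathrm{Ad}\big(\exp(i\sum_{k}(t_{k}-s_{k})x_{k})\big)$ is inner, say $\gamma=\mathrm{Ad}(w)$ with $w\in M(A)$; as $(y_{n})$ is central and $w$ is a multiplier, $[w,y_{n}]\to0$ strictly, whence $\Vert a(\gamma(y_{n})-y_{n})\Vert\to0$ (this is the mechanism already used in Lemma~\ref{Lemma: converges strictly}). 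On the other hand, by the displayed identity together with a commutator version of Lemma~\ref{Lemma: exponentials} --- which recovers $\Vert a[x_{n},y_{n}]\Vert$ from $\Vert a[\exp(ic_{n}x_{n}),y_{n}]\Vert$ uniformly for $\tfrac14\leq|c_{n}|<1$, using that $t\mapsto\exp(ic_{n}t)$ is injective on $[0,1]$ --- the quantity $\Vert a(\gamma(y_{n})-y_{n})\Vert$ stays bounded below by a fixed $\varepsilon'>0$ along those infinitely many $n$. This contradiction forces every class of $E_{\mathbf{x}}$ to be meager, and Lemma~\ref{Lemma: fundamental outer} then yields the proposition.
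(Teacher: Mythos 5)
Your reduction to Lemma~\ref{Lemma: fundamental outer} hinges on producing a \emph{pairwise orthogonal} central sequence $(x_{n})$ of positive contractions together with witnesses $y_{n}\in\overline{x_{n}Ax_{n}}$ satisfying $\Vert a[x_{n},y_{n}]\Vert\geq\varepsilon$, and this is exactly the step you do not carry out. It is not a routine adaptation of Lemma~3.2 of \cite{Akemann-Pedersen} or Theorem~3.6 of \cite{Phillips2}: in those arguments the orthogonal elements are manufactured from a sequence of \emph{distinct} (separated) points of the spectrum, each $x_{n}$ living over a different irreducible representation. By Proposition~\ref{Proposition: existence hypercentral}, the case this proposition must cover is precisely that of a simple nonelementary direct summand, where the primitive spectrum is a single point and that mechanism is unavailable. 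A merely central sequence gives asymptotic commutation, $\Vert[x_{n},x_{m}]\Vert\to0$, which is very far from exact orthogonality $x_{n}x_{m}=0$; and orthogonality is indispensable for your route, since without it neither $\sum_{n}t_{n}x_{n}$ nor the automorphism $\alpha_{\mathbf{t}}$ of Definition~\ref{Definition: E_x} is defined. So the proposal assumes away the actual content of the proposition. (The subsequent "clean" part is essentially sound granting the construction, though the "commutator version of Lemma~\ref{Lemma: exponentials}" you invoke would also need to be written out.)

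The paper resolves this tension by \emph{not} reducing to Lemma~\ref{Lemma: fundamental outer}. Instead of exponentiating a sum of orthogonal elements, it passes to a subsequence along which the exponentials $\exp(it_{k}x_{k})$ commute with one another, with the $a_{i}$, and with the $y_{n}$ up to summable errors, and defines $f(\mathbf{t})$ as the limit of $\mathrm{Ad}\bigl(\exp(it_{1}x_{1})\cdots\exp(it_{n}x_{n})\bigr)$; it then verifies the two hypotheses of Criterion~\ref{Criterion: non-classifiability} directly, the lower bound coming from $\Vert b[\exp(i(t_{n_{0}}-s_{n_{0}})x_{n_{0}}),y_{n_{0}}]\Vert\geq\varepsilon$ after controlling the finitely many almost-commutations. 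If you want to salvage your outline, you should either prove the orthogonalization claim in the simple case or switch to this product-of-exponentials construction, which only needs centrality.
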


\begin{proof}
Fix a dense sequence $\left( a_{n}\right) _{n\in \mathbb{N}}$ in the unit
ball of $A$. Suppose that $\left( x_{n}\right) _{n\in \mathbb{N}}$ is a
central sequence in $A$ that is not hypercentral. Thus there is a central
sequence $\left( y_{n}\right) _{n\in \mathbb{N}}$ in $A$ such that the
sequence%
\begin{equation*}
\left( \left[ x_{n},y_{n}\right] \right) _{n\in \mathbb{N}}
\end{equation*}%
does not converge strictly to $0$. This implies that, for some positive
contraction $b$ in $A$, then the sequence%
\begin{equation*}
\left( b[x_{n},y_{n}]\right) _{n\in \mathbb{N}}
\end{equation*}%
does not converge to $0$ is norm. Without loss of generality we can assume
that, for every $n\in \mathbb{N}$, $x_{n}$ and $y_{n}$ are positive
contractions. Observe that the sequence $\left( \mathrm{exp}\left(
itx_{n}\right) -1\right) _{n\in \mathbb{N}}$ is not hypercentral for any $%
t\in \left( 0,1\right) $. After passing to subsequences, we can assume that
for some strictly positive real number $\varepsilon $, for every $t\in
\left( 0,1\right) $, every $s\in \left( \frac{1}{2},1\right) $, and every $%
n,k\in \mathbb{N}$ such that $k\leq n$:

\begin{itemize}
\item $\left\Vert \left[ a_{k},\mathrm{exp}(itx_{n})\right] \right\Vert
<2^{-n}$;

\item $\left\Vert b[x_{n},y_{n}]\right\Vert \geq \varepsilon $;

\item $\left\Vert b[\mathrm{exp}(isx_{n}),y_{n}]\right\Vert \geq \varepsilon 
$.
\end{itemize}

Define $\eta =\frac{\varepsilon }{20}$. After passing to a further
subsequence, we can assume that, for every $t\in \left( 0,1\right) $ and
every $n,k\in \mathbb{N}$ such that $k\leq n$:

\begin{itemize}
\item $\left\Vert \left[ \mathrm{exp}(itx_{k}),y_{n}\right] \right\Vert
<2^{-n}\eta $;

\item $\left\Vert \left[ y_{k},\mathrm{exp}(itx_{n})\right] \right\Vert
<2^{-n}\eta $;

\item $\left\Vert \left[ \mathrm{exp}(itx_{k}),\mathrm{\mathrm{\mathrm{exp}}}%
(isx_{n})\right] \right\Vert <2^{-n}\eta $.
\end{itemize}

It is not difficult to verify that, if $\mathbf{t}\in \left( 0,1\right) ^{%
\mathbb{N}}$, then the sequence%
\begin{equation*}
\left( \mathrm{Ad}{\left( \mathrm{\mathrm{\mathrm{\mathrm{\mathrm{\mathrm{%
\mathrm{exp}}}}}}}(it_{n}x_{n})\right) }\right) _{n\in \mathbb{N}}
\end{equation*}%
is Cauchy in $\mathrm{Aut}(A)$. Denoting by $f(\mathbf{t})$ its limit, one
obtains a function 
\begin{equation*}
f:\left( 0,1\right) ^{\mathbb{N}}\rightarrow \overline{\mathrm{Inn}(A)}\text{%
.}
\end{equation*}
In the rest of the proof we will show that $f$ satisfies the hypothesis of
Criterion \ref{Criterion: non-classifiability}. Suppose that $M$ is a
Lipschitz constant for the function $t\mapsto \mathrm{exp}(it)$ on $\left[
0,1\right] $. If $\mathbf{t},\mathbf{s}\in \left( 0,1\right) ^{\mathbb{N}}$
and $n\in \mathbb{N}$ are such that $\left\vert t_{k}-s_{k}\right\vert
<\varepsilon $ for $k\in \left\{ 1,2,\ldots ,n\right\} $, then it is easy to
see that%
\begin{equation*}
\left\Vert f(\mathbf{t})(a_{k})-f(\mathbf{s})(a_{k})\right\Vert \leq
2^{-n+1}+\varepsilon M
\end{equation*}%
for $k\leq n$. This shows that the function $f\ $is continuous,
particularly, Baire measurable. Moreover, if $\mathbf{t},\mathbf{s}\in
\left( 0,1\right) ^{\mathbb{N}}$ are such that $\mathbf{s}-\mathbf{t}\in
\ell ^{1}$, then the sequence%
\begin{equation*}
\left( \mathrm{exp}(it_{1}x_{1})\cdots \mathrm{exp}(it_{n}x_{n})\mathrm{exp}%
(-is_{n}x_{n})\cdots \mathrm{exp}(-is_{1}x_{1})\right) _{n\in \mathbb{N}}
\end{equation*}%
is Cauchy in $U(A)$, and hence has a limit $u\in U(A)$. It is now readily
verified that%
\begin{equation*}
f(\mathbf{t})=\mathrm{Ad}(u)\circ f(\mathbf{s})
\end{equation*}%
and hence $f(\mathbf{t})$ and $f(\mathbf{s})$ are unitarily equivalent.
Finally, suppose that $C$ is a comeager subset of $\left( 0,1\right) ^{%
\mathbb{N}}$. Thus, there are $\mathbf{t},\mathbf{s}\in C$ such that $%
\left\vert t_{n}-s_{n}\right\vert \in \left( \frac{1}{2},1\right) $ for
infinitely many $n\in \mathbb{N}$. We claim that $f(\mathbf{t})$ and $f(%
\mathbf{s})$ are not unitarily equivalent. Suppose by contradiction that
this is not the case. Thus there is $u\in U(A)$ such that%
\begin{equation*}
f(\mathbf{t})=\mathrm{Ad}(u)\circ f(\mathbf{s})\text{.}
\end{equation*}%
This implies that the sequence%
\begin{equation*}
\left( u\mathrm{exp}(it_{1}x_{1})\cdots \mathrm{exp}(it_{n}x_{n})\mathrm{exp}%
(-is_{n}x_{n})\cdots \mathrm{exp}(-is_{1}x_{1})\right) _{n\in \mathbb{N}}
\end{equation*}%
in $U(A)$ is central, i.e.\ asymptotically commutes (in norm) with any
element of $A$. Fix now any $n_{0}\in \mathbb{N}$ such that $\left\vert
t_{n_{0}}-s_{n_{0}}\right\vert \in \left( \frac{1}{2},1\right) $ and%
\begin{equation*}
\left\Vert b[y_{n},u]\right\Vert <\eta
\end{equation*}%
for $n\geq n_{0}$. Suppose that $n>n_{0}$. Using the fact that the elements $%
\mathrm{exp}\left( it_{m}x_{m}\right) $ and $\mathrm{exp}\left(
it_{k}x_{k}\right) $ commute up to $\eta 2^{-m}$ for $k,m\in \mathbb{N}$,
one can show that%
\begin{equation*}
by_{n_{0}}u\mathrm{exp}(it_{1}x_{1})\cdots \mathrm{exp}(it_{n}x_{n})\mathrm{%
exp}(-is_{n}x_{n})\cdots \mathrm{exp}(-is_{1}x_{1})
\end{equation*}%
is at distance at most $5\eta $ from 
\begin{eqnarray*}
&&buy_{n_{0}}\mathrm{exp}(i(t_{n_{0}}-s_{n_{0}})x_{n_{0}})\mathrm{exp}%
(it_{1}x_{1})\cdots \widehat{\mathrm{exp}(it_{n_{0}}x_{n_{0}})} \\
&&\cdots \mathrm{\mathrm{ex}}(it_{n}x_{n})\mathrm{exp}(-is_{n}x_{n})\cdots 
\widehat{\mathrm{exp}(is_{n_{0}}x_{n_{0}})}\cdots \mathrm{exp}(-is_{1}x_{1})%
\text{,}
\end{eqnarray*}%
where $\widehat{\mathrm{exp}(it_{n_{0}}x_{n_{0}})}$ and $\widehat{\mathrm{exp%
}(is_{n_{0}}x_{n_{0}})}$ indicate omitted terms in the product. Similarly
\begin{equation*}
bu\mathrm{exp}(it_{1}x_{1})\cdots \mathrm{exp}(it_{n}x_{n})\mathrm{exp}%
(-is_{n}x_{n})\cdots \mathrm{exp}(-is_{1}x_{1})y_{n_{0}}
\end{equation*}%
is at distance at most $5\eta $ from 
\begin{eqnarray*}
&&bu\mathrm{exp}\left( i\left( t_{n_{0}}-s_{n_{0}}\right) x_{n_{0}}\right)
y_{n_{0}}\mathrm{exp}\left( it_{1}x_{1}\right) \cdots \widehat{\mathrm{exp}%
\left( it_{n_{0}}x_{n_{0}}\right) } \\
&&\cdots \mathrm{exp}\left( it_{n}x_{n}\right) \mathrm{exp}\left(
-is_{n}x_{n}\right) \cdots \widehat{\mathrm{exp}\left(
is_{n_{0}}x_{n_{0}}\right) }\ldots \mathrm{exp}\left( -is_{1}x_{1}\right) 
\text{.}
\end{eqnarray*}%
Thus, the norm of the commutator of 
\begin{equation*}
u\mathrm{exp}(it_{1}x_{1})\cdots \mathrm{exp}(it_{n}x_{n})\mathrm{exp}%
(-is_{n}x_{n})\cdots \mathrm{exp}(-is_{1}x_{1})
\end{equation*}%
and $y_{0}$ is at least%
\begin{equation*}
\left\Vert b[\mathrm{exp}(i(t_{n_{0}}-s_{n_{0}})x_{n_{0}}),y_{n_{0}}]\right%
\Vert -10\eta \geq \varepsilon -10\eta \geq \frac{\varepsilon }{2}\text{.}
\end{equation*}%
This contradicts the fact that the sequence%
\begin{equation*}
\left( u\mathrm{exp(}it_{1}x_{1})\cdots \mathrm{exp}(it_{n}x_{n})\mathrm{exp}%
(-is_{n}x_{n})\cdots \mathrm{exp}\left( -is_{1}x_{1}\right) \right) _{n\in 
\mathbb{N}}
\end{equation*}%
is central and concludes the proof.
\end{proof}

\section{A dichotomy for derivations}

\label{Section: dichotomy derivations}

If $A$ is a C*-algebra, then we denote as in Section \ref{Section: outer} by 
$\Delta _{0}(A)$ the separable Banach space of inner derivations of $A$
endowed with the norm $\left\Vert \cdot \right\Vert _{\Delta _{0}\left(
A\right) }$ and by $\overline{\Delta _{0}\left( A\right) }$ the closure of $%
\Delta _{0}\left( A\right) $ inside the space $\Delta \left( A\right) $ of
derivations of $A$ endowed with the operator norm. Suppose that $E_{\Delta
(A)}$ is the Borel equivalence relation on $\overline{\Delta _{0}(A)}$
defined by%
\begin{equation*}
\delta _{0}E_{\Delta (A)}\delta _{1}\quad \text{iff}\quad \delta _{0}-\delta
_{1}\in \Delta _{0}(A)\text{.}
\end{equation*}%
Observe that $E_{\Delta (A)}$ is the orbit equivalence relation associated
with the continuous action of the additive group of $\Delta _{0}(A)$ on $%
\overline{\Delta _{0}(A)}$ by translation.

\begin{theorem}
\label{Theorem: dichotomy derivations}If $A$ is a unital C*-algebra, then
the following statements are equivalent:

\begin{enumerate}
\item $\Delta _{0}(A)$ is closed in $\Delta (A)$;

\item $E_{\Delta (A)}$ is smooth;

\item $E_{\Delta (A)}$ is classifiable by countable structures;

\item $A$ has continuous trace.
\end{enumerate}
\end{theorem}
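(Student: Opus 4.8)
The plan is to treat $E_{\Delta (A)}$ as the orbit equivalence relation of the translation action of the Polish group $(\Delta _{0}(A),\left\Vert \cdot \right\Vert _{\Delta _{0}(A)})$ on the Polish space $\overline{\Delta _{0}(A)}$, and to mirror the structure of Theorem \ref{Theorem: unital}, importing the purely C*-algebraic input from \cite{Akemann-Pedersen} and \cite{Elliott3} to tie everything to continuous trace. The implications $(1)\Rightarrow (2)\Rightarrow (3)$ are the easy part. If $\Delta _{0}(A)$ is closed then $\overline{\Delta _{0}(A)}=\Delta _{0}(A)$, so $E_{\Delta (A)}$ has a single class and is trivially smooth; and smooth relations are \emph{a fortiori} classifiable by countable structures. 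In fact $(1)\Leftrightarrow (2)$ follows uniformly from the standard principle that a coset equivalence relation of a translation action is smooth exactly when the acting subgroup has closed image (Exercise 6.4.4 of \cite{Gao}), noting that for subsets of the closed space $\overline{\Delta _{0}(A)}$ being closed there is the same as being closed in $\Delta (A)$.

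For $(3)\Rightarrow (1)$ I would argue the contrapositive via turbulence, exactly as the relation $E_{(0,1)^{\mathbb{N}}}^{\ell ^{1}}$ is handled in Section \ref{Section: criteria}. If $\Delta _{0}(A)$ is not closed, its image is a \emph{proper dense} subgroup of $\overline{\Delta _{0}(A)}$; since the inclusion $(\Delta _{0}(A),\left\Vert \cdot \right\Vert _{\Delta _{0}(A)})\hookrightarrow \overline{\Delta _{0}(A)}$ is a continuous injection of a separable Banach space with dense proper image, Pettis' theorem forces the image to be meager, and the appropriate form of Proposition 3.25 of \cite{Hjorth-book} shows the translation action is turbulent. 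Hence $E_{\Delta (A)}$ is generically $\simeq _{\mathcal{C}}$-ergodic with meager classes for every class $\mathcal{C}$ of countable structures, and is therefore not classifiable by countable structures. Combined with the previous paragraph this gives $(1)\Leftrightarrow (2)\Leftrightarrow (3)$.

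The genuine content is the equivalence $(1)\Leftrightarrow (4)$. For $(4)\Rightarrow (1)$ I would invoke the rigidity of continuous-trace algebras: by Theorem 2.4 of \cite{Akemann-Pedersen} continuous trace is equivalent to hypercentrality of all central sequences, and this rigidity is what I expect to force $\Delta _{0}(A)$ to be norm closed in $\Delta (A)$. For the converse I would run the dichotomy of Sections \ref{Section: outer}--\ref{Section: inner}: if $A$ fails to have continuous trace then either it admits an outer derivation, in which case the construction underlying Property AEP (Definition \ref{Definition: property AEP}) yields inner derivations $\mathrm{ad}(i\sum _{k\leq n}t_{k}x_{k})$ that converge in operator norm to a non-inner derivation, witnessing $\Delta _{0}(A)\subsetneq \overline{\Delta _{0}(A)}$; or it has only inner derivations, where Proposition \ref{Proposition: existence hypercentral} supplies a central non-hypercentral sequence from which one again manufactures an approximately inner derivation that is not inner.

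The step I expect to be the main obstacle is precisely $(1)\Rightarrow (4)$, equivalently $\neg (4)\Rightarrow \neg (1)$ in the \emph{only inner derivations} regime. Two difficulties arise. First, an outer derivation need not be approximately inner in operator norm, so to place the AEP-derivations inside $\overline{\Delta _{0}(A)}$ one must leverage the sharp estimates $\left\Vert [x_{n},a_{i}]\right\Vert <2^{-n}$ to control the operator norm of the tails $\mathrm{ad}(i\sum _{k>n}t_{k}x_{k})$, not merely their action on a dense sequence. Second, and more seriously, when $A$ has only inner derivations Sakai's theorem makes $\Delta _{0}(A)=\Delta (A)$ on a simple nonelementary summand, so $\Delta _{0}(A)$ threatens to be closed even though continuous trace fails; reconciling this is the crux, and I anticipate it rests on the full strength of Elliott's characterization in \cite{Elliott3} together with the identification (Theorem 3.9 of \cite{Akemann-Pedersen}) of the only-inner-derivation, non-continuous-trace algebras as exactly those carrying a simple nonelementary direct summand.
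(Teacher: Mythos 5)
Your descriptive-set-theoretic skeleton coincides with the paper's. The implications $(1)\Rightarrow(2)\Rightarrow(3)$ are handled the same way, and for $(3)\Rightarrow(1)$ the paper argues exactly as you do: the inclusion of the separable Banach space $(\Delta _{0}(A),\left\Vert \cdot \right\Vert _{\Delta _{0}(A)})$ into $\overline{\Delta _{0}(A)}$ is a bounded injection with dense image, so non-closedness makes the translation action turbulent and Hjorth's theorem finishes. The only difference is the citation: the paper invokes Lemma 2.1 of \cite{Sas-To:Araki}, which is precisely the Banach-space generalization of Proposition 3.25 of \cite{Hjorth-book} that your ``appropriate form'' would have to be; as literally stated, Hjorth's proposition concerns $\ell ^{p}$ acting on $\mathbb{R}^{\mathbb{N}}$, so you should cite or prove the general version rather than gesture at it.

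The genuine gap is the equivalence $(1)\Leftrightarrow(4)$, and it sits exactly where you write ``I expect'' and ``I anticipate.'' The paper does not rerun the AEP or central-sequence machinery here at all: it obtains $(1)\Leftrightarrow(4)$ in one line by citing Theorem 5.3 of \cite{Kadison-Lance-Ringrose} together with the already-established equivalence of items 1 and 3 of Theorem \ref{Theorem: unital} (concrete classifiability of automorphisms, equivalently closedness of $\mathrm{Inn}(A)$, versus continuous trace). The two obstacles you raise against your own route are real and are not overcome in your sketch. First, the inner derivations $\mathrm{ad}(i\sum_{k\leq n}t_{k}x_{k})$ coming from Property AEP are controlled only on a dense sequence $(a_{i})$; nothing forces them to be Cauchy in the operator norm of $B(A)$, so the outer derivation they suggest need not lie in $\overline{\Delta _{0}(A)}$, and $\Delta _{0}(A)\subsetneq \overline{\Delta _{0}(A)}$ is not witnessed. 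Second, in the only-inner-derivations regime Sakai's theorem gives $\Delta _{0}(A)=\Delta (A)$ on a simple unital summand, so there is no approximately inner, non-inner derivation to manufacture there, and the contrapositive of $(1)\Rightarrow(4)$ cannot be reached through derivations alone; it has to be routed through the automorphism-level result (Theorem \ref{Theorem: unital}) via \cite{Kadison-Lance-Ringrose}, which is what the paper does. Your $(4)\Rightarrow(1)$ is likewise asserted rather than proved. As written, the proposal establishes $(1)\Leftrightarrow(2)\Leftrightarrow(3)$ but leaves the equivalence with $(4)$ open.
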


The equivalence of $1$ and $4$ follows from Theorem 5.3 of \cite%
{Kadison-Lance-Ringrose} together with the equivalence of $1$ and $3$ in
Theorem \ref{Theorem: unital}. The implication $1\Rightarrow 2$ follows from
Exercise 4.4 of \cite{Gao}. Trivially $2$ is stronger than $3$. Finally
observe that $\Delta _{0}(A)$ and $\overline{\Delta _{0}(A)}$ satisfy the
hypothesis of Lemma 2.1 of \cite{Sas-To:Araki}. In fact, as discussed at the
beginning of Section \ref{Section: inner}, $\Delta _{0}(A)$ endowed with the
norm%
\begin{equation*}
\left\Vert \mathrm{ad}\left( ia\right) \right\Vert _{\Delta (A)}=\inf
\left\{ \left\Vert a-z\right\Vert \mid z\in A^{\prime }\cap A\right\}
\end{equation*}%
is a separable Banach space. Moreover the inclusion map of $\Delta _{0}(A)$
in $\overline{\Delta _{0}(A)}\subset \Delta (A)$ is bounded with norm at
most $2$. Thus, if $\Delta _{0}(A)$ is not closed in $\Delta (A)$, then the
continuous action of the additive group $\Delta _{0}(A)$ on $\overline{%
\Delta _{0}(A)}$ by translation is turbulent. Hjorth's turbulence theorem
recalled at the beginning of Section \ref{Section: criteria} concludes the
proof of the implication $3\Rightarrow 1$.

\section{Questions}

\label{Section: open problems}

As pointed out in Section \ref{Section: Introduction}, the implication $%
3\Rightarrow 1$ of Theorem \ref{Theorem: main} does not hold in general.
Remark 0.9 of \cite{Raeburn-Rosenberg} provides an example of a C*-algebra $%
A $ that has continuous trace such that the group $\mathrm{Inn}(A)$ of inner
automorphisms of $A$ is not closed inside $\mathrm{Aut}(A)$. This implies
that the automorphisms of $A$ are not concretely classifiable up to unitary
equivalence. It would be interesting to know if the automorphisms of $A$ are
at least classifiable by countable structures up to unitary equivalence.
More generally we would like to suggest the following question:

\begin{question}
\label{Question: continuous trace}Is there a C*-algebra $A$ such that the
automorphisms of $A$ are classifiable by countable structures but not
concretely classifiable?
\end{question}

By Theorem \ref{Theorem: main} and the discussion preceding Theorem \ref%
{Theorem: unital}, such C*-algebra would necessarily have continuous trace
and spectrum not homotopically equivalent to a compact space. It is clear
that Question \ref{Question: continuous trace} has negative answer if and
only the dichotomy expressed by the equivalence of $1$ and $2$ in Theorem %
\ref{Theorem: unital} holds for any (not necessarily unital) C*-algebra.

It would also be interesting to study the Borel complexity of the
equivalence relation of conjugacy inside the automorphism group $\mathrm{Aut}%
(A)$ of a C*-algebra $A$. Recall that two automorphisms $\alpha ,\beta $ of $%
A$ are \textit{conjugate} if there is a third automorphism $\gamma $ of $A$
such that $\alpha =\gamma \circ \beta \circ \gamma ^{-1}$. Observe that this
is the orbit equivalence relation associated with the action of $\mathrm{Aut}%
(A)$ on itself by conjugation.

It is worth noting that a dichotomy result as in Theorem \ref{Theorem:
unital} does not hold for the equivalence relation of conjugacy even for
unital commutative C*-algebras. If $X$ is a compact metrizable space, denote
by $C(X)$ the unital commutative C*-algebra of complex-valued continuous
functions on $X$ (a classic result of Gelfand and Naimark asserts that any
unital commutative C*-algebra is of this form, see Theorem II.2.2.4 of \cite{Blackadar}).
Observe that by II.2.2.5 of \cite{Blackadar} the group $%
\mathrm{Aut}(C(X))$ of automorphisms of $C(X)$ is isomorphic as a Polish
group to the group $\mathrm{\mathrm{\mathrm{Hom}}eo}(X)$ of homeomorphisms
of $X$ endowed with the topology of pointwise convergence. Theorem 4.9 and
Corollary 4.11 of \cite{Hjorth-book} assert that the equivalence relation of
conjugacy inside $\mathrm{\mathrm{\mathrm{Hom}}eo}([0,1])$ is Borel complete
(see Definition 13.1.1 \cite{Gao}); in particular it is classifiable by
countable structures, but it is not smooth and not Borel. As a consequence
the same is true for the equivalence relation of conjugacy inside the
automorphism group of $C([0,1])$. An analogous result holds for the
automorphism group of $C(2^{\mathbb{N}})$ by Theorem 5 of \cite{Camerlo-Gao}%
. On the other hand the equivalence relation of conjugacy inside the
automorphism group of $C(\left[ 0,1\right] ^{2})$ is not classifiable by
countable structures by Theorem 4.17 of \cite{Hjorth-book}.

\subsection*{Acknowledgments}
We are extremely grateful to Ilijas Farah for his encouragement and support,
for many fundamental and insightful suggestions, and for his valuable
comments and feedback on a preliminary version of this paper. Moreover, we
would like to thank Nicolas Meffe for his grammatical and stylistic advice,
and Samuel Coskey, Kenneth Davidson, George Elliott, Thierry Giordano, Ilan
Hirshberg. David Kerr, Marcin Sabok, N. Christopher Phillips, Nicola Watson,
Stuart White, and Wilhelm Winter for several helpful conversations.

\end{document}